\documentclass{amsart}
\usepackage{graphicx}
\usepackage[all]{xy}
\usepackage{amsthm}
\usepackage{amssymb}
\usepackage{cite}

\theoremstyle{definition}
\newtheorem{1def}{Definition}[section]
\newtheorem{exa}[1def]{Example}

\theoremstyle{plain}
\newtheorem{thm}[1def]{Theorem}
\newtheorem{lem}[1def]{Lemma}
\newtheorem{pro}[1def]{Proposition}
\newtheorem{cor}[1def]{Corollary}

\theoremstyle{remark}
\newtheorem{rmk}[1def]{Remark}

\numberwithin{equation}{section}

\begin{document}
\title{Upper triangular matrices and Billiard Arrays}
\author{Yang Yang}
\address{Department of Mathematics, University of Wisconsin, Madison, WI 53706, USA}
\email{yyang@math.wisc.edu}

\begin{abstract}
Fix a nonnegative integer $d$, a field $\mathbb{F}$, and a vector space $V$ over $\mathbb{F}$ with dimension $d+1$. Let $T$ denote an invertible upper triangular matrix in ${\rm Mat}_{d+1}(\mathbb{F})$. Using $T$ we construct three flags on $V$. We find a necessary and sufficient condition on $T$ for these three flags to be totally opposite. In this case, we use these three totally opposite flags to construct a Billiard Array $B$ on $V$. It is known that $B$ is determined up to isomorphism by a certain triangular array of scalar parameters called the $B$-values. We compute these $B$-values in terms of the entries of $T$. We describe the set of isomorphism classes of Billiard Arrays in terms of upper triangular matrices.

\bigskip
\noindent
{\bf Keywords}.
Upper triangular matrix, Billiard Array, flag, Quantum group, Equitable presentation.
\hfil\break
\noindent {\bf 2010 Mathematics Subject Classification}.
Primary: 05E15. Secondary: 15A21.

\end{abstract}

\maketitle

\section{Introduction}
This paper is about a connection between upper triangular matrices and Billiard Arrays. The Billiard Array concept was introduced in \cite{paul}. This concept is closely related to the equitable presentation of $U_q(\mathfrak{sl}_2)$\cite{equit, paul}. For more information about the equitable presentation, see \cite{alnajjar, neubauer, huang, irt, uqsl2hat, nonnil, qtet, tersym, uawe,fduq,boyd}.

We now summarize our results. Fix a nonnegative integer $d$, a field $\mathbb{F}$, and a vector space $V$ over $\mathbb{F}$ with dimension $d+1$. Let $T$ denote an invertible upper triangular matrix in ${\rm Mat}_{d+1}(\mathbb{F})$. View $T$ as the transition matrix from a basis $\{u_i\}_{i=0}^{d}$ of $V$ to a basis $\{v_i\}_{i=0}^{d}$ of $V$. Using $T$ we construct three flags $\{U_{i}\}_{i=0}^{d}$, $\{U_{i}^{\prime}\}_{i=0}^{d}$, $\{U_{i}^{\prime\prime}\}_{i=0}^{d}$ on $V$ as follows. For $0\le i\le d$,
\begin{equation*}
U_{i}=\mathbb{F}u_0+\mathbb{F}u_1+\cdots +\mathbb{F}u_i=\mathbb{F}v_0+\mathbb{F}v_1+\cdots +\mathbb{F}v_i;
\end{equation*}
\begin{equation*}
U_{i}^{\prime}=\mathbb{F}u_d+\mathbb{F}u_{d-1}+\cdots +\mathbb{F}u_{d-i};
\end{equation*}
\begin{equation*}
U_{i}^{\prime\prime}=\mathbb{F}v_d+\mathbb{F}v_{d-1}+\cdots +\mathbb{F}v_{d-i}.
\end{equation*}

In our first main result, we find a necessary and sufficient condition (called very good) on $T$ for $\{U_{i}\}_{i=0}^{d}$, $\{U_{i}^{\prime}\}_{i=0}^{d}$, $\{U_{i}^{\prime\prime}\}_{i=0}^{d}$ to be totally opposite in the sense of \cite[Definition~12.1]{paul}.

In \cite[Theorem~12.7]{paul} it is shown how three totally opposite flags on $V$ correspond to a Billiard Array on $V$. Assume that the three flags $\{U_{i}\}_{i=0}^{d}$, $\{U_{i}^{\prime}\}_{i=0}^{d}$, $\{U_{i}^{\prime\prime}\}_{i=0}^{d}$ are totally opposite, and let $B$ denote the corresponding Billiard Array on $V$. By \cite[Lemma~19.1]{paul} $B$ is determined up to isomorphism by a certain triangular array of scalar parameters called the $B$-values. In our second main result, we compute these $B$-values in terms of the entries of $T$.

Let $\mathcal{T}_d(\mathbb{F})$ denote the set of very good upper triangular matrices in ${\rm Mat}_{d+1}(\mathbb{F})$. Define an equivalence relation $\sim$ on $\mathcal{T}_d(\mathbb{F})$ as follows. For $T, T^\prime\in \mathcal{T}_d(\mathbb{F})$, we declare $T\sim T^\prime$ whenever there exist invertible diagonal matrices $H, K \in {\rm Mat}_{d+1}(\mathbb{F})$ such that $T^{\prime}=HTK$. In our third main result, we display a bijection between the following two sets:
\begin{enumerate}
\item[\rm(i)] the equivalence classes for $\sim$ on $\mathcal{T}_d(\mathbb{F})$;
\item[\rm(ii)] the isomorphism classes of Billiard Arrays on $V$.
\end{enumerate}
We give a commutative diagram that illustrates our second and third main result. At the end of this paper, we give an example. In this example, we display a very good upper triangular matrix with entries given by $q$-binomial coefficients. We show that for the corresponding Billiard Array $B$, all the $B$-values are equal to $q^{-1}$.

The paper is organized as follows. Section 2 contains some preliminaries. Section 3 contains necessary facts about decompositions and flags. Section 4 is devoted to the correspondence between very good upper triangular matrices and totally opposite flags. This section contains our first main result. Section 5 contains necessary facts about Billiard Arrays. In Sections 6--8 we obtain our second and third main results. In Section 9, we display an example to illustrate our theory.

\section{Preliminaries}
Throughout the paper, we fix the following notation. Let $\mathbb{R}$ denote the field of real numbers. Recall the ring of integers $\mathbb{Z}=\{0,\pm 1,\pm 2,\dots\}$ and the set of natural numbers $\mathbb{N}=\{0,1,2,\dots\}$. Fix $d\in \mathbb{N}$. Let $\{x_i\}_{i=0}^d$ denote a sequence. We call $x_i$ the {\it $i$-component} of the sequence. By the {\it inversion} of the sequence $\{x_i\}_{i=0}^d$ we mean the sequence $\{x_{d-i}\}_{i=0}^d$. Let $\mathbb{F}$ denote a field. Let $V$ denote a vector space over $\mathbb{F}$ with dimension $d+1$. Let ${\rm Mat}_{d+1}(\mathbb{F})$ denote the $\mathbb{F}$-algebra consisting of the $d+1$ by $d+1$ matrices that have all entries in $\mathbb{F}$. We index the rows and columns by $0,1,\dots ,d$. Let $I$ denote the identity matrix in ${\rm Mat}_{d+1}(\mathbb{F})$.

\section{Decompositions and Flags}
In this section, we review some basic facts about decompositions and flags.
\begin{1def}
By a {\it decomposition} of $V$ we mean a sequence $\{V_i\}_{i=0}^d$ consisting of one-dimensional subspaces of $V$ such that $V=\sum_{i=0}^{d}V_i$ (direct sum).
\end{1def}
\begin{rmk}
For a decomposition of $V$, its inversion is a decomposition of $V$.
\end{rmk}
\begin{exa}
Choose a basis $\{f_i\}_{i=0}^{d}$ of $V$. For $0\le i\le d$, define $V_i=\mathbb{F}f_i$. Then $\{V_i\}_{i=0}^d$ is a decomposition of $V$.
\end{exa}
\begin{1def}
Referring to Example 3.3, we say that the decomposition $\{V_i\}_{i=0}^d$ is {\it induced} by the basis $\{f_i\}_{i=0}^{d}$.
\end{1def}
\begin{1def}
By a {\it flag} on $V$, we mean a sequence $\{W_{i}\}_{i=0}^{d}$ of subspaces of $V$ such that $W_{i}$ has dimension $i+1$ for $0\le i \le d$ and $W_{i-1}\subseteq W_{i}$ for $1\le i \le d$.
\end{1def}
\begin{exa}
Let $\{V_i\}_{i=0}^d$ denote a decomposition of $V$. For $0\le i\le d$, define $W_i=V_0+V_1+\dots+V_i$. Then $\{W_{i}\}_{i=0}^{d}$ is a flag on $V$.
\end{exa}
\begin{1def}
Referring to Example 3.6, we say that the flag $\{W_{i}\}_{i=0}^{d}$ is {\it induced} by the decomposition $\{V_i\}_{i=0}^d$.
\end{1def}

\begin{1def}
Consider a basis of $V$. That basis induces a decomposition of $V$, which in turn induces a flag on $V$. We say that flag is {\it induced} by the given basis.
\end{1def}

\begin{lem}
\cite[Section~6]{paul} Suppose that we are given two flags on $V$, denoted by $\{W_{i}\}_{i=0}^{d}$ and $\{W_{i}^{\prime}\}_{i=0}^{d}$. Then the following are equivalent:
\begin{enumerate}
\item[(\rm i)] $W_i\cap W_{j}^{\prime}=0$ for $i+j <d \ (0\le i,j \le d)$;
\item[(\rm ii)] there exists a decomposition $\{V_i\}_{i=0}^d$ of $V$ that induces $\{W_{i}\}_{i=0}^{d}$ and whose inversion induces $\{W_{i}^{\prime}\}_{i=0}^{d}$.
\end{enumerate}
Moreover, suppose {\rm (i), (ii)} hold. Then $V_i=W_{i}\cap W^\prime_{d-i}$ for $0\le i\le d$.
\end{lem}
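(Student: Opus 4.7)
The plan is to prove the two directions of the equivalence separately, and then deduce the ``moreover'' clause by inspection of the construction.

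For (ii) $\Rightarrow$ (i), I would argue directly. Assume $\{V_i\}_{i=0}^d$ is a decomposition with $W_i = V_0 + V_1 + \cdots + V_i$ and $W'_j = V_d + V_{d-1} + \cdots + V_{d-j}$. When $i + j < d$, the index sets $\{0,1,\ldots,i\}$ and $\{d-j, d-j+1, \ldots, d\}$ are disjoint, so the directness of $V = V_0 \oplus \cdots \oplus V_d$ forces $W_i \cap W'_j = 0$.

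For the harder direction (i) $\Rightarrow$ (ii), my plan is to define
\[
V_i := W_i \cap W'_{d-i} \qquad (0 \le i \le d)
\]
(this choice is forced by the ``moreover'' clause) and show that $\{V_i\}_{i=0}^d$ is the desired decomposition. A first dimension count in $V$ gives
\[
\dim(W_i \cap W'_{d-i}) \ge (i+1) + (d-i+1) - (d+1) = 1,
\]
so each $V_i$ is at least one-dimensional. The main step is then an induction on $i$ establishing simultaneously that $\dim V_i = 1$ and that $V_0 + V_1 + \cdots + V_i = W_i$ with the sum direct. The base case $i = 0$ reads $V_0 = W_0 \cap W'_d = W_0$, which is one-dimensional. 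In the inductive step, the key observation is that $V_i \cap W_{i-1} \subseteq W'_{d-i} \cap W_{i-1}$, and since $(i-1) + (d-i) = d-1 < d$, hypothesis (i) forces this intersection to vanish. Combined with $W_{i-1} + V_i \subseteq W_i$ and a dimension count, this pins down $\dim V_i = 1$ and $W_{i-1} \oplus V_i = W_i$. Taking $i = d$ yields a decomposition inducing $\{W_i\}_{i=0}^d$.

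To finish, I need to verify that the inversion $\{V_{d-i}\}_{i=0}^d$ induces $\{W'_i\}_{i=0}^d$. Here I would invoke symmetry: condition (i) is symmetric in the two flags, so re-running the construction with the roles of $\{W_i\}$ and $\{W'_i\}$ swapped produces a decomposition whose $i$-th component is $W'_i \cap W_{d-i} = V_{d-i}$, and this decomposition induces $\{W'_i\}_{i=0}^d$. The ``moreover'' clause is then immediate from the definition of $V_i$. I expect the only real obstacle to be the index bookkeeping inside the induction when applying hypothesis (i); no genuinely difficult ingredient is needed.
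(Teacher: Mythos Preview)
Your argument is correct and is the standard proof of this fact. Note, however, that the paper does not supply its own proof here: Lemma~3.9 is quoted verbatim from \cite[Section~6]{paul} and stated without proof, so there is nothing in the present paper to compare against.

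One small remark on the ``moreover'' clause: as written, you only verify $V_i = W_i \cap W'_{d-i}$ for the particular decomposition you constructed, whereas the statement asserts it for \emph{any} decomposition satisfying (ii). The missing line is short: if $\{V_i\}$ is any decomposition with $W_i = V_0 + \cdots + V_i$ and $W'_{d-i} = V_d + \cdots + V_i$, then $V_i \subseteq W_i \cap W'_{d-i}$; since your dimension argument already shows $\dim(W_i \cap W'_{d-i}) = 1$, equality follows. You may want to make this explicit.
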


\begin{1def}
Referring to Lemma 3.9, the flags $\{W_{i}\}_{i=0}^{d}$ and $\{W_{i}^{\prime}\}_{i=0}^{d}$ are called {\it opposite} whenever (i), (ii) hold.
\end{1def}
We mention a variation on Lemma 3.9.
\begin{lem}
\cite[Section~6]{paul} Suppose that we are given two flags on $V$, denoted by $\{W_{i}\}_{i=0}^{d}$ and $\{W_{i}^{\prime}\}_{i=0}^{d}$ . Then they are opposite if and only if $W_i\cap W_{j}^{\prime}=0$ for $i+j =d-1 \ (0\le i,j \le d-1)$.
\end{lem}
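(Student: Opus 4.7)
The plan is to deduce Lemma~3.11 from Lemma~3.9 by showing that the apparently weaker diagonal condition ($i+j=d-1$, with $0\le i,j\le d-1$) already implies the full triangular condition ($i+j<d$, with $0\le i,j\le d$) of Lemma~3.9(i). One direction is immediate: if $\{W_i\}_{i=0}^d$ and $\{W_i'\}_{i=0}^d$ are opposite, then by Lemma~3.9(i) we have $W_i\cap W_j'=0$ whenever $i+j<d$, so in particular whenever $i+j=d-1$.

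For the converse I would use the monotonicity built into the definition of a flag, namely $W_a\subseteq W_b$ and $W_a'\subseteq W_b'$ whenever $a\le b$. Suppose the hypothesis holds, and fix $i,j$ with $0\le i,j\le d$ and $i+j<d$. Since $i+j\le d-1$, both $i$ and $j$ lie in $[0,d-1]$. Set $j'=d-1-i$; then $j'\ge j$ and $j'\in[0,d-1]$, so $W_j'\subseteq W_{j'}'$ and consequently
\begin{equation*}
W_i\cap W_j'\subseteq W_i\cap W_{j'}'=0,
\end{equation*}
where the last equality is the hypothesis applied to the pair $(i,j')$ with $i+j'=d-1$. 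This verifies condition~(i) of Lemma~3.9, and that lemma then yields that $\{W_i\}_{i=0}^d$ and $\{W_i'\}_{i=0}^d$ are opposite.

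There is no real obstacle here; the only thing to be careful about is the bookkeeping of index ranges, specifically confirming that the choice $j'=d-1-i$ always satisfies $j'\ge j$ and $j'\le d-1$ under the assumption $i+j<d$, so that the hypothesis of Lemma~3.11 is genuinely applicable to the pair $(i,j')$.
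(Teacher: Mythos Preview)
Your argument is correct. The forward direction is immediate from Lemma~3.9(i), and for the converse you correctly exploit the nesting $W_j'\subseteq W_{j'}'$ with $j'=d-1-i$ to upgrade the diagonal condition $i+j=d-1$ to the full triangular condition $i+j<d$; the index checks you flag (that $j'\ge j$ and $0\le j'\le d-1$) are all fine.

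As for comparison with the paper: there is nothing to compare. The paper does not prove this lemma at all; it merely cites it from \cite[Section~6]{paul}. Your proof is exactly the standard reduction one would expect and is entirely self-contained given Lemma~3.9 and Definition~3.10.
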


\begin{1def}
Suppose that we are given three flags on $V$, denoted by $\{W_{i}\}_{i=0}^{d}$, $\{W_{i}^{\prime}\}_{i=0}^{d}$, $\{W_{i}^{\prime\prime}\}_{i=0}^{d}$. These flags are said to be {\it totally opposite} whenever $W_{d-r}\cap W_{d-s}^{\prime}\cap W_{d-t}^{\prime\prime}=0$ for all $r,s,t$ $(0\le r,s,t\le d)$ such that $r+s+t>d$.
\end{1def}

\begin{lem}
\cite[Theorem~12.3]{paul} Suppose that we are given three flags on $V$, denoted by $\{W_{i}\}_{i=0}^{d}$, $\{W_{i}^{\prime}\}_{i=0}^{d}$, $\{W_{i}^{\prime\prime}\}_{i=0}^{d}$. Then the following are equivalent:
\begin{enumerate}
\item[\rm(i)] The flags $\{W_{i}\}_{i=0}^{d}$, $\{W_{i}^{\prime}\}_{i=0}^{d}$, $\{W_{i}^{\prime\prime}\}_{i=0}^{d}$ are totally opposite.
\item[\rm(ii)] For $0\le n \le d$, the sequences $\{W_{i}\}_{i=0}^{d-n}$, $\{W_{d-n}\cap W_{n+i}^{\prime}\}_{i=0}^{d-n}$, $\{W_{d-n}\cap W_{n+i}^{\prime\prime}\}_{i=0}^{d-n}$ are mutually opposite flags on $W_{d-n}$.
\item[\rm(iii)]For $0\le n \le d$, the sequences $\{W_{i}^{\prime}\}_{i=0}^{d-n}$, $\{W_{d-n}^{\prime}\cap W_{n+i}^{\prime\prime}\}_{i=0}^{d-n}$, $\{W_{d-n}^{\prime}\cap W_{n+i}\}_{i=0}^{d-n}$ are mutually opposite flags on $W_{d-n}^{\prime}$.
\item[\rm(iv)]For $0\le n \le d$, the sequences $\{W_{i}^{\prime\prime}\}_{i=0}^{d-n}$, $\{W_{d-n}^{\prime\prime}\cap W_{n+i}\}_{i=0}^{d-n}$, $\{W_{d-n}^{\prime\prime}\cap W_{n+i}^{\prime}\}_{i=0}^{d-n}$ are mutually opposite flags on $W_{d-n}^{\prime\prime}$.
\end{enumerate}
\end{lem}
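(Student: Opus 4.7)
The plan is to exploit the evident permutation symmetry of the totally opposite condition (Definition 3.12) in the three flags: under a cyclic relabelling of $\{W_i\}$, $\{W_i'\}$, $\{W_i''\}$, statements (iii) and (iv) become (ii), so it suffices to prove the equivalence (i) $\Leftrightarrow$ (ii).

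The direction (ii) $\Rightarrow$ (i) is immediate. Set $n=0$, so that $W_{d-n}=V$ and the three sequences in (ii) collapse to $\{W_i\}_{i=0}^d$, $\{W_i'\}_{i=0}^d$, $\{W_i''\}_{i=0}^d$ themselves; their mutual (i.e.\ total) oppositeness as flags on $V$ is exactly (i).

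For (i) $\Rightarrow$ (ii), fix $n$ with $0\le n\le d$. I would first verify that the three candidate sequences are flags on $W_{d-n}$. Setting $t=0$ in (i) (so $W_{d-t}''=V$) yields the pairwise opposite condition $W_i\cap W_j'=0$ whenever $i+j<d$; Lemma 3.9 then supplies a decomposition $\{V_k\}_{k=0}^d$ of $V$ with $V_k=W_k\cap W_{d-k}'$, from which one computes $W_{d-n}\cap W_{n+i}'=V_{d-n-i}+V_{d-n-i+1}+\cdots+V_{d-n}$ for $0\le i\le d-n$, of dimension $i+1$; nestedness in $i$ is inherited from $\{W_i'\}$. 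The argument for $\{W_{d-n}\cap W_{n+i}''\}_{i=0}^{d-n}$ is identical with the roles of $'$ and $''$ swapped, and $\{W_i\}_{i=0}^{d-n}$ is trivially a flag on $W_{d-n}$. Next I would check that the three flags on $W_{d-n}$ are totally opposite: for indices $r',s',t'$ with $0\le r',s',t'\le d-n$ and $r'+s'+t'>d-n$, Definition 3.12 applied inside $W_{d-n}$ demands
\[
W_{(d-n)-r'}\cap\bigl(W_{d-n}\cap W_{d-s'}'\bigr)\cap\bigl(W_{d-n}\cap W_{d-t'}''\bigr)=0.
\]
Since $W_{(d-n)-r'}\subseteq W_{d-n}$, this collapses to $W_{(d-n)-r'}\cap W_{d-s'}'\cap W_{d-t'}''=0$, and the substitution $r=r'+n$ (so $d-r=(d-n)-r'$ and $r+s'+t'=r'+n+s'+t'>d$) puts us in the hypothesis of (i), which delivers precisely this vanishing.

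The main obstacle I foresee is the index bookkeeping in the reduction step and the careful justification that the pairwise opposite property extracted from (i) forces the claimed intersection dimensions; once the shifts $i\mapsto n+i$ and $r'\mapsto r'+n$ are in place, both verifications are direct applications of Lemma 3.9 and of the hypothesis in (i).
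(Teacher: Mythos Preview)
The paper does not supply its own proof of this lemma; it is quoted verbatim from \cite[Theorem~12.3]{paul}. So there is nothing in the present paper to compare your argument against, and the question is simply whether your sketch is correct.

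Your direction (i) $\Rightarrow$ (ii) is sound (and you in fact show the stronger conclusion that the three induced flags on $W_{d-n}$ are totally opposite, which certainly implies they are mutually, i.e.\ pairwise, opposite).

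Your direction (ii) $\Rightarrow$ (i), however, has a genuine gap. ``Mutually opposite'' in (ii) means \emph{pairwise} opposite in the sense of Definition~3.10, which is strictly weaker than ``totally opposite'' in Definition~3.12. Setting $n=0$ in (ii) only tells you that $\{W_i\}$, $\{W_i'\}$, $\{W_i''\}$ are pairwise opposite on $V$; that is not (i). You must use (ii) for \emph{every} $n$, not just $n=0$. Concretely, given $r,s,t$ with $0\le r,s,t\le d$ and $r+s+t>d$: if $s\le d-r$ and $t\le d-r$, take $n=r$ and use that the second and third induced flags on $W_{d-r}$ are opposite; writing $i=d-r-s$, $j=d-r-t$ one has $i+j<d-r$, hence
\[
\bigl(W_{d-r}\cap W'_{d-s}\bigr)\cap\bigl(W_{d-r}\cap W''_{d-t}\bigr)=W_{d-r}\cap W'_{d-s}\cap W''_{d-t}=0.
\]
If instead $s>d-r$ (so $r+s>d$) or $t>d-r$, the triple intersection already vanishes by the pairwise oppositeness obtained at $n=0$. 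This completes (ii) $\Rightarrow$ (i); the parenthetical ``(i.e.\ total)'' in your write-up is where the argument currently breaks.
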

For more information about flags, we refer the reader to \cite{sha} and \cite{har}.

\section{Upper triangular matrices and flags}
In this section, we explore the relation between upper triangular matrices and flags. First, we introduce some notation.
\begin{1def}
For a matrix $A \in {\rm Mat}_{d+1}(\mathbb{F})$, we define some submatrices of $A$ as follows. For $0\le i\le j \le d$, let $A[i,j]$ denote the submatrix $\{A_{kl}\}_{0\le k \le j-i, i\le l \le j}$ of $A$. Note that $A[0,d]=A$.
\end{1def}
\begin{1def}
For a matrix $A \in {\rm Mat}_{d+1}(\mathbb{F})$ and $0\le j\le d$, we call the submatrix $A[0,j]$ the {\it $j$-th leading principal submatrix} of $A$.
\end{1def}

\begin{1def}
For a matrix $A \in {\rm Mat}_{d+1}(\mathbb{F})$, we call it {\it good} whenever the submatrix $A[i,d]$ is invertible for $0 \le i \le d$.
\end{1def}
\begin{1def}
For a matrix $A \in {\rm Mat}_{d+1}(\mathbb{F})$, we call it {\it very good} whenever the submatrix $A[i,j]$ is invertible for $0 \le i \le j \le d$. 
\end{1def}

\begin{lem}
A matrix in ${\rm Mat}_{d+1}(\mathbb{F})$ is very good if and only if each of its leading principal submatrices is good.
\end{lem}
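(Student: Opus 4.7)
The plan is to unpack the two definitions and verify that they describe the same family of submatrices. The key observation is that for any $j$ with $0 \le j \le d$ and any $0 \le i \le j$, the submatrix $A[i,j]$ uses only entries $A_{kl}$ with $0 \le k \le j-i \le j$ and $i \le l \le j$, so it depends only on entries lying in the top-left $(j+1) \times (j+1)$ block of $A$. Consequently $A[i,j]$ coincides with the corresponding submatrix of the $j$-th leading principal submatrix $A[0,j]$, when we view $A[0,j]$ as an element of $\mathrm{Mat}_{j+1}(\mathbb{F})$ with rows and columns indexed by $0, 1, \ldots, j$.

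With this identification in hand the proof reduces to bookkeeping. First, fix $j$ and write $B = A[0,j]$. Applying Definition 4.3 to $B$, with the integer $j$ playing the role of $d$, goodness of $B$ reads: $B[i,j]$ is invertible for every $0 \le i \le j$. Since $B[i,j] = A[i,j]$ by the observation above, $B$ is good precisely when $A[i,j]$ is invertible for every $0 \le i \le j$.

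Quantifying over $0 \le j \le d$ then gives: every leading principal submatrix of $A$ is good if and only if $A[i,j]$ is invertible for all pairs $(i,j)$ with $0 \le i \le j \le d$. By Definition 4.4 the latter is exactly the very good condition on $A$, which completes the equivalence. The only step that requires any real care is the index-tracking used to identify $B[i,j]$ with $A[i,j]$; once this is checked against the formula $A[i,j] = \{A_{kl}\}_{0 \le k \le j-i,\, i \le l \le j}$, no further obstacle remains.
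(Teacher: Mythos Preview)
Your proof is correct and follows the same approach as the paper, which simply writes ``By Definitions 4.1--4.4.'' You have merely spelled out explicitly the identification $B[i,j]=A[i,j]$ that the paper leaves to the reader.
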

\begin{proof}
By Definitions 4.1--4.4.
\end{proof}
Referring to Definition 4.1, we now consider the case in which $A$ is upper triangular.
\begin{lem}
For an upper triangular matrix $A \in {\rm Mat}_{d+1}(\mathbb{F})$, the submatrix $A[0,j]$ is upper triangular for $0\le j \le d$.
\end{lem}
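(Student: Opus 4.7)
The statement is essentially a bookkeeping consequence of the definition of $A[0,j]$ together with the hypothesis that $A$ is upper triangular, so my plan is to unpack the notation and verify the required vanishing of below-diagonal entries.

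First, I would recall Definition 4.1 in the special case $i=0$: the submatrix $A[0,j]$ has rows indexed by $k$ with $0\le k\le j$ and columns indexed by $l$ with $0\le l\le j$, and its $(k,l)$-entry is simply $A_{kl}$. Thus $A[0,j]$ is a $(j+1)\times(j+1)$ matrix whose entries are drawn from the top-left $(j+1)\times(j+1)$ block of $A$ with no re-indexing: the entry in row $k$, column $l$ of $A[0,j]$ coincides with the entry in row $k$, column $l$ of $A$.

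Next, I would translate the hypothesis: saying $A$ is upper triangular means $A_{kl}=0$ whenever $k>l$. Restricting to the index range $0\le k,l\le j$, this gives $(A[0,j])_{kl}=A_{kl}=0$ for all pairs with $0\le l<k\le j$, which is exactly the statement that $A[0,j]$ is upper triangular.

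There is no real obstacle here; the only point requiring care is making sure that the row indexing of $A[0,j]$ really does start at $0$ and matches the row indexing of $A$ (so that the ``below-diagonal'' positions of $A[0,j]$ correspond to below-diagonal positions of $A$). Once that alignment is noted, the conclusion is immediate, and the proof can be written in one or two sentences.
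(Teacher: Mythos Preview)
Your proof is correct and takes essentially the same approach as the paper, which simply says ``By Definition 4.1.'' You have just unpacked that citation explicitly: $A[0,j]$ is the top-left $(j+1)\times(j+1)$ block of $A$ with identical indexing, so the vanishing of below-diagonal entries is inherited directly.
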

\begin{proof}
By Definition 4.1.
\end{proof}
\begin{lem}
For an invertible upper triangular matrix $A \in {\rm Mat}_{d+1}(\mathbb{F})$, the submatrix $A[0,j]$ is upper triangular and invertible for $0\le j \le d$.
\end{lem}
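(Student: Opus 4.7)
The plan is to combine the preceding Lemma 4.6 with the standard fact that an upper triangular square matrix is invertible if and only if all of its diagonal entries are nonzero. The upper triangular conclusion is immediate from Lemma 4.6, so the only substantive content is invertibility.

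First, I would invoke Lemma 4.6 directly to conclude that $A[0,j]$ is upper triangular for each $0 \le j \le d$. Next, unpacking Definition 4.1, the submatrix $A[0,j]$ consists of the entries $A_{kl}$ with $0 \le k,l \le j$, so its diagonal entries are precisely $A_{00}, A_{11}, \ldots, A_{jj}$, namely the first $j+1$ diagonal entries of $A$.

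Now I would use that $A$ itself is upper triangular and invertible. For any upper triangular matrix in ${\rm Mat}_{d+1}(\mathbb{F})$, the determinant is the product of the diagonal entries, so invertibility of $A$ forces $A_{kk}\neq 0$ for all $0 \le k \le d$. In particular $A_{00}, A_{11}, \ldots, A_{jj}$ are all nonzero, and since $A[0,j]$ is upper triangular with exactly these diagonal entries, we obtain $\det A[0,j]=\prod_{k=0}^{j}A_{kk}\neq 0$, so $A[0,j]$ is invertible.

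There is no real obstacle here: the statement is essentially bookkeeping about submatrices combined with the determinant formula for triangular matrices. The only point requiring care is to verify from Definition 4.1 that $A[0,j]$ is indeed the top-left $(j+1)\times(j+1)$ block of $A$ and that its diagonal coincides with the first $j+1$ diagonal entries of $A$; once this is observed, the proof reduces to a one-line determinant computation.
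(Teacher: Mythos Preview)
Your proof is correct and takes essentially the same approach as the paper, which simply cites Definition 4.1; you have just spelled out the elementary details (the top-left block interpretation of $A[0,j]$ and the determinant formula for triangular matrices) that the paper leaves implicit.
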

\begin{proof}
By Definition 4.1.
\end{proof}
\par
Consider an invertible upper triangular matrix $T\in {\rm Mat}_{d+1}(\mathbb{F})$. View $T$ as the transition matrix from a basis $\{u_i\}_{i=0}^{d}$ of $V$ to a basis $\{v_i\}_{i=0}^{d}$ of $V$. Thus for $0\le j\le d$,
\begin{equation}
v_j=\sum_{i=0}^{d}T_{ij}u_{i}.
\end{equation}

For the moment, pick $x\in V$. Then there exist scalars $\{b_i(x)\}_{i=0}^d$ in $\mathbb{F}$ such that
\begin{equation}
x=\sum_{i=0}^db_i(x)u_i.
\end{equation}
Moreover, there exist scalars $\{c_i(x)\}_{i=0}^d$ in $\mathbb{F}$ such that
\begin{equation}
x=\sum_{i=0}^dc_i(x)v_i.
\end{equation}
By (4.1)--(4.3),
\begin{equation}
Tc=b,
\end{equation}
where $c=(c_0(x),c_1(x),\dots,c_d(x))^t$ and $b=(b_0(x),b_1(x),\dots,b_d(x))^t$.

We now use $T$ to construct three flags on $V$.
\begin{lem}
With the above notation, the following two flags on $V$ coincide:
\begin{enumerate}
\item[(\rm i)] the flag induced by $\{u_i\}_{i=0}^{d}$;
\item[(\rm ii)] the flag induced by $\{v_i\}_{i=0}^{d}$.
\end{enumerate}
\end{lem}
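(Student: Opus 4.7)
My plan is to show each containment by using the upper triangular structure of $T$ and a dimension count.

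First I would unpack what the two flags are. The flag in (i) has $i$-th member $U_i = \mathbb{F}u_0 + \mathbb{F}u_1 + \cdots + \mathbb{F}u_i$, and the flag in (ii) has $i$-th member $U_i' = \mathbb{F}v_0 + \mathbb{F}v_1 + \cdots + \mathbb{F}v_i$. I need to show $U_i = U_i'$ for $0 \le i \le d$.

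Next I would exploit that $T$ is upper triangular, so $T_{ij} = 0$ whenever $i > j$. Then equation (4.1) reduces to $v_j = \sum_{i=0}^{j} T_{ij} u_i$, which shows $v_j \in U_j \subseteq U_i$ for every $j \le i$. Summing over $j = 0, 1, \ldots, i$ gives $U_i' \subseteq U_i$.

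Finally I would finish with a dimension argument: since $\{v_j\}_{j=0}^d$ is a basis, the vectors $v_0, \ldots, v_i$ are linearly independent, so $\dim U_i' = i+1 = \dim U_i$. Combined with $U_i' \subseteq U_i$, this forces $U_i' = U_i$. There is no real obstacle here; the lemma is essentially a restatement of the fact that an invertible upper triangular change of basis preserves the standard flag, and the proof is a one-line combination of upper triangularity and dimension.
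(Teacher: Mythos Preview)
Your proof is correct and follows essentially the same approach as the paper, which simply says ``By (4.1) and since $T$ is upper triangular.'' You have merely made explicit the containment-plus-dimension argument that the paper leaves to the reader.
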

\begin{proof}
By (4.1) and since $T$ is upper triangular.
\end{proof}
\par
We now define three flags on $V$, denoted by $\{U_{i}\}_{i=0}^{d}$, $\{U_{i}^{\prime}\}_{i=0}^{d}$, $\{U_{i}^{\prime\prime}\}_{i=0}^{d}$. The flag $\{U_{i}\}_{i=0}^{d}$ is induced by the basis $\{u_i\}_{i=0}^{d}$ or $\{v_i\}_{i=0}^{d}$. The flag $\{U_{i}^{\prime}\}_{i=0}^{d}$ (resp. $\{U_{i}^{\prime\prime}\}_{i=0}^{d}$) is induced by the basis $\{u_{d-i}\}_{i=0}^{d}$ (resp. $\{v_{d-i}\}_{i=0}^{d}$). More explicitly, for $0\le i\le d$,
\begin{equation}
U_{i}=\mathbb{F}u_0+\mathbb{F}u_1+\cdots +\mathbb{F}u_i=\mathbb{F}v_0+\mathbb{F}v_1+\cdots +\mathbb{F}v_i;
\end{equation}
\begin{equation}
U_{i}^{\prime}=\mathbb{F}u_d+\mathbb{F}u_{d-1}+\cdots +\mathbb{F}u_{d-i};
\end{equation}
\begin{equation}
U_{i}^{\prime\prime}=\mathbb{F}v_d+\mathbb{F}v_{d-1}+\cdots +\mathbb{F}v_{d-i}.
\end{equation}
\par
By Lemma 3.9, the flag $\{U_{i}\}_{i=0}^{d}$ is opposite to the flags $\{U_{i}^{\prime}\}_{i=0}^{d}$ and $\{U_{i}^{\prime\prime}\}_{i=0}^{d}$. Our next goal is to give a necessary and sufficient condition for the flags $\{U_{i}^{\prime}\}_{i=0}^{d}$ and $\{U_{i}^{\prime\prime}\}_{i=0}^{d}$ to be opposite. We will use the following lemma.
\begin{lem}
With the above notation, for $0\le i\le d-1$, $U_i^{\prime}\cap U_{d-1-i}^{\prime\prime}=0$ if and only if $\det(T[i+1,d])\ne 0$.
\end{lem}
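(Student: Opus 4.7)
The plan is to describe the intersection $U_i' \cap U_{d-1-i}''$ as the solution set of a homogeneous linear system whose coefficient matrix is exactly $T[i+1,d]$, and then invoke invertibility.

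First I would unpack the definitions. By (4.6), $U_i'$ is spanned by $u_{d-i},u_{d-i+1},\dots,u_d$, so a vector $x\in V$ lies in $U_i'$ precisely when its $\{u_i\}$-coordinate vector $b=(b_0(x),\dots,b_d(x))^t$ from (4.2) satisfies $b_l=0$ for $0\le l\le d-i-1$. Replacing $i$ by $d-1-i$ in (4.7) gives $U_{d-1-i}''=\mathbb{F}v_d+\mathbb{F}v_{d-1}+\cdots+\mathbb{F}v_{i+1}$, so $x\in U_{d-1-i}''$ precisely when its $\{v_i\}$-coordinate vector $c=(c_0(x),\dots,c_d(x))^t$ from (4.3) satisfies $c_l=0$ for $0\le l\le i$.

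Next I would combine these observations with the identity $Tc=b$ from (4.4). For $x\in U_i'\cap U_{d-1-i}''$, the vanishing of $b_l$ for $0\le l\le d-i-1$ together with the vanishing of $c_k$ for $0\le k\le i$ forces
\begin{equation*}
\sum_{k=i+1}^{d}T_{lk}c_k=0\qquad(0\le l\le d-i-1).
\end{equation*}
Reading off Definition 4.1, the coefficient array $\{T_{lk}\}_{0\le l\le d-i-1,\;i+1\le k\le d}$ is exactly $T[i+1,d]$, and it is a $(d-i)\times(d-i)$ square matrix. Thus the map $x\mapsto(c_{i+1}(x),\dots,c_d(x))^t$ identifies $U_i'\cap U_{d-1-i}''$ with $\ker T[i+1,d]$; the map is clearly injective because the $v_k$ form a basis, and any element of the kernel arises in this way by setting $c_0=\cdots=c_i=0$ and using the corresponding $x=\sum_k c_k v_k$, which automatically lies in $U_i'$ since $b=Tc$ has its first $d-i$ coordinates equal to zero.

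With this identification in hand, the proof concludes at once: $U_i'\cap U_{d-1-i}''=0$ if and only if $\ker T[i+1,d]=0$, which for the square matrix $T[i+1,d]$ is equivalent to $\det(T[i+1,d])\ne 0$. I do not anticipate any real obstacle here; the only thing to be careful about is keeping the bookkeeping of indices consistent between the definition of $T[i+1,d]$ in Definition 4.1 and the truncated coordinate vectors, so that the square submatrix arising from the linear system matches the claimed $T[i+1,d]$ exactly.
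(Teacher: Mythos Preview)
Your proof is correct and follows essentially the same approach as the paper: both translate membership in $U_i'$ and $U_{d-1-i}''$ into vanishing conditions on the $b$- and $c$-coordinates, use $Tc=b$ to obtain the square system with coefficient matrix $T[i+1,d]$, and conclude by identifying the intersection with the kernel of that matrix. Your presentation packages the two directions into a single bijection with $\ker T[i+1,d]$, whereas the paper spells out each direction separately, but the argument is the same.
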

\begin{proof}
Consider $x\in V$. We refer to the notation around (4.2) and (4.3). We make two observations about $x$.
The first observation is that by (4.6), we have $x\in U_i^{\prime}$ if and only if $b_n(x)=0$ for $0\le n \le d-1-i$. The second observation is that by (4.7), we have $x\in U_{d-1-i}^{\prime\prime}$ if and only if $c_n(x)=0$ for $0\le n \le i$. In this case, by (4.4),
\begin{equation}
T[i+1,d](c_{i+1}(x),c_{i+2}(x),\dots ,c_d(x))^{t}=(b_{0}(x),b_{1}(x),\dots ,b_{d-1-i}(x))^{t}.
\end{equation}

First assume that $\det(T[i+1,d])\ne 0$. We will show that $U_i^{\prime}\cap U_{d-1-i}^{\prime\prime}=0$. To do this, we assume $x\in U_i^{\prime}\cap U_{d-1-i}^{\prime\prime}$, and show that $x=0$. By the first observation, $b_n(x)=0$ for $0\le n \le d-1-i$. By the second observation, $c_n(x)=0$ for $0\le n \le i$ and (4.8) holds. By these comments,
\begin{equation}
T[i+1,d](c_{i+1}(x),c_{i+2}(x),\dots ,c_d(x))^{t}=0.
\end{equation}
By (4.9) and $\det(T[i+1,d])\ne 0$, the vector $(c_{i+1}(x),c_{i+2}(x),\dots ,c_d(x))^{t}=0$. In other words, $c_n(x)=0$ for $i+1\le n \le d$. We have shown that $c_n(x)=0$ for $0\le n\le d$. Hence $x=0$. Therefore $U_i^{\prime}\cap U_{d-1-i}^{\prime\prime}=0$.

Next assume that $\det(T[i+1,d])= 0$. We will show that $U_i^{\prime}\cap U_{d-1-i}^{\prime\prime}\ne 0$. By the assumption and linear algebra, there exists a nonzero vector
\begin{equation*}
w=(w_{i+1},w_{i+2},\dots ,w_{d})\in \mathbb{F}^{d-i}
\end{equation*}
such that $T[i+1,d]w^{t}=0$. Choose the vector $x$ such that $c_n(x)=0$ for $0\le n\le i$ and $c_n(x)=w_{n}$ for $i+1\le n \le d$. Observe that $x\ne 0$ and satisfies (4.9). By the second observation, $x \in U_{d-1-i}^{\prime\prime}$ and (4.8) holds. By (4.8) and (4.9), $b_n(x)=0$ for $0\le n \le d-1-i$. By the first observation, $x\in U_i^{\prime}$. We have shown that $0 \ne x \in U_i^{\prime}\cap U_{d-1-i}^{\prime\prime}$. Therefore $U_i^{\prime}\cap U_{d-1-i}^{\prime\prime}\ne 0$.
\end{proof}

\begin{pro}
The flags $\{U_{i}^{\prime}\}_{i=0}^{d}$ and $\{U_{i}^{\prime\prime}\}_{i=0}^{d}$ are opposite if and only if $T$ is good in the sense of Definition 4.3.
\end{pro}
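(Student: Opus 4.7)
The plan is to deduce this proposition by combining the characterization of opposite flags in Lemma 3.11 with the explicit criterion in Lemma 4.9. No substantive new computation is required; the work is essentially bookkeeping.

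First I would invoke Lemma 3.11 applied to the flags $\{U_i'\}_{i=0}^d$ and $\{U_i''\}_{i=0}^d$. According to that lemma, these two flags are opposite if and only if $U_i' \cap U_j'' = 0$ whenever $i+j = d-1$ and $0 \le i, j \le d-1$. Equivalently, setting $j = d-1-i$, they are opposite if and only if
\begin{equation*}
U_i' \cap U_{d-1-i}'' = 0 \qquad (0 \le i \le d-1).
\end{equation*}

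Next I would apply Lemma 4.9, which asserts that for each such $i$, the condition $U_i' \cap U_{d-1-i}'' = 0$ is equivalent to $\det(T[i+1,d]) \ne 0$. Thus the two flags are opposite if and only if $T[i+1,d]$ is invertible for $0 \le i \le d-1$, i.e. $T[k,d]$ is invertible for $1 \le k \le d$.

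Finally I would note that $T[0,d] = T$ itself is invertible by hypothesis, so the previous condition is equivalent to $T[i,d]$ being invertible for all $0 \le i \le d$, which is the definition of $T$ being good (Definition 4.3). This proves the two directions simultaneously. The only thing to watch in writing this up is the re-indexing $i \leftrightarrow i+1$ between Lemma 3.11 and Lemma 4.9, and the inclusion of the endpoint $i=0$ which comes for free from the invertibility of $T$; I do not anticipate any genuine obstacle.
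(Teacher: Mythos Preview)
Your proposal is correct and follows essentially the same approach as the paper's proof: both combine Lemma 3.11 with Lemma 4.9, and both handle the boundary case $T[0,d]=T$ by appealing to the standing hypothesis that $T$ is invertible. The only difference is the order of presentation; the paper starts from the definition of good and works toward the flag condition, while you start from the flag condition, but the logical content is identical.
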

\begin{proof}
Recall from Definition 4.1 that $T[0,d]=T$. Since $T$ is invertible, we obtain $\det(T[0,d]) \ne 0$. Therefore, by Definition 4.3, $T$ is good if and only if $\det(T[i+1,d])\ne 0$ for $0 \le i \le d-1$. By Lemma 4.9, this happens if and only if $U_i^{\prime}\cap U_{d-1-i}^{\prime\prime}=0$ for $0\le i \le d-1$. The result follows in view of Lemma 3.11.
\end{proof}
\begin{cor}
$T$ is good if and only if $T^{-1}$ is good.
\end{cor}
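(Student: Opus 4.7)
The plan is to deduce this from Proposition 4.10 by exploiting the symmetry built into the setup.

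First I would observe that since $T$ is invertible and upper triangular, the inverse $T^{-1}$ is also invertible and upper triangular, so the hypotheses of Proposition 4.10 apply to $T^{-1}$ as well. Next I would interpret $T^{-1}$ inside the framework of Section~4: whereas $T$ is the transition matrix from the basis $\{u_i\}_{i=0}^{d}$ to the basis $\{v_i\}_{i=0}^{d}$, its inverse $T^{-1}$ is the transition matrix from $\{v_i\}_{i=0}^{d}$ to $\{u_i\}_{i=0}^{d}$. In other words, running the construction of Section~4 with $T^{-1}$ in place of $T$ amounts to interchanging the roles of the bases $\{u_i\}_{i=0}^{d}$ and $\{v_i\}_{i=0}^{d}$.

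Under this interchange, the flag $\{U_i\}_{i=0}^{d}$ of (4.5) is unaffected, since by Lemma~4.8 it is induced by either basis. However, the flag $\{U'_i\}_{i=0}^{d}$ of (4.6), which was defined using $\{u_{d-i}\}_{i=0}^{d}$, becomes the flag that would previously have been called $\{U''_i\}_{i=0}^{d}$, and vice versa. Thus the three flags attached to $T^{-1}$ are the same three flags attached to $T$, with the roles of $\{U'_i\}$ and $\{U''_i\}$ swapped.

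Now I would apply Proposition 4.10 twice. By that proposition, $T^{-1}$ is good if and only if the second and third flags attached to $T^{-1}$ are opposite, which by the previous paragraph is the condition that $\{U''_i\}_{i=0}^{d}$ and $\{U'_i\}_{i=0}^{d}$ are opposite. Since the relation of being opposite between two flags is symmetric (immediate from Definition~3.10 or Lemma~3.11), this is equivalent to $\{U'_i\}_{i=0}^{d}$ and $\{U''_i\}_{i=0}^{d}$ being opposite, which by Proposition 4.10 again is equivalent to $T$ being good. The main (and only) point requiring care is tracking the indexing carefully to confirm that the swap of bases really does interchange the two flags exactly as described; once that is verified, the corollary is immediate and no calculation is needed.
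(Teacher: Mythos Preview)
Your proof is correct and follows essentially the same approach as the paper: replacing $T$ by $T^{-1}$ interchanges the roles of the bases $\{u_i\}$ and $\{v_i\}$, hence swaps the flags $\{U'_i\}$ and $\{U''_i\}$ while leaving $\{U_i\}$ fixed, and then Proposition~4.10 together with the symmetry of the ``opposite'' relation yields the result. The paper's own proof says exactly this, only more tersely.
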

\begin{proof}
Going through the construction around (4.5)--(4.7) using $T$, we obtain a sequence of three flags $\{U_{i}\}_{i=0}^{d}$, $\{U_{i}^{\prime}\}_{i=0}^{d}$, $\{U_{i}^{\prime\prime}\}_{i=0}^{d}$. Repeating the construction with $T$ replaced by $T^{-1}$, we obtain a sequence of three flags $\{U_{i}\}_{i=0}^{d}$, $\{U_{i}^{\prime\prime}\}_{i=0}^{d}$, $\{U_{i}^{\prime}\}_{i=0}^{d}$. The result follows in view of Proposition 4.10.
\end{proof}
Next, we give a necessary and sufficient condition for the three flags $\{U_{i}\}_{i=0}^{d}$, $\{U_{i}^{\prime}\}_{i=0}^{d}$, $\{U_{i}^{\prime\prime}\}_{i=0}^{d}$ to be totally opposite in the sense of Definition 3.12.
\begin{pro}
The three flags $\{U_{i}\}_{i=0}^{d}$, $\{U_{i}^{\prime}\}_{i=0}^{d}$, $\{U_{i}^{\prime\prime}\}_{i=0}^{d}$ are totally opposite in the sense of Definition 3.12 if and only if $T$ is very good in the sense of Definition 4.4.
\end{pro}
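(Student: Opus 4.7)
The plan is to reduce the statement to an iterated application of Proposition~4.10 by means of Lemma~3.13(ii). First I would fix $n$ with $0\le n \le d$ and analyze the three flags on $U_{d-n}$ that appear in Lemma~3.13(ii), namely
\begin{equation*}
\{U_i\}_{i=0}^{d-n},\qquad \{U_{d-n}\cap U_{n+i}'\}_{i=0}^{d-n},\qquad \{U_{d-n}\cap U_{n+i}''\}_{i=0}^{d-n}.
\end{equation*}
By Lemma~4.8 the flag $\{U_i\}_{i=0}^{d-n}$ is the flag on $U_{d-n}$ induced by either of the bases $\{u_0,\dots,u_{d-n}\}$ or $\{v_0,\dots,v_{d-n}\}$ of $U_{d-n}$. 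From (4.6) one computes directly that $U_{d-n}\cap U_{n+i}' = \mathbb{F}u_{d-n-i}+\cdots+\mathbb{F}u_{d-n}$, so this is the flag on $U_{d-n}$ induced by the inversion of $\{u_0,\dots,u_{d-n}\}$.

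Next I would establish the analogous description for the third flag: $U_{d-n}\cap U_{n+i}'' = \mathbb{F}v_{d-n-i}+\cdots+\mathbb{F}v_{d-n}$. The inclusion $\supseteq$ is immediate from (4.5) and (4.7), and equality follows from the dimension count $\dim(U_{d-n}+U_{n+i}'')=d+1$ (since together these subspaces contain every $v_j$), which gives $\dim(U_{d-n}\cap U_{n+i}'')=i+1$. Thus the third flag is induced by the inversion of $\{v_0,\dots,v_{d-n}\}$. Observe that by (4.1) and upper-triangularity of $T$, the leading principal submatrix $T[0,d-n]$ is precisely the transition matrix from $\{u_0,\dots,u_{d-n}\}$ to $\{v_0,\dots,v_{d-n}\}$ as bases of $U_{d-n}$. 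Consequently the three flags above on $U_{d-n}$ are exactly the three flags arising from the construction of Section~4 applied to the matrix $T[0,d-n]$ on the space $U_{d-n}$.

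Now I would apply Proposition~4.10 in this reduced setting. The first of the three flags on $U_{d-n}$ is automatically opposite to each of the other two (by Lemma~3.9, since they are induced by a basis and its inversion), so mutual opposition of the three flags is equivalent to opposition of the second and third, which by Proposition~4.10 applied to $T[0,d-n]$ (invertible and upper triangular by Lemma~4.7) is equivalent to $T[0,d-n]$ being good. Combining over all $n$ via Lemma~3.13, the three flags $\{U_i\}$, $\{U_i'\}$, $\{U_i''\}$ are totally opposite if and only if every leading principal submatrix of $T$ is good, which by Lemma~4.5 is equivalent to $T$ being very good.

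The only step that demands genuine care is the identification $U_{d-n}\cap U_{n+i}'' = \mathbb{F}v_{d-n-i}+\cdots+\mathbb{F}v_{d-n}$, since the spanning set for $U_{n+i}''$ uses the $v$-basis while $U_{d-n}$ is most naturally described in the $u$-basis; I expect the dimension argument sketched above to settle it cleanly. Once that identification is in hand, the rest of the argument is bookkeeping on top of Lemma~3.13, Lemma~4.5, and Proposition~4.10.
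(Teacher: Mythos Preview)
Your proposal is correct and follows essentially the same route as the paper's proof: both reduce via Lemma~3.13(ii) to checking, for each $n$, that the three flags on $U_{d-n}$ induced by $\{u_i\}_{i=0}^{d-n}$, its inversion, and the inversion of $\{v_i\}_{i=0}^{d-n}$ are mutually opposite, then invoke Proposition~4.10 on the leading principal submatrix $T[0,d-n]$ and conclude via Lemma~4.5. The only cosmetic difference is that the paper obtains the identification $U_{d-n}\cap U_{n+i}'' = \mathbb{F}v_{d-n-i}+\cdots+\mathbb{F}v_{d-n}$ directly from the description $U_{d-n}=\mathbb{F}v_0+\cdots+\mathbb{F}v_{d-n}$ in (4.5), whereas you supply an (equally valid) dimension argument.
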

\begin{proof}
By parts (i), (ii) of Lemma 3.13 along with Definition 4.4 and Lemma 4.5, it suffices to show that for $0\le n\le d$, the sequences $\{U_{i}\}_{i=0}^{d-n}$, $\{U_{d-n}\cap U_{n+i}^{\prime}\}_{i=0}^{d-n}$, $\{U_{d-n}\cap U_{n+i}^{\prime\prime}\}_{i=0}^{d-n}$ are mutually opposite flags on $U_{d-n}$ if and only if $\det(T[i,d-n])\ne 0$ for $0\le i  \le d-n$. Let $n$ be given.
By (4.5)--(4.7) we find that for $0\le i \le d-n$,
\begin{equation}
U_{d-n}\cap U_{n+i}^{\prime}=\mathbb{F}u_{d-n-i}+\mathbb{F}u_{d-n-i+1}+\dots+\mathbb{F}u_{d-n};
\end{equation}
\begin{equation}
U_{d-n}\cap U_{n+i}^{\prime\prime}=\mathbb{F}v_{d-n-i}+\mathbb{F}v_{d-n-i+1}+\dots+\mathbb{F}v_{d-n}.
\end{equation}
By (4.5), (4.10) and Lemma 3.9, the flag $\{U_{i}\}_{i=0}^{d-n}$ on $U_{d-n}$ is opposite to the flag $\{U_{d-n}\cap U_{n+i}^{\prime}\}_{i=0}^{d-n}$ on $U_{d-n}$. Similarly, by (4.5), (4.11) and Lemma 3.9, the flag $\{U_{i}\}_{i=0}^{d-n}$ on $U_{d-n}$ is opposite to the flag  $\{U_{d-n}\cap U_{n+i}^{\prime\prime}\}_{i=0}^{d-n}$ on $U_{d-n}$.
By Lemma 4.7, the submatrix $T[0,d-n]$ is invertible and upper triangular. Therefore we can apply Proposition 4.10 to the two flags $\{U_{d-n}\cap U_{n+i}^{\prime}\}_{i=0}^{d-n}$ and $\{U_{d-n}\cap U_{n+i}^{\prime\prime}\}_{i=0}^{d-n}$ on $U_{d-n}$. By this, the two flags $\{U_{d-n}\cap U_{n+i}^{\prime}\}_{i=0}^{d-n}$ and $\{U_{d-n}\cap U_{n+i}^{\prime\prime}\}_{i=0}^{d-n}$ are opposite if and only if $\det(T[i,d-n])\ne 0$ for $0\le i\le d-n$. We have shown that the sequences $\{U_{i}\}_{i=0}^{d-n}$, $\{U_{d-n}\cap U_{n+i}^{\prime}\}_{i=0}^{d-n}$, $\{U_{d-n}\cap U_{n+i}^{\prime\prime}\}_{i=0}^{d-n}$ are mutually opposite flags on $U_{d-n}$ if and only if $\det(T[i,d-n])\ne 0$ for $0\le i  \le d-n$. The result follows.
\end{proof}
\begin{cor}
$T$ is very good if and only if $T^{-1}$ is very good.
\end{cor}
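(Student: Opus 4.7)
The plan is to mimic the strategy used to establish Corollary 4.11, but with Proposition 4.12 in place of Proposition 4.10. Running the construction around (4.5)--(4.7) using $T$ produces the three flags $\{U_i\}_{i=0}^d$, $\{U_i'\}_{i=0}^d$, $\{U_i''\}_{i=0}^d$, and by Proposition 4.12, $T$ is very good if and only if these three flags are totally opposite.

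Now I would repeat the construction with $T$ replaced by $T^{-1}$. Here $T^{-1}$ is the transition matrix from the basis $\{v_i\}_{i=0}^d$ to the basis $\{u_i\}_{i=0}^d$, so the roles of $u_i$ and $v_i$ are exchanged. Consequently the same $\{U_i\}_{i=0}^d$ appears as the first flag (by Lemma 4.8 applied to $T^{-1}$), while the last two flags are swapped: the construction yields the triple $\{U_i\}_{i=0}^d$, $\{U_i''\}_{i=0}^d$, $\{U_i'\}_{i=0}^d$. By Proposition 4.12 applied to $T^{-1}$, $T^{-1}$ is very good if and only if this second triple is totally opposite.

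Finally, I would invoke the symmetry of Definition 3.12: the defining condition $W_{d-r}\cap W_{d-s}^{\prime}\cap W_{d-t}^{\prime\prime}=0$ for all $r+s+t>d$ is symmetric in the three flags, since the inequality $r+s+t>d$ is symmetric in $r,s,t$. Hence permuting the three flags preserves the totally opposite condition, so the two conditions above are equivalent, and the corollary follows.

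There is no real obstacle here; the only point requiring care is confirming that passing from $T$ to $T^{-1}$ swaps $\{U_i'\}_{i=0}^d$ and $\{U_i''\}_{i=0}^d$ while fixing $\{U_i\}_{i=0}^d$, which is immediate once one writes $u_j=\sum_i (T^{-1})_{ij}v_i$ and applies Lemma 4.8 to $T^{-1}$.
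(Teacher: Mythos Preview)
Your proposal is correct and follows essentially the same approach as the paper: the paper's proof simply says ``Similar to the proof of Corollary 4.11,'' and your argument is precisely the analogue of that proof with Proposition 4.12 replacing Proposition 4.10, together with the observation that Definition 3.12 is symmetric in the three flags.
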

\begin{proof}
Similar to the proof of Corollary 4.11.
\end{proof}
\begin{1def}
Let $\mathcal{T}_d(\mathbb{F})$ denote the set of very good upper triangular matrices in ${\rm Mat}_{d+1}(\mathbb{F})$. Note that each element of $\mathcal{T}_d(\mathbb{F})$ is invertible.
\end{1def}

\section{Billiard Arrays}
In this section, we develop some results about Billiard Arrays that will be used later in the paper. We will refer to the following basis for the vector space $\mathbb{R}^3$:
\begin{equation*}
e_1=(1,0,0),\qquad e_2=(0,1,0),\qquad e_3=(0,0,1).
\end{equation*}
Define a subset $\Phi = \lbrace e_i - e_j | 1 \leq i,j\leq 3, i\not=j\rbrace$ of $\mathbb{R}^3$. The set $\Phi$ is often called the root system $A_2$. For notational convenience define
\begin{equation}
\alpha=e_1-e_2, \qquad \beta=e_2-e_3, \qquad \gamma=e_3-e_1.
\end{equation}
Note that
\begin{equation*}
\Phi=\{\pm\alpha,\pm\beta,\pm\gamma\}, \qquad \alpha+\beta+\gamma=0.
\end{equation*}

\begin{1def}
Let $\bigtriangleup_{d}$ denote the subset of $\mathbb{R}^3$ consisting of the three-tuples of natural numbers whose sum is $d$. Thus
\begin{equation*}
\bigtriangleup_{d}=\{(r,s,t)\mid r,s,t \in \mathbb{N}, r+s+t=d\}.
\end{equation*}
\end{1def}

\begin{rmk}
We can arrange the elements of $\bigtriangleup_{d}$ in a triangular array. For example, if $d=3$, the array looks as follows after deleting all punctuation:
\[
    \setlength{\arraycolsep}{1pt}
    \begin{array}{ccccccc}
        &   &   & 030 &   &   &   \\
        &   & 120 &   & 021 &   &   \\
        & 210 &   & 111 &   & 012 &   \\
      300 &   & 201 &   & 102 &   & 003 \\
    \end{array}
  \]
An element in $\bigtriangleup_{d}$ is called a {\it location}.
\end{rmk}
\begin{1def}
For $\eta\in \{1,2,3\}$, the $\eta$-{\it corner} of $\bigtriangleup_{d}$ is the location in $\bigtriangleup_{d}$ that has $\eta$-coordinate $d$ and all other coordinates 0. By a {\it corner} of $\bigtriangleup_{d}$ we mean the 1-corner or 2-corner or 3-corner. The corners in $\bigtriangleup_{d}$ are listed below.
\begin{displaymath}
de_1=(d,0,0), \qquad de_2=(0,d,0), \qquad de_3=(0,0,d).
\end{displaymath}
\end{1def}
\begin{1def}
For $\eta\in \{1,2,3\}$, the $\eta$-{\it boundary} of $\bigtriangleup_{d}$ is the set of locations in $\bigtriangleup_{d}$ that have $\eta$-coordinate $0$. The {\it boundary} of $\bigtriangleup_{d}$ is the union of its 1-boundary, 2-boundary and 3-boundary. By the {\it interior} of $\bigtriangleup_{d}$ we mean the set of locations in $\bigtriangleup_{d}$ that are not on the boundary.
\end{1def}
\begin{exa}
Referring to the picture of $\bigtriangleup_{3}$ from Remark 5.2, the 2-boundary of $\bigtriangleup_{3}$ consists of the four locations in the bottom row.
\end{exa}
\begin{1def}
For $\eta\in \{1,2,3\}$ we define a binary relation on $\bigtriangleup_{d}$ called $\eta$-{\it collinearity}. By definition, locations $\lambda,\lambda^{\prime}$ in $\bigtriangleup_{d}$ are $\eta$-collinear whenever the $\eta$-coordinate of $\lambda-\lambda^{\prime}$ is 0. Note that $\eta$-collinearity is an equivalence relation. Each equivalence class will be called an {\it $\eta$-line}. By a {\it line} in $\bigtriangleup_{d}$ we mean a 1-line or 2-line or 3-line.
\end{1def}
\begin{exa}
Referring to the picture of $\bigtriangleup_{3}$ from Remark 5.2, the horizontal rows are the $2$-lines of $\bigtriangleup_{3}$.
\end{exa}

\begin{1def}
Locations $\lambda,\mu$ in $\bigtriangleup_{d}$ are called {\it adjacent} whenever $\lambda-\mu\in \Phi$.
\end{1def}

\begin{1def}
By an {\it edge} in $\bigtriangleup_{d}$ we mean a set of two adjacent locations.
\end{1def}

\begin{1def}
By a {\it 3-clique} in $\bigtriangleup_{d}$ we mean a set of three mutually adjacent locations in $\bigtriangleup_{d}$. There are two kinds of 3-cliques: $\bigtriangleup$({\it black}) and $\bigtriangledown$({\it white}).
\end{1def}

\begin{lem}
\cite[Lemma~4.31]{paul} Assume $d\ge 1$. We describe a bijection from $\bigtriangleup_{d-1}$ to the set of black 3-cliques in $\bigtriangleup_{d}$. The bijection sends each $(r,s,t)\in \bigtriangleup_{d-1}$ to the black 3-clique in $\bigtriangleup_{d}$ consisting of the locations $(r+1,s,t),(r,s+1,t), (r,s,t+1)$.
\end{lem}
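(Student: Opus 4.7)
The plan is to prove the map
\begin{equation*}
\varphi\colon\bigtriangleup_{d-1}\to\{\text{black 3-cliques of }\bigtriangleup_d\},\qquad (r,s,t)\mapsto\{(r+1,s,t),(r,s+1,t),(r,s,t+1)\}
\end{equation*}
is a bijection by checking well-definedness, injectivity, and surjectivity in turn; the first two are short, and the substance lies in surjectivity.

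For well-definedness, each of the three triples $(r+1,s,t)$, $(r,s+1,t)$, $(r,s,t+1)$ has nonnegative integer coordinates summing to $d$, so lies in $\bigtriangleup_d$; and their cyclic pairwise differences
\begin{equation*}
(r+1,s,t)-(r,s+1,t)=\alpha,\qquad (r,s+1,t)-(r,s,t+1)=\beta,\qquad (r,s,t+1)-(r+1,s,t)=\gamma
\end{equation*}
lie in $\Phi$, so the three locations are mutually adjacent. The cyclic sign pattern $(\alpha,\beta,\gamma)$ is the signature of the upward orientation and identifies the clique as black (its white counterpart has signature $(-\alpha,-\beta,-\gamma)$). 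Injectivity is immediate since $(r,s,t)$ is recovered as the coordinate-wise minimum of the three locations in $\varphi(r,s,t)$.

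For surjectivity, let $C=\{\lambda_1,\lambda_2,\lambda_3\}$ be an arbitrary black 3-clique in $\bigtriangleup_d$, and set $a=\lambda_1-\lambda_2$, $b=\lambda_2-\lambda_3$, $c=\lambda_3-\lambda_1$. Then $a,b,c\in\Phi$ with $a+b+c=0$. Using $\alpha+\beta+\gamma=0$ together with $0\notin\Phi$, a brief enumeration shows that the only ordered triples in $\Phi^3$ summing to zero are the six permutations of $(\alpha,\beta,\gamma)$ and the six permutations of $(-\alpha,-\beta,-\gamma)$. The black hypothesis forces $(a,b,c)$ into the first family, and after a cyclic relabeling of the $\lambda_i$ I may assume $(a,b,c)=(\alpha,\beta,\gamma)$. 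These relations are equivalent to $\lambda_1-e_1=\lambda_2-e_2=\lambda_3-e_3$; denoting this common vector by $(r,s,t)$ gives $\lambda_1=(r+1,s,t)$, $\lambda_2=(r,s+1,t)$, $\lambda_3=(r,s,t+1)$, so $C=\varphi(r,s,t)$. Moreover $r,s,t\in\mathbb{N}$ because each is a coordinate of some $\lambda_i\in\mathbb{N}^3$, and $r+s+t=d-1$ from the coordinate sum of $\lambda_1$.

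The main obstacle is in the surjectivity step, and more specifically in pinning down a workable formal version of the black/white dichotomy: the excerpt introduces these labels only through the pictograms $\bigtriangleup$ and $\bigtriangledown$, so the proof relies on the convention (consistent with the pictorial meaning) that \emph{black} refers to 3-cliques whose cyclic signed differences can be ordered as $(\alpha,\beta,\gamma)$ rather than $(-\alpha,-\beta,-\gamma)$. With that convention in hand, the remaining arguments are routine.
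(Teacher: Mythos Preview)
The paper does not prove this lemma; it is quoted from \cite[Lemma~4.31]{paul} without argument, so there is no in-paper proof to compare against. Your proposal supplies a direct proof, and the overall strategy (well-definedness, injectivity via the coordinate-wise minimum, surjectivity via classifying solutions of $a+b+c=0$ in $\Phi^3$) is sound.

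There is, however, a slip in the surjectivity step. You correctly enumerate the twelve ordered triples in $\Phi^3$ summing to zero as the six permutations of $(\alpha,\beta,\gamma)$ together with the six permutations of $(-\alpha,-\beta,-\gamma)$, but the claim that ``the black hypothesis forces $(a,b,c)$ into the first family'' is false. For a black clique with the odd labeling $\lambda_1=(r,s+1,t)$, $\lambda_2=(r+1,s,t)$, $\lambda_3=(r,s,t+1)$ one gets $(a,b,c)=(-\alpha,-\gamma,-\beta)$, which lies in your second family. Both families meet both colours; the correct dichotomy is by \emph{parity}: black cliques yield exactly the three even permutations of $(\alpha,\beta,\gamma)$ and the three odd permutations of $(-\alpha,-\beta,-\gamma)$, while white cliques yield the complementary six. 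Consequently ``after a cyclic relabeling I may assume $(a,b,c)=(\alpha,\beta,\gamma)$'' also fails from an odd starting labeling, since cyclic relabelings preserve parity.

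The repair is immediate and already implicit in your own convention: since you take ``black'' to mean that \emph{some} ordering of the clique gives $(a,b,c)=(\alpha,\beta,\gamma)$, simply invoke that ordering at the outset and drop the twelve-triple detour. With that adjustment your argument goes through.
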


\begin{lem}
\cite[Lemma~4.32]{paul} Assume $d\ge 2$. We describe a bijection from $\bigtriangleup_{d-2}$ to the set of white 3-cliques in $\bigtriangleup_{d}$. The bijection sends each $(r,s,t)\in \bigtriangleup_{d-2}$ to the white 3-clique in $\bigtriangleup_{d}$ consisting of the locations $(r,s+1,t+1),(r+1,s,t+1), (r+1,s+1,t)$.
\end{lem}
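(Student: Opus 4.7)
The plan is to verify in turn that the prescribed assignment is well-defined, injective, and surjective onto the set of white 3-cliques in $\bigtriangleup_d$.

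For well-definedness, fix $(r,s,t)\in\bigtriangleup_{d-2}$. The three triples $(r,s+1,t+1)$, $(r+1,s,t+1)$, $(r+1,s+1,t)$ have nonnegative integer coordinates summing to $d$, hence lie in $\bigtriangleup_d$. Using (5.1), their three pairwise differences are, up to sign, $\alpha$, $\beta$, $\gamma$; so the three locations are mutually adjacent (Definition 5.8) and form a 3-clique (Definition 5.10). The cyclic pattern of directed differences here is $(\alpha,\beta,\gamma)$, whereas for the black 3-clique of Lemma 5.11 the cyclic pattern is $(-\alpha,-\beta,-\gamma)$; the two triangles therefore have opposite orientations, so the present clique is white.

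Injectivity is immediate: from the unordered set $\{(r,s+1,t+1), (r+1,s,t+1), (r+1,s+1,t)\}$ we recover $r,s,t$ as the componentwise minima of the first, second, and third coordinates, respectively.

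The main step is surjectivity. Let $C=\{\lambda_1,\lambda_2,\lambda_3\}$ be a white 3-clique in $\bigtriangleup_d$. Each pairwise difference lies in $\Phi=\{\pm\alpha,\pm\beta,\pm\gamma\}$, and since $\alpha+\beta=-\gamma\in\Phi$ while $\alpha-\beta\notin\Phi$, the three directed differences read cyclically around $\lambda_1,\lambda_2,\lambda_3$ must carry consistent signs. Thus, up to cyclic reindexing, either $\lambda_2-\lambda_1=\alpha$, $\lambda_3-\lambda_2=\beta$, $\lambda_1-\lambda_3=\gamma$, matching the orientation found in the well-definedness step, or all three signs flip, matching the black orientation of Lemma 5.11. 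Since $C$ is white, the first case applies. Writing $\lambda_1=(a,b,c)$ then forces $\lambda_2=(a+1,b-1,c)$ and $\lambda_3=(a+1,b,c-1)$, and membership in $\bigtriangleup_d$ forces $b,c\ge 1$. Setting $r=a$, $s=b-1$, $t=c-1$ produces an element of $\bigtriangleup_{d-2}$ whose image is $C$.

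The only delicate point I anticipate is the sign-consistency argument on the three pairwise differences in the surjectivity step, together with the identification of the two resulting cyclic patterns with the black and white orientations fixed by Lemma 5.11. Once that matching is in place, everything else reduces to routine coordinate bookkeeping.
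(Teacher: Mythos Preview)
The paper does not prove this lemma; it is quoted from \cite[Lemma~4.32]{paul} without an accompanying argument. Your proposal therefore cannot be compared against a proof in the paper, but it does supply a correct self-contained argument.

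One small wording issue in the surjectivity step: you write ``up to cyclic reindexing, either $\lambda_2-\lambda_1=\alpha$, $\lambda_3-\lambda_2=\beta$, $\lambda_1-\lambda_3=\gamma$, or all three signs flip.'' Cyclic reindexing alone only permutes the sequence of differences cyclically; to reach the two canonical patterns you also need to allow reversal of the cyclic order (equivalently, arbitrary relabeling of the unordered set $\{\lambda_1,\lambda_2,\lambda_3\}$). With that understood, your dichotomy is correct: the three directed differences around any $3$-clique form, as an unordered set, either $\{\alpha,\beta,\gamma\}$ or $\{-\alpha,-\beta,-\gamma\}$, and after relabeling one obtains precisely the white pattern $(\alpha,\beta,\gamma)$ or the black pattern $(-\alpha,-\beta,-\gamma)$ of Lemma~5.11. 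A slightly cleaner way to phrase the invariant, avoiding orientation language altogether, is to observe that the coordinate-wise minima of the three locations in a $3$-clique sum to $d-1$ in the black case and to $d-2$ in the white case; this separates the two families and hands you the inverse map $(r,s,t)$ directly as those minima. Either formulation completes the proof.
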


\begin{lem}
\cite[Lemma~4.33]{paul} For $\bigtriangleup_{d}$, each edge is contained in a unique black 3-clique and at most one white 3-clique.
\end{lem}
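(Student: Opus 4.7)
The plan is to fix an edge, exploit the symmetry of $\bigtriangleup_d$ under coordinate permutations to reduce to a single ``direction,'' and then read off the 3-cliques from the bijections in Lemmas 4.31 and 4.32. The three coordinate-permutations of $\mathbb{R}^3$ act on $\bigtriangleup_d$; they permute the roots $\{\pm\alpha,\pm\beta,\pm\gamma\}$ and, because both Lemmas 4.31 and 4.32 are stated symmetrically in $(r,s,t)$, they preserve the set of black 3-cliques and the set of white 3-cliques separately. So I may assume the edge has the form $\{\mu,\lambda\}$ with $\lambda-\mu=\alpha$; writing $\mu=(a,b,c)$ with $a+b+c=d$, this forces $\lambda=(a+1,b-1,c)$ and $b\ge 1$.

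For the black count I would apply Lemma 4.31: a black 3-clique has the form $\{(r+1,s,t),(r,s+1,t),(r,s,t+1)\}$ for a unique $(r,s,t)\in\bigtriangleup_{d-1}$, and its three unordered pairs have differences lying in $\{\pm\alpha\}$, $\{\pm\beta\}$, $\{\pm\gamma\}$ respectively. The $\{\pm\alpha\}$-pair is $\{(r+1,s,t),(r,s+1,t)\}$, and matching it to $\{\lambda,\mu\}$ forces $(r,s,t)=(a,b-1,c)$. Since $b\ge 1$, this triple automatically lies in $\bigtriangleup_{d-1}$, so there is exactly one black 3-clique containing the edge.

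For the white count I would argue similarly using Lemma 4.32: a white 3-clique is $\{(r,s+1,t+1),(r+1,s,t+1),(r+1,s+1,t)\}$ with a unique $(r,s,t)\in\bigtriangleup_{d-2}$, and the $\{\pm\alpha\}$-pair is $\{(r,s+1,t+1),(r+1,s,t+1)\}$. Matching yields $(r,s,t)=(a,b-1,c-1)$, which lies in $\bigtriangleup_{d-2}$ if and only if $c\ge 1$. Hence the edge is contained in exactly one white 3-clique when $c\ge 1$ and in none when $c=0$, giving at most one white 3-clique in every case. The only step that requires any care is the symmetry reduction, which rests on the observation that transposing two coordinates carries the parametrization of black (resp.\ white) 3-cliques to itself; the rest is direct bookkeeping with the parametrizations of Lemmas 4.31 and 4.32.
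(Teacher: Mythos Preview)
Your argument is correct. The paper does not supply its own proof of this lemma; it simply cites \cite[Lemma~4.33]{paul}, so there is no in-paper argument to compare against. Your proof is a clean, self-contained substitute: the symmetry reduction is legitimate (cyclic coordinate permutations cycle $\alpha,\beta,\gamma$ and carry the parametrizations of Lemmas~5.11 and~5.12 to themselves, hence preserve the black/white distinction), and the matching against those parametrizations is accurate, including the boundary check $c\ge 1$ for the white case.
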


Let $\mathcal{P}_1(V)$ denote the set of $1$-dimensional subspaces of $V$.
\begin{1def}
\cite[Definition~7.1]{paul} By a {\it Billiard Array} on $V$ we mean a function $B: \bigtriangleup_{d}\to \mathcal{P}_{1}(V), \lambda \mapsto B_{\lambda}$ that satisfies the following conditions:
\begin{enumerate}
\item[\rm(i)] for each line $L$ in $\bigtriangleup_{d}$ the sum $\sum_{\lambda\in L}B_{\lambda}$ is direct;
\item[\rm(ii)] for each black 3-clique $C$ in $\bigtriangleup_{d}$ the sum $\sum_{\lambda\in C}B_{\lambda}$ is not direct.
\end{enumerate}
We say that $B$ is {\it over} $\mathbb{F}$. We call $V$ the {\it underlying vector space}. We call $d$ the {\it diameter} of $B$.
\end{1def}
\begin{lem}
\cite[Corollary~7.4]{paul} Let $B$ denote a Billiard Array on $V$. Let $\lambda,\mu,\nu$ denote the locations in $\bigtriangleup_{d}$ that form a black 3-clique. Then each of $B_\lambda,B_\mu,B_\nu$ is contained in the sum of the other two.
\end{lem}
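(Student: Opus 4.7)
The plan is to show $\dim(B_\lambda + B_\mu + B_\nu) = 2$; once this is in hand, the conclusion is immediate, because a $2$-dimensional subspace of a $2$-dimensional space is the whole space. First I would use Definition 5.14(ii) directly: the sum $B_\lambda + B_\mu + B_\nu$ is not direct, and each summand is $1$-dimensional, so $\dim(B_\lambda + B_\mu + B_\nu) \leq 2$.

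Next I would establish the matching lower bound by showing $\dim(B_\mu + B_\nu) = 2$ (and symmetrically for the other pairs). For this, observe that since $\mu, \nu$ are adjacent we have $\mu - \nu \in \Phi = \{\pm \alpha, \pm \beta, \pm \gamma\}$; each of these six vectors has a zero coordinate, so $\mu$ and $\nu$ are $\eta$-collinear for some $\eta \in \{1,2,3\}$. Hence $\mu$ and $\nu$ both lie on a common line $L$ of $\bigtriangleup_d$. By Definition 5.14(i) the sum $\sum_{\rho \in L} B_\rho$ is direct, and in particular $B_\mu + B_\nu$ is direct and therefore $2$-dimensional.

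Combining the two steps gives $B_\mu + B_\nu \subseteq B_\lambda + B_\mu + B_\nu$ with both subspaces of dimension $2$, hence $B_\mu + B_\nu = B_\lambda + B_\mu + B_\nu$; this yields $B_\lambda \subseteq B_\mu + B_\nu$, and the other two containments follow by permuting the roles of $\lambda, \mu, \nu$. The argument is essentially a dimension count, so I anticipate no serious obstacle; the one small point requiring care is the passage from adjacency to collinearity, which amounts to inspecting the six vectors in $\Phi$ and noting that each has a vanishing coordinate.
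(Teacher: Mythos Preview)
Your argument is correct. The dimension count is exactly right: Definition~5.14(ii) caps $\dim(B_\lambda+B_\mu+B_\nu)$ at $2$, and your observation that any two adjacent locations lie on a common line (since each element of $\Phi$ has a vanishing coordinate) lets you invoke Definition~5.14(i) to see that any two of $B_\lambda,B_\mu,B_\nu$ already span a $2$-dimensional subspace. The containments follow.

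As for comparison with the paper: this lemma is not proved in the present paper at all---it is quoted verbatim from \cite[Corollary~7.4]{paul} without argument. So there is nothing here to compare your approach against. Your proof is a clean, self-contained justification of the cited fact, relying only on the two defining axioms of a Billiard Array and the elementary combinatorics of $\Phi$; this is presumably close in spirit to what the original reference does, since there is really only one natural route.
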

\begin{1def}
Let $V^{\prime}$ denote a vector space over $\mathbb{F}$ with dimension d+1. Let $B$ (resp. $B^{\prime}$) denote a Billiard Array on $V$ (resp. $V^\prime$). By an {\it isomorphsim} of Billiard Arrays from $B$ to $B^\prime$ we mean an $\mathbb{F}$-linear bijection $\sigma:  V \to V^\prime$ that sends $B_\lambda \mapsto B^{\prime}_{\lambda}$ for all $\lambda \in \bigtriangleup_{d}$. The Billiard Arrays $B$ and $B^\prime$ are called {\it isomorphic} whenever there exists an isomorphism of Billiard Arrays from $B$ to $B^\prime$.
\end{1def}
From now until the end of Lemma 5.21, let $B$ denote a Billiard Array on $V$.

\begin{1def}
Pick $\eta\in\{1, 2, 3\}$. Following \cite[Section~9]{paul} we now define a flag on $V$ called the {\it$B$-flag $[\eta]$}. For $0\le i\le d$, the $i$-component of this flag is $\sum_{\lambda}B_\lambda$, where the sum is over all $\lambda\in \bigtriangleup_{d}$ that have $\eta$-coordinate at least $d-i$.
\end{1def}

\begin{1def}
Pick distinct $\eta,\xi\in\{1, 2, 3\}$. Following \cite[Section~10]{paul} we now define a decomposition of $V$ called the {\it $B$-decomposition $[\eta,\xi]$}. For $0 \leq i \leq d$ the $i$-component of this decomposition is the subspace $B_\lambda$, where the location $\lambda$ is described in the table below:
\bigskip

\centerline{
\begin{tabular}[t]{c|c|c}
 $\eta $ & $\xi$ &
    $\lambda_i$ 
   \\ \hline  \hline
$1$ & $2$ &
$(d-i,i,0)$ 
  \\ 
  \hline
$2$ & $1$ &
 $(i,d-i,0)$
   \\
\hline
$2$ & $3$ &
 $(0,d-i,i)$ 
  \\
  \hline
$3$ & $2$ &
  $(0,i,d-i)$
   \\
\hline
$3$ & $1$ &
$(i,0,d-i)$ 
  \\
  \hline
$1$ & $3$ &
 $(d-i,0,i)$
   \\
     \end{tabular}}
     \bigskip
\end{1def}

\begin{lem}
\cite[Lemma~10.6]{paul} For distinct $\eta, \xi\in\{1, 2, 3\}$ the $B$-decomposition $[\eta,\xi]$ of $V$ induces the $B$-flag $[\eta]$ on $V$.
\end{lem}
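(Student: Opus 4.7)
The plan is to show the two flags agree $i$-component by $i$-component. Fix distinct $\eta,\xi\in\{1,2,3\}$ and let $\zeta$ denote the remaining element. A direct inspection of the table in Definition 5.19 shows that in every case the location $\lambda_i$ has $\eta$-coordinate $d-i$, $\xi$-coordinate $i$, and $\zeta$-coordinate $0$. Hence for $0\le i\le d$ the $i$-component of the flag induced by the $B$-decomposition $[\eta,\xi]$ is $B_{\lambda_0}+B_{\lambda_1}+\cdots+B_{\lambda_i}$, while by Definition 5.18 the $i$-component of the $B$-flag $[\eta]$ is $\sum_{\mu}B_\mu$, the sum ranging over all $\mu\in\bigtriangleup_d$ with $\mu_\eta\ge d-i$. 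I must show these two subspaces of $V$ coincide for every $i$.

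One inclusion is immediate: for $j\le i$ the location $\lambda_j$ has $\eta$-coordinate $d-j\ge d-i$, so $B_{\lambda_j}$ appears in the right-hand sum. For the reverse inclusion I will prove by induction on $\mu_\zeta$ that $B_\mu\subseteq B_{\lambda_0}+\cdots+B_{\lambda_i}$ whenever $\mu\in\bigtriangleup_d$ satisfies $\mu_\eta\ge d-i$. When $\mu_\zeta=0$, the relation $\mu_\eta+\mu_\xi=d$ together with $\mu_\eta\ge d-i$ forces $\mu_\xi\le i$; comparing coordinates one sees that $\mu=\lambda_{\mu_\xi}$, so $B_\mu$ is manifestly a summand on the right.

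For the inductive step assume $\mu_\zeta\ge 1$. Then the set $\{\mu,\ \mu+e_\eta-e_\zeta,\ \mu+e_\xi-e_\zeta\}$ is the image under the bijection of Lemma 5.11 of the point $\mu-e_\zeta\in\bigtriangleup_{d-1}$, and hence is a black $3$-clique in $\bigtriangleup_d$. Applying Lemma 5.16 to this clique gives $B_\mu\subseteq B_{\mu+e_\eta-e_\zeta}+B_{\mu+e_\xi-e_\zeta}$. Each of the two locations on the right has $\zeta$-coordinate $\mu_\zeta-1$ and $\eta$-coordinate at least $\mu_\eta\ge d-i$, so the inductive hypothesis places both of their $B$-values in $B_{\lambda_0}+\cdots+B_{\lambda_i}$, completing the step. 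The only real subtlety I anticipate is choosing the right induction variable $\mu_\zeta$ and the right black $3$-clique at $\mu$; once one observes that each $\lambda_j$ lies on the $\zeta$-line $\{\mu\in\bigtriangleup_d:\mu_\zeta=0\}$, the six rows of the table in Definition 5.19 collapse into this single uniform argument with no separate case analysis.
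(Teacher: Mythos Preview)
Your argument is correct. The paper itself does not prove this lemma; it merely cites \cite[Lemma~10.6]{paul}. So there is nothing to compare at the level of strategy --- you have supplied a self-contained proof where the paper gives only a reference. Your induction on the $\zeta$-coordinate, using Lemma~5.15 on a well-chosen black $3$-clique to push $B_\mu$ into a sum over locations with smaller $\zeta$-coordinate, is exactly the mechanism the paper employs later in its proof of Proposition~5.41 (there specialized to $\zeta=2$), so your method is fully in the spirit of the surrounding material.

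Two small corrections: your references are off by one throughout. The table of locations $\lambda_i$ is in Definition~5.18, not 5.19; the $B$-flag $[\eta]$ is Definition~5.17, not 5.18; and the containment $B_\mu\subseteq B_{\mu+e_\eta-e_\zeta}+B_{\mu+e_\xi-e_\zeta}$ comes from Lemma~5.15, not Lemma~5.16 (which is the definition of isomorphism). None of this affects the mathematics.
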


\begin{lem}
\cite[Theorem~12.4]{paul} The $B$-flags $[1]$, $[2]$, $[3]$ on $V$ from Definition 5.17 are totally opposite in the sense of Definition 3.12.
\end{lem}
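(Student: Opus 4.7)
The plan is to apply the equivalence (i)$\Leftrightarrow$(ii) of Lemma 3.13. Writing $F^{(\eta)}_i$ for the $i$-component of the $B$-flag $[\eta]$, it suffices to show that for each $n$ with $0\le n\le d$, the three sequences
\[
\{F^{(1)}_i\}_{i=0}^{d-n}, \qquad \{F^{(1)}_{d-n}\cap F^{(2)}_{n+i}\}_{i=0}^{d-n}, \qquad \{F^{(1)}_{d-n}\cap F^{(3)}_{n+i}\}_{i=0}^{d-n}
\]
are pairwise opposite flags on $F^{(1)}_{d-n}$.

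The case $n=0$ reduces to pairwise opposition of the three $B$-flags on $V$, which I obtain from Lemma 5.20 together with Lemma 3.9: for distinct $\eta,\xi\in\{1,2,3\}$, the $B$-decompositions $[\eta,\xi]$ and $[\xi,\eta]$ induce the $B$-flags $[\eta]$ and $[\xi]$, and inspection of the table in Definition 5.18 shows that $[\eta,\xi]$ and $[\xi,\eta]$ are inversions of one another. For general $n$, I would use the decomposition $[1,2]$ and the just-established opposition of flags $[1]$ and $[2]$ to derive the explicit formula
\[
F^{(1)}_{d-n}\cap F^{(2)}_{n+i} \;=\; \bigoplus_{k=d-n-i}^{d-n} B_{(d-k,\,k,\,0)}.
\]
Thus the second sequence is the flag on $F^{(1)}_{d-n}$ induced by the decomposition $\{B_{(n+i,\,d-n-i,\,0)}\}_{i=0}^{d-n}$, which is the inversion of the decomposition $\{B_{(d-i,\,i,\,0)}\}_{i=0}^{d-n}$ inducing the first sequence; Lemma 3.9 then gives opposition. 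A parallel computation using $[1,3]$ and $[3,1]$ handles the first and third sequences.

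The main obstacle is the opposition between the second and third sequences on $F^{(1)}_{d-n}$, since neither one comes from the $B$-flag $[1]$ directly. I would resolve this by observing that the restriction of $B$ to $\{\lambda\in\bigtriangleup_d : \lambda_1\ge n\}$, identified with $\bigtriangleup_{d-n}$ via $(\lambda_1,\lambda_2,\lambda_3)\mapsto (\lambda_1-n,\lambda_2,\lambda_3)$, defines a Billiard Array $B^{(n)}$ on $F^{(1)}_{d-n}$ of diameter $d-n$: the axioms of Definition 5.14 transfer from $B$ to $B^{(n)}$ because lines and black $3$-cliques of the subtriangle are also lines and black $3$-cliques of $\bigtriangleup_d$. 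Under this identification, the second and third sequences are precisely the $B^{(n)}$-flags $[2]$ and $[3]$ (induced by the $B^{(n)}$-decompositions $[2,1]$ and $[3,1]$, equivalently by $[2,3]$ and $[3,2]$), and their opposition follows from the $n=0$ argument applied to $B^{(n)}$.
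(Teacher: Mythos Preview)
The paper does not prove this lemma at all; it is simply quoted from \cite[Theorem~12.4]{paul} and used as a black box. So there is no ``paper's own proof'' to compare against, and your write-up is effectively an independent proof of a cited result.

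On the substance: your argument via Lemma~3.13(ii) and the restricted Billiard Array $B^{(n)}$ on $F^{(1)}_{d-n}$ is sound. The key observations---that $[\eta,\xi]$ and $[\xi,\eta]$ are mutual inversions (so Lemma~3.9(ii) yields pairwise opposition), that $F^{(1)}_{d-n}\cap F^{(2)}_{n+i}$ can be computed from the decomposition $[1,2]$ once flags $[1]$ and $[2]$ are known to be opposite, and that the subtriangle $\{\lambda:\lambda_1\ge n\}$ carries a Billiard Array on $F^{(1)}_{d-n}$---are all correct and fit together as you describe. One small point worth making explicit: to identify the second and third sequences with the $B^{(n)}$-flags $[2]$ and $[3]$, you are tacitly invoking Lemma~5.19 for $B^{(n)}$, which is legitimate since that lemma holds for any Billiard Array.

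There is, however, a genuine slip in your references: in the sentence ``which I obtain from Lemma~5.20 together with Lemma~3.9,'' Lemma~5.20 is precisely the statement you are trying to prove, so as written this is circular. From the description that follows (the $B$-decompositions $[\eta,\xi]$ inducing the $B$-flags $[\eta]$), you clearly intend Lemma~5.19. Fix that citation and the argument stands.
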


\begin{lem}
\cite[Corollary~11.2]{paul} Pick a location $\lambda=(r,s,t)$ in $\bigtriangleup_{d}$. Then $B_\lambda$ is equal to the intersection of the following three sets:
\begin{enumerate}
\item[\rm(i)] component $d-r$ of the $B$-flag $[1]$;
\item[\rm(ii)] component $d-s$ of the $B$-flag $[2]$;
\item[\rm(iii)] component $d-t$ of the $B$-flag $[3]$.
\end{enumerate}
\end{lem}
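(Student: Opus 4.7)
Write $W_i$, $W_i^{\prime}$, $W_i^{\prime\prime}$ for the $i$-components of the $B$-flags $[1]$, $[2]$, $[3]$, so that the claim becomes $B_\lambda=W_{d-r}\cap W_{d-s}^{\prime}\cap W_{d-t}^{\prime\prime}$. The plan is to prove both inclusions separately.

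For $B_\lambda\subseteq W_{d-r}\cap W_{d-s}^{\prime}\cap W_{d-t}^{\prime\prime}$, I would simply unpack Definition~5.17: since $\lambda=(r,s,t)$ has $1$-coordinate equal to $r$, the subspace $B_\lambda$ appears as one of the summands defining $W_{d-r}$, and the same reasoning with the $2$- and $3$-coordinates places $B_\lambda$ inside $W_{d-s}^{\prime}$ and $W_{d-t}^{\prime\prime}$. This half is immediate.

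For the reverse inclusion, since $\dim B_\lambda=1$ it suffices to bound the dimension of the triple intersection by $1$. Here I would invoke the total opposition of the $B$-flags (Lemma~5.20) together with Lemma~3.13(ii), applied with $n=r$. This supplies three mutually opposite flags on $W_{d-r}$, which has dimension $d-r+1=s+t+1$: the restriction $\{W_i\}_{i=0}^{d-r}$, the flag $\{W_{d-r}\cap W_{r+i}^{\prime}\}_{i=0}^{d-r}$, and the flag $\{W_{d-r}\cap W_{r+i}^{\prime\prime}\}_{i=0}^{d-r}$. Using $r+s+t=d$, the triple intersection I want coincides with the intersection inside $W_{d-r}$ of the $t$-component of the second flag (dimension $t+1$) and the $s$-component of the third flag (dimension $s+1$). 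Since these two flags on $W_{d-r}$ are opposite, Lemma~3.9 produces a decomposition of $W_{d-r}$ that induces one of them and whose inversion induces the other; reading the relevant components off this decomposition shows that their intersection has dimension exactly $(t+1)+(s+1)-(s+t+1)=1$, which together with the containment forces equality with $B_\lambda$.

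The main obstacle I anticipate is the index bookkeeping: translating $W_{d-s}^{\prime}$ and $W_{d-t}^{\prime\prime}$ into the $i$-indexed restricted flags of Lemma~3.13(ii) and verifying that the constraint $r+s+t=d$ makes the relevant dimensions line up precisely. Beyond this, the argument is a direct opposition-plus-dimension-count, with no substantive combinatorial or algebraic difficulty.
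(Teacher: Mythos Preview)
The paper does not supply its own proof of this lemma; it is quoted verbatim from \cite[Corollary~11.2]{paul} and used as a black box. So there is nothing to compare against in the present paper.

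That said, your argument is correct and self-contained using only the tools assembled here. The forward inclusion is exactly as you say, immediate from Definition~5.17. For the reverse inclusion, your use of Lemma~3.13(ii) with $n=r$ is the right move: writing $d-s=r+t$ and $d-t=r+s$ identifies $W_{d-r}\cap W'_{d-s}$ and $W_{d-r}\cap W''_{d-t}$ as the $t$- and $s$-components of two opposite flags on the $(s+t+1)$-dimensional space $W_{d-r}$, and the final clause of Lemma~3.9 (applied on $W_{d-r}$ with its dimension parameter $s+t$) gives the intersection as a single component of the associated decomposition, hence one-dimensional. The bookkeeping you anticipate is routine; the identity $r+s+t=d$ makes the indices match on the nose, and no further idea is needed.
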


\begin{thm}
\cite[Theorem~12.7]{paul} Suppose that we are given three totally opposite flags on $V$, denoted by $\{W_{i}\}_{i=0}^{d}$, $\{W_{i}^{\prime}\}_{i=0}^{d}$, $\{W_{i}^{\prime\prime}\}_{i=0}^{d}$. For each location $\lambda=(r,s,t)$ in $\bigtriangleup_{d}$, define $B_{\lambda}=W_{d-r}\cap W_{d-s}^{\prime}\cap W_{d-t}^{\prime\prime}$. Then the map $B: \bigtriangleup_{d}\to \mathcal{P}_{1}(V), \lambda \mapsto B_{\lambda}$ is a Billiard Array on $V$.
\end{thm}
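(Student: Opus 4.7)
The plan is to verify, in order, the three requirements of Definition 5.14 applied to the proposed assignment $\lambda \mapsto B_\lambda := W_{d-r} \cap W_{d-s}' \cap W_{d-t}''$: (a) that each $B_\lambda$ actually lies in $\mathcal{P}_1(V)$, i.e.\ has dimension $1$; (b) that for every line $L \subseteq \bigtriangleup_d$ the sum $\sum_{\lambda \in L} B_\lambda$ is direct; and (c) that for every black $3$-clique $C \subseteq \bigtriangleup_d$ the sum $\sum_{\lambda \in C} B_\lambda$ is not direct. The main tool throughout is the reformulation of total oppositeness furnished by Lemma 3.13 together with the decomposition interpretation of two opposite flags in Lemma 3.9; condition (a) and condition (b) will come out of the same computation, while (c) will require one extra dimension tally.

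For (a), fix $\lambda=(r,s,t)\in\bigtriangleup_d$ and apply Lemma 3.13 (i)$\Rightarrow$(ii) with $n=r$: the sequences $\{W_i\}_{i=0}^{d-r}$, $\{W_{d-r}\cap W_{r+i}'\}_{i=0}^{d-r}$, $\{W_{d-r}\cap W_{r+i}''\}_{i=0}^{d-r}$ are mutually opposite flags on $W_{d-r}$. By Lemma 3.9, the latter two produce a decomposition $\{V_i\}_{i=0}^{d-r}$ of $W_{d-r}$ whose $i$-component equals $(W_{d-r}\cap W_{r+i}')\cap(W_{d-r}\cap W_{r+(d-r-i)}'')$. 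Setting $i=t$ and using $r+s+t=d$ (so $r+i=d-s$ and $r+(d-r-i)=d-t$), this component is precisely $B_\lambda$, which is therefore one-dimensional.

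For (b), by the symmetry among parts (ii)--(iv) of Lemma 3.13 it suffices to treat a $1$-line $L=\{(r,s,t)\in\bigtriangleup_d : s+t=d-r\}$ for fixed $r$. As $\lambda$ ranges over $L$, equivalently as $t$ ranges over $\{0,1,\dots,d-r\}$, the first step shows that $B_\lambda$ is exactly the $t$-component $V_t$ of the decomposition $\{V_i\}_{i=0}^{d-r}$ of $W_{d-r}$. Hence $\sum_{\lambda\in L}B_\lambda = \bigoplus_{t=0}^{d-r}V_t = W_{d-r}$, a direct sum. For (c), Lemma 5.10 lets me write a black $3$-clique as $\{(r+1,s,t),(r,s+1,t),(r,s,t+1)\}$ with $(r,s,t)\in\bigtriangleup_{d-1}$; each of the three corresponding subspaces is contained in the common ambient space $W_{d-r}\cap W_{d-s}'\cap W_{d-t}''$. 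Rerunning the opposite-flag computation of step (a) on $W_{d-r}$ (of dimension $d-r+1$), the two components $W_{d-r}\cap W_{d-s}'$ and $W_{d-r}\cap W_{d-t}''$ correspond to indices $t+1$ and $s+1$ respectively, and the explicit description in Lemma 3.9 gives
\begin{equation*}
\dim\bigl(W_{d-r}\cap W_{d-s}'\cap W_{d-t}''\bigr)=(t+1)+(s+1)-(d-r)+1=2,
\end{equation*}
using $r+s+t=d-1$. Three $1$-dimensional subspaces of a $2$-dimensional space cannot be in direct sum, so condition (ii) of Definition 5.14 holds.

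The one genuinely delicate point is the bookkeeping in the opposite-flag analysis: correctly identifying which component of the decomposition produced by Lemma 3.9 corresponds to a given location $\lambda$, and tracking how the ambient intersection jumps from dimension $1$ (when $r+s+t=d$) to dimension $2$ (when $r+s+t=d-1$) as we move to a black $3$-clique. Once those index computations are in hand, each of (a), (b), (c) is immediate and the theorem follows.
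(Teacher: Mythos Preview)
The paper does not prove this statement; Theorem~5.22 is quoted verbatim from \cite[Theorem~12.7]{paul} with no argument given. So there is no in-paper proof to compare against, and your proposal should be judged on its own merits.

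Your argument is correct. The key move---using Lemma~3.13(ii) to pass to mutually opposite flags on $W_{d-r}$ and then invoking the decomposition furnished by Lemma~3.9---cleanly handles both the one-dimensionality of each $B_\lambda$ and the directness along $1$-lines in a single stroke, with the $2$- and $3$-lines following by the cyclic symmetry in (ii)--(iv) of Lemma~3.13. For the black $3$-clique, your dimension count is right: with $r+s+t=d-1$, the subspaces $W_{d-r}\cap W'_{d-s}$ and $W_{d-r}\cap W''_{d-t}$ sit at indices $t+1$ and $s+1$ in two opposite flags on the $(d-r+1)$-dimensional space $W_{d-r}$, and the decomposition description from Lemma~3.9 gives their intersection as $V_{d-r-(s+1)}\oplus\cdots\oplus V_{t+1}$, of dimension $(t+1)+(s+1)-(d-r)+1=2$. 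Since all three one-dimensional pieces of the clique lie in this two-dimensional space, their sum cannot be direct. One small stylistic point: the dimension formula is a consequence of the decomposition in Lemma~3.9 rather than an explicit statement there, so you might phrase it as ``by the decomposition of Lemma~3.9 one computes'' rather than ``Lemma~3.9 gives''. Otherwise the proof is complete.
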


\begin{1def}
\cite[Definition~8.1]{paul} By a {\it Concrete Billiard Array} on $V$ we mean a function $\mathcal{B}: \bigtriangleup_{d}\to V, \lambda \mapsto \mathcal{B}_{\lambda}$ that satisfies the following conditions:
\begin{enumerate}
\item[\rm(i)] for each line $L$ in $\bigtriangleup_{d}$ the vectors $\{\mathcal{B}_{\lambda}\}_{\lambda\in L}$ are linearly independent;
\item[\rm(ii)] for each black 3-clique $C$ in $\bigtriangleup_{d}$ the vectors $\{\mathcal{B}_{\lambda}\}_{\lambda\in C}$ are linearly dependent.
\end{enumerate}
We say that $\mathcal{B}$ is {\it over} $\mathbb{F}$. We call $V$ the {\it underlying vector space}. We call $d$ the {\it diameter} of $\mathcal{B}$.
\end{1def}
\begin{exa}
Let $B$ denote a Billiard Array on $V$. For $\lambda\in\bigtriangleup_{d}$ pick $0\ne \mathcal{B}_\lambda\in B_\lambda$. Then the function $\mathcal{B}: \bigtriangleup_{d}\to V, \lambda \mapsto \mathcal{B}_{\lambda}$ is a Concrete Billiard Array on $V$.
\end{exa}
\begin{1def}
Let $B$ denote a Billiard Array on $V$, and let $\mathcal{B}$ denote a concrete Billiard Array on $V$. We say that $B$, $\mathcal{B}$ {\it correspond} whenever $\mathcal{B}_{\lambda}$ spans $B_{\lambda}$ for all $\lambda\in\bigtriangleup_{d}$.
\end{1def}
\begin{1def}
Let $\lambda, \mu$ denote locations in $\bigtriangleup_{d}$ that form an edge. By Lemma 5.13 there exists a unique location $\nu\in \bigtriangleup_{d}$ such that $\lambda, \mu, \nu$ form a black 3-clique. We call $\nu$ the {\it completion} of the edge.
\end{1def}
From now until the end of Definition 5.33, let $B$ denote a Billiard Array on $V$.
\begin{1def}
Let $\lambda, \mu$ denote locations in $\bigtriangleup_{d}$ that form an edge. By a {\it brace} for this edge, we mean a set of nonzero vectors $u\in B_{\lambda}, v\in B_{\mu}$ such that $u+v\in B_{\nu}$. Here $\nu$ denotes the completion of the edge.
\end{1def}

\begin{lem}
\cite[Lemma~13.12]{paul} Let $\lambda, \mu$ denote locations in $\bigtriangleup_{d}$ that form an edge. Then each nonzero $u\in B_{\lambda}$ is contained in a unique brace for this edge.
\end{lem}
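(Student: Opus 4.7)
The plan is to work inside the $2$-dimensional subspace $B_\lambda+B_\mu$, where the completion $\nu$ of the edge $\{\lambda,\mu\}$ places $B_\nu$ as a third $1$-dimensional subspace in ``general position'' with respect to $B_\lambda$ and $B_\mu$. A brace containing $u$ is then essentially a choice of coefficients in the splitting of a fixed nonzero element of $B_\nu$ along $B_\lambda\oplus B_\mu$.

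First I would verify that $B_\lambda, B_\mu, B_\nu$ have pairwise trivial intersection. Using the explicit description of a black $3$-clique in Lemma 5.11, any two of $\lambda,\mu,\nu$ share exactly one coordinate, so each such pair lies on a common line of $\bigtriangleup_d$; then by Definition 5.14(i) the corresponding pairwise sums inside $V$ are direct, giving $B_\lambda\cap B_\mu = B_\lambda\cap B_\nu = B_\mu\cap B_\nu = 0$. In particular $B_\lambda+B_\mu$ is $2$-dimensional. By Lemma 5.15 one has $B_\nu\subseteq B_\lambda+B_\mu$, so fixing a nonzero $w\in B_\nu$ I get a unique expression $w=u_0+v_0$ with $u_0\in B_\lambda$ and $v_0\in B_\mu$. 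If $u_0=0$ then $w=v_0\in B_\mu\cap B_\nu=0$, contradicting $w\ne 0$; symmetrically $v_0\ne 0$.

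For the existence half of the claim, given nonzero $u\in B_\lambda$ there is a unique nonzero scalar $\alpha\in\mathbb{F}$ with $u=\alpha u_0$ (since $B_\lambda=\mathbb{F}u_0$), and then $v:=\alpha v_0\in B_\mu$ is nonzero with $u+v=\alpha w\in B_\nu$, so $(u,v)$ is a brace containing $u$. For uniqueness, if $v,v'\in B_\mu$ are nonzero with $u+v,\,u+v'\in B_\nu$, then $v-v'\in B_\mu\cap B_\nu=0$, so $v=v'$. The main (and essentially only) subtle point is verifying that both $u_0$ and $v_0$ are nonzero, so that a nonzero $u\in B_\lambda$ actually produces a nonzero partner in $B_\mu$ as Definition 5.27 demands; this nondegeneracy is precisely what the three pairwise intersection-zero statements of the first step supply, and it is the place where the line-direct-sum axiom (Definition 5.14(i)) is doing real work.
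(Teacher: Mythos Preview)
Your argument is correct. The paper does not prove this lemma at all; it merely cites \cite[Lemma~13.12]{paul} and moves on, so there is no in-paper proof to compare against. Your approach---showing that any two of $B_\lambda,B_\mu,B_\nu$ lie on a common line of $\bigtriangleup_d$ (hence have zero intersection by Definition~5.14(i)), invoking Lemma~5.15 to get $B_\nu\subseteq B_\lambda\oplus B_\mu$, and then scaling a fixed decomposition of a nonzero $w\in B_\nu$---is exactly the natural linear-algebra argument and is self-contained within the definitions the present paper records. The one place worth a word of care is uniqueness: you use that $B_\nu$ is a subspace so that $(u+v)-(u+v')\in B_\nu$, which is fine; everything else is airtight.
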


\begin{1def}
\cite[Definition~14.1]{paul} Let $\lambda, \mu$ denote adjacent locations in $\bigtriangleup_{d}$. We define an $\mathbb{F}$-linear map $\widetilde{B}_{\lambda,\mu}: B_{\lambda}\to B_{\mu}$ as follows. For each brace $u\in B_{\lambda}, v\in B_{\mu}$, the map $\widetilde{B}_{\lambda,\mu}$ sends $u\mapsto v$. The map $\widetilde{B}_{\lambda,\mu}$ is well defined by Lemma 5.28. Observe that $\widetilde{B}_{\lambda,\mu}: B_{\lambda}\to B_{\mu}$ is bijective.
\end{1def}
\begin{1def}
\cite[Definition~14.9]{paul} Let $\lambda, \mu, \nu$ denote locations in $\bigtriangleup_{d}$ that form a white 3-clique. Then the composition
\begin{equation*}
B_{\lambda}\stackrel{\widetilde{B}_{\lambda,\mu}}{\longrightarrow}B_{\mu}\stackrel{\widetilde{B}_{\mu,\nu}}{\longrightarrow}B_{\nu}\stackrel{\widetilde{B}_{\nu,\lambda}}{\longrightarrow}B_{\lambda}
\end{equation*}
is a nonzero scalar multiple of the identity map on $B_{\lambda}$. The scalar is called the {\it clockwise $B$-value} (resp. {\it counterclockwise $B$-value}) of the clique whenever $\lambda, \mu, \nu$ runs clockwise (resp. counterclockwise) around the clique.
\end{1def}

\begin{1def}
For each white 3-clique in $\bigtriangleup_{d}$, by its {\it $B$-value} we mean its clockwise $B$-value.
\end{1def}
\begin{1def}
By a {\it value function} on $\bigtriangleup_{d}$, we mean a function $\psi:  \bigtriangleup_{d} \to \mathbb{F}\setminus\{0\}$.
\end{1def}
\begin{1def}
\cite[Definition~14.13]{paul} Assume $d\ge 2$. We define a function $\widehat{B}:  \bigtriangleup_{d-2} \to \mathbb{F}$ as follows. Pick $(r,s,t)\in \bigtriangleup_{d-2}$. To describe the image of $(r,s,t)$ under $\widehat{B}$, consider the corresponding white 3-clique in $\bigtriangleup_{d}$ from Lemma 5.12. The $B$-value of this 3-clique is the image of $(r,s,t)$ under $\widehat{B}$. Observe that $\widehat{B}$ is a value function on $\bigtriangleup_{d-2}$ in the sense of Definition 5.32. We call $\widehat{B}$ the {\it value function} for B.
\end{1def}
\begin{1def}
Let $BA_d(\mathbb{F})$ denote the set of isomorphism classes of Billiard Arrays over $\mathbb{F}$ that have diameter $d$.
\end{1def}
\begin{1def}
Let $VF_d(\mathbb{F})$ denote the set of value functions on $\bigtriangleup_{d}$.
\end{1def}
\begin{1def}
Assume $d\ge 2$. We now define a map $\theta:BA_d(\mathbb{F})\to VF_{d-2}(\mathbb{F})$. For $B\in BA_d(\mathbb{F})$, the image of $B$ under $\theta$ is the value function $\widehat{B}$ from Definition 5.33.
\end{1def}
\begin{lem}
\cite[Lemma~19.1]{paul} Assume $d\ge 2$. Then the map $\theta:BA_d(\mathbb{F})\to VF_{d-2}(\mathbb{F})$ from Definition 5.36 is bijective.
\end{lem}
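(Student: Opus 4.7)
The plan is to establish three things: (a) the map $\theta$ is well-defined on isomorphism classes, (b) it is surjective, and (c) it is injective. The central tool is Example 5.24, which lets me replace each Billiard Array $B$ by a corresponding concrete Billiard Array $\mathcal{B}$, reducing statements about one-dimensional subspaces to statements about choices of spanning vectors. The bookkeeping is carried by the brace relations (Lemma 5.28) and the edge maps $\widetilde{B}_{\lambda,\mu}$ (Definition 5.29). For (a), if $\sigma:V\to V'$ is an isomorphism from $B$ to $B'$, then $\sigma$ sends each brace of $B$ to a brace of $B'$, so it intertwines $\widetilde{B}_{\lambda,\mu}$ with $\widetilde{B'}_{\lambda,\mu}$. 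The composition around a white $3$-clique is thereby conjugated by a bijection of one-dimensional spaces, so the scalar is preserved and $\widehat{B}=\widehat{B'}$.

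For surjectivity, given $\psi\in VF_{d-2}(\mathbb{F})$, I would construct a concrete Billiard Array $\mathcal{B}$ on $V$ whose associated Billiard Array $B$ satisfies $\widehat{B}=\psi$. Start by freely choosing nonzero vectors along two adjacent sides of $\bigtriangleup_d$ (say the $3$-boundary and the $2$-boundary), subject to the brace relations on the black $3$-cliques that lie inside those boundaries. Then sweep inward, processing the black $3$-cliques of $\bigtriangleup_d$ in an order compatible with Lemma 5.11: at each step two vertices of the clique already carry vectors, and the brace relation (Lemma 5.28) determines the line on which the third vertex must lie; I fix its scale so that the clockwise composition of edge maps around the newly completed white $3$-clique (identified via Lemma 5.12) equals the prescribed scalar $\psi(r,s,t)$. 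An induction on $d$ should verify that this is consistent and defines a genuine concrete Billiard Array, and hence a Billiard Array $B$ with $\widehat{B}=\psi$.

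For injectivity, suppose $\widehat{B}=\widehat{B'}$. Applying Lemma 5.20 to each side, the $B$-flags and the $B'$-flags form totally opposite triples; Theorem 5.21 characterizes each Billiard Array in terms of these flags, so after an $\mathbb{F}$-linear bijection of $V$ I may assume the two triples coincide, and then choose concrete realizations $\mathcal{B},\mathcal{B}'$ that agree on a chosen pair of boundary sides. The same inward sweep as in the surjectivity step, with no remaining freedom because both the boundary data and all $B$-values are now prescribed, forces $\mathcal{B}_\lambda$ and $\mathcal{B}'_\lambda$ to span the same subspace at every interior $\lambda$, hence $B=B'$. The main obstacle throughout is the combinatorial consistency of this sweep: the graph of adjacencies on $\bigtriangleup_d$ contains many small cycles, and the content of Lemma 5.37 is that the white $3$-cliques carry \emph{exactly} the data needed to resolve every such cycle without redundancy or overdetermination. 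Verifying this cleanly will likely require an induction on $d$ organized around Lemma 5.11 and Lemma 5.12, together with a careful tracking of how the brace relations propagate across each new layer of black cliques.
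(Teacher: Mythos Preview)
The paper does not actually prove this lemma: it is quoted verbatim as \cite[Lemma~19.1]{paul} and used as a black box. So there is no ``paper's own proof'' to compare against; any argument you supply is necessarily a different route from what the present paper does, and a fair comparison would require looking at the cited source.

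That said, your outline has the right shape but leaves real work undone. For surjectivity, the delicate point is not the brace-by-brace propagation of vectors but verifying condition~(i) of Definition~5.14: after your sweep you must show that along \emph{every} line of $\bigtriangleup_d$ the resulting one-dimensional subspaces are independent. Your sketch does not address this, and it is exactly where the nonvanishing of the prescribed values $\psi(r,s,t)$ must be used in an essential way. For injectivity, the step ``after an $\mathbb{F}$-linear bijection of $V$ I may assume the two triples coincide'' is doing a lot of work; two totally opposite triples of flags need not be $GL(V)$-conjugate in general, so you would have to argue this from the equality $\widehat{B}=\widehat{B'}$ rather than assume it. A cleaner route, closer in spirit to how such results are typically proved, is to fix boundary data for a standard concrete realization (as in Lemma~5.46 of this paper) and show by the same layer-by-layer induction that the interior vectors are then \emph{uniquely} determined by the $B$-values; this handles injectivity and surjectivity simultaneously once the line-independence is checked.
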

\begin{1def}
Let $\mathcal{B}$ denote a Concrete Billiard Array on $V$. Let $B$ denote the corresponding Billiard Array on $V$ from Definition 5.25. Let $\lambda, \mu$ denote adjacent locations in $\bigtriangleup_{d}$. Recall the bijection $\widetilde{B}_{\lambda,\mu}: B_{\lambda}\to B_{\mu}$ from Definition 5.29. Recall that $\mathcal{B}_{\lambda}$ is a basis for $B_{\lambda}$ and $\mathcal{B}_{\mu}$ is a basis for $B_{\mu}$. Define a scalar $\widetilde{\mathcal{B}}_{\lambda,\mu}\in \mathbb{F}$ such that $\widetilde{B}_{\lambda,\mu}$ sends $\mathcal{B}_{\lambda}\mapsto \widetilde{\mathcal{B}}_{\lambda,\mu}\mathcal{B}_{\mu}$. Note that $\widetilde{\mathcal{B}}_{\lambda,\mu}\ne 0$.
\end{1def}
\begin{lem}
\cite[Lemma~15.6]{paul} Let $\mathcal{B}$ denote a Concrete Billiard Array on $V$. Let $\lambda, \mu, \nu$ denote locations in $\bigtriangleup_{d}$ that form a black 3-clique. Then
\begin{equation*}
\mathcal{B}_{\lambda}+\widetilde{\mathcal{B}}_{\lambda,\mu}\mathcal{B}_{\mu}+\widetilde{\mathcal{B}}_{\lambda,\nu}\mathcal{B}_{\nu}=0.
\end{equation*}
\end{lem}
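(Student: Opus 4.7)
The plan is to extract the claimed identity from the linear dependence on $\mathcal{B}_{\lambda},\mathcal{B}_{\mu},\mathcal{B}_{\nu}$ forced by condition (ii) of Definition 5.23. Since $\{\lambda,\mu,\nu\}$ is a black $3$-clique, that condition furnishes scalars $a,b,c\in\mathbb{F}$, not all zero, with
\begin{equation*}
a\mathcal{B}_{\lambda}+b\mathcal{B}_{\mu}+c\mathcal{B}_{\nu}=0.
\end{equation*}
The strategy is to show that $a$ is nonzero, and then to recognize $b/a$ and $c/a$ as the brace scalars $\widetilde{\mathcal{B}}_{\lambda,\mu}$ and $\widetilde{\mathcal{B}}_{\lambda,\nu}$, so that dividing the dependence through by $a$ yields exactly the target equation.

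To control the coefficients, I would invoke Lemma 5.11 to write the clique as $\lambda=(r+1,s,t)$, $\mu=(r,s+1,t)$, $\nu=(r,s,t+1)$. Any two of these locations share a coordinate, so each pair is collinear in the sense of Definition 5.6 and lies on a common line of $\bigtriangleup_{d}$. Condition (i) of Definition 5.23 then forces each of $\{\mathcal{B}_{\lambda},\mathcal{B}_{\mu}\}$, $\{\mathcal{B}_{\mu},\mathcal{B}_{\nu}\}$, $\{\mathcal{B}_{\lambda},\mathcal{B}_{\nu}\}$ to be linearly independent, which in combination with the displayed dependence rules out any coefficient being zero. Rearranging as $\mathcal{B}_{\lambda}+(b/a)\mathcal{B}_{\mu}=-(c/a)\mathcal{B}_{\nu}$ exhibits nonzero vectors in $B_{\lambda}$ and $B_{\mu}$ whose sum is a nonzero vector in $B_{\nu}$; since $\nu$ is the completion of the edge $\{\lambda,\mu\}$, the pair $(\mathcal{B}_{\lambda},(b/a)\mathcal{B}_{\mu})$ is a brace in the sense of Definition 5.27. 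Definitions 5.29 and 5.38 then identify $\widetilde{\mathcal{B}}_{\lambda,\mu}=b/a$, and the symmetric rearrangement $\mathcal{B}_{\lambda}+(c/a)\mathcal{B}_{\nu}=-(b/a)\mathcal{B}_{\mu}$ gives $\widetilde{\mathcal{B}}_{\lambda,\nu}=c/a$. Dividing the original dependence through by $a$ now produces the asserted identity.

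The only real subtlety I anticipate is the collinearity observation for pairs of vertices of a black $3$-clique, which is what converts condition (i) of Definition 5.23 into pairwise linear independence of the concrete vectors. Once that is recorded, the rest is formal, since a single dependence relation simultaneously encodes both brace scalars and the desired identity.
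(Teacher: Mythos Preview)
The paper does not supply its own proof of this lemma; it is quoted from \cite[Lemma~15.6]{paul} without argument. So there is no in-paper proof to compare your proposal against.

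That said, your argument is correct and is essentially the natural proof one would expect to find in the cited source. The linear dependence from Definition~5.23(ii), the pairwise independence from Definition~5.23(i) via collinearity of adjacent locations, and the identification of $b/a$ and $c/a$ with the brace scalars through Definitions~5.27, 5.29, and~5.38 are exactly the right ingredients, and you have assembled them correctly. One minor remark: you do not really need the explicit parametrization from Lemma~5.11 to see that any two vertices of a black $3$-clique are collinear---this already follows from Definition~5.8, since adjacent locations differ by some $e_i-e_j$ and hence share their $k$-th coordinate for the remaining index $k$---but invoking Lemma~5.11 is harmless and perhaps clearer.
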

\begin{lem}
\cite[Lemma~15.9]{paul} Let $\mathcal{B}$ denote a Concrete Billiard Array on $V$. Let $B$ denote the corresponding Billiard Array on $V$ from Definition 5.25. Let $\lambda, \mu, \nu$ denote the locations in $\bigtriangleup_{d}$ that form a white 3-clique. Then the clockwise (resp. counterclockwise) $B$-value of the clique is equal to
\begin{equation*}
\widetilde{\mathcal{B}}_{\lambda,\mu}\widetilde{\mathcal{B}}_{\mu,\nu}\widetilde{\mathcal{B}}_{\nu, \lambda}
\end{equation*}
whenever the sequence $\lambda, \mu, \nu$ runs clockwise (resp. counterclockwise) around the clique.
\end{lem}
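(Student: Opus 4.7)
The plan is to prove this by direct computation: trace the action of the composition of maps defining the $B$-value on the basis vector $\mathcal{B}_{\lambda}$ of $B_{\lambda}$, using the scalars supplied by Definition 5.38.

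First I would fix an orientation and assume $\lambda, \mu, \nu$ runs clockwise around the white 3-clique. By Definition 5.30, the clockwise $B$-value is the unique scalar $\kappa \in \mathbb{F}$ such that
\begin{equation*}
\widetilde{B}_{\nu,\lambda}\circ\widetilde{B}_{\mu,\nu}\circ\widetilde{B}_{\lambda,\mu}=\kappa\cdot\mathrm{id}_{B_\lambda}.
\end{equation*}
Since $B_\lambda$ is one-dimensional and spanned by $\mathcal{B}_\lambda$ (by Definition 5.25 and Example 5.24), it suffices to evaluate the left-hand side on $\mathcal{B}_\lambda$ and read off the coefficient of $\mathcal{B}_\lambda$.

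Next I would apply Definition 5.38 three times in succession. The first map sends $\mathcal{B}_\lambda \mapsto \widetilde{\mathcal{B}}_{\lambda,\mu}\mathcal{B}_\mu$. By $\mathbb{F}$-linearity of $\widetilde{B}_{\mu,\nu}$ and Definition 5.38 applied to the edge $\{\mu,\nu\}$, the second map sends $\widetilde{\mathcal{B}}_{\lambda,\mu}\mathcal{B}_\mu \mapsto \widetilde{\mathcal{B}}_{\lambda,\mu}\widetilde{\mathcal{B}}_{\mu,\nu}\mathcal{B}_\nu$. Applying the third map in the same way yields
\begin{equation*}
\widetilde{\mathcal{B}}_{\lambda,\mu}\widetilde{\mathcal{B}}_{\mu,\nu}\widetilde{\mathcal{B}}_{\nu,\lambda}\mathcal{B}_\lambda.
\end{equation*}
Comparing with $\kappa\mathcal{B}_\lambda$ gives the claimed formula for the clockwise $B$-value. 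For the counterclockwise version, I would repeat the same argument with the composition $\widetilde{B}_{\mu,\lambda}\circ\widetilde{B}_{\nu,\mu}\circ\widetilde{B}_{\lambda,\nu}$, which governs the counterclockwise $B$-value by Definition 5.30; the computation produces the product $\widetilde{\mathcal{B}}_{\lambda,\nu}\widetilde{\mathcal{B}}_{\nu,\mu}\widetilde{\mathcal{B}}_{\mu,\lambda}$, which is exactly $\widetilde{\mathcal{B}}_{\lambda,\mu}\widetilde{\mathcal{B}}_{\mu,\nu}\widetilde{\mathcal{B}}_{\nu,\lambda}$ with the sequence $\lambda,\mu,\nu$ read counterclockwise.

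There is no real obstacle here: the proof is a pure unwinding of the definitions, and the only subtle point is making sure that the edge maps $\widetilde{B}_{\lambda,\mu}$ from Definition 5.29 are well defined on the span of $\mathcal{B}_\lambda$ by $\mathbb{F}$-linearity, so that the scaling constants pull out cleanly as in the display above. The two halves of the statement differ only in the order in which the three edge scalars are composed, and commutativity of multiplication in $\mathbb{F}$ makes both products equal as a scalar while the geometric interpretation (clockwise vs.\ counterclockwise traversal) records which cyclic order of $\lambda,\mu,\nu$ produced the product.
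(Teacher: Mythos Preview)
Your argument is correct: the result follows immediately by applying Definition~5.38 three times to the vector $\mathcal{B}_\lambda$ and comparing with Definition~5.30. The paper itself gives no proof of this lemma---it is simply quoted from \cite[Lemma~15.9]{paul}---so there is nothing to compare against, but your direct unwinding of the definitions is exactly the intended argument.

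One small remark on your final paragraph: the sentence about commutativity making ``both products equal as a scalar'' is a bit misleading. The clockwise and counterclockwise $B$-values are in general \emph{different} scalars (they are reciprocals), because $\widetilde{\mathcal{B}}_{\lambda,\mu}$ and $\widetilde{\mathcal{B}}_{\mu,\lambda}$ are distinct quantities. What commutativity actually buys you is that the product $\widetilde{\mathcal{B}}_{\lambda,\mu}\widetilde{\mathcal{B}}_{\mu,\nu}\widetilde{\mathcal{B}}_{\nu,\lambda}$ is invariant under cyclic permutation of $\lambda,\mu,\nu$ (so the choice of starting vertex does not matter), not that the two orientations give the same answer. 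Your computation is fine; only the explanatory gloss needs tightening.
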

Next we consider the $2$-boundary of $\bigtriangleup_{d}$.
\begin{pro}
Given a Billiard Array $B$ on $V$, let $\{V_i\}_{i=0}^d$ denote the $B$-decomposition $[1,3]$ of $V$ from Definition 5.18. Then for $\lambda=(r,s,t)\in \bigtriangleup_{d}$,
\begin{equation}
B_{\lambda}\subseteq V_t+V_{t+1}+\dots+V_{d-r}.
\end{equation}
\end{pro}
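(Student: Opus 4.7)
The plan is to express the right-hand side of (5.13) as an intersection of two components of $B$-flags and then use Lemma 5.21 to realize $B_{\lambda}$ as a subspace of that intersection.

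First I would unpack the definition. By Definition 5.18 with $(\eta,\xi)=(1,3)$, we have $V_i = B_{(d-i,0,i)}$ for $0\le i\le d$. By Lemma 5.20, the decomposition $\{V_i\}_{i=0}^d$ induces the $B$-flag $[1]$; in particular, for $0\le j\le d$ the $j$-component of the $B$-flag $[1]$ equals $V_0+V_1+\cdots+V_j$. Similarly, $B$-decomposition $[3,1]$ has $i$-component $B_{(i,0,d-i)}=V_{d-i}$, so it is the inversion of $\{V_i\}_{i=0}^d$ and (by Lemma 5.20 with $(\eta,\xi)=(3,1)$) induces the $B$-flag $[3]$; hence the $j$-component of the $B$-flag $[3]$ equals $V_d+V_{d-1}+\cdots+V_{d-j}$.

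Now fix $\lambda=(r,s,t)\in\bigtriangleup_d$, so $r+s+t=d$ and in particular $t\le d-r$. Setting $j=d-r$ in the first identity, the $(d-r)$-component of the $B$-flag $[1]$ is
\begin{equation*}
V_0+V_1+\cdots+V_{d-r}.
\end{equation*}
Setting $j=d-t$ in the second identity, the $(d-t)$-component of the $B$-flag $[3]$ is
\begin{equation*}
V_t+V_{t+1}+\cdots+V_d.
\end{equation*}
Since $V=\bigoplus_{i=0}^d V_i$, the intersection of these two subspaces is obtained by intersecting the index sets $\{0,1,\dots,d-r\}$ and $\{t,t+1,\dots,d\}$, giving
\begin{equation*}
\bigl(V_0+\cdots+V_{d-r}\bigr)\cap\bigl(V_t+\cdots+V_d\bigr)=V_t+V_{t+1}+\cdots+V_{d-r}.
\end{equation*}

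Finally, by Lemma 5.21, $B_{\lambda}$ is contained in both the $(d-r)$-component of the $B$-flag $[1]$ and the $(d-t)$-component of the $B$-flag $[3]$. Therefore $B_{\lambda}$ is contained in their intersection, which is exactly $V_t+V_{t+1}+\cdots+V_{d-r}$, proving (5.13). The only mild subtlety is verifying that the inversion of the $B$-decomposition $[1,3]$ is the $B$-decomposition $[3,1]$, so that Lemma 5.20 gives the desired description of the $B$-flag $[3]$; once this is noted, the rest is a direct-sum bookkeeping argument.
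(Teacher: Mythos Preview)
Your argument is correct, but it follows a genuinely different route from the paper's proof. The paper argues by induction on $s$: for $s=0$ one has $B_\lambda=V_t$ directly from Definition~5.18, and for $s>0$ one applies Lemma~5.15 to the black 3-clique $\{(r,s,t),(r,s-1,t+1),(r+1,s-1,t)\}$ to get $B_\lambda\subseteq B_\mu+B_\nu$, then invokes the inductive hypothesis on $B_\mu$ and $B_\nu$. Your approach instead bypasses induction entirely by invoking Lemma~5.21 to place $B_\lambda$ inside two flag components, identifying those components via Lemma~5.19 (you wrote Lemma~5.20, but the statement ``the $B$-decomposition $[\eta,\xi]$ induces the $B$-flag $[\eta]$'' is Lemma~5.19; Lemma~5.20 is the total-opposite claim) as partial sums of the same decomposition, and then intersecting using the direct-sum structure. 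Your route is cleaner and more conceptual, at the cost of leaning on the somewhat heavier Lemma~5.21; the paper's induction is more hands-on and stays closer to the combinatorics of black 3-cliques, which mirrors how the standard Concrete Billiard Array is built in Lemma~5.46.
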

\begin{proof}
We do induction on $s$. First assume that $s=0$. Then by Definition 5.18, $B_{\lambda}=V_t$. Next assume that $s>0$. Consider the black 3-clique in $\bigtriangleup_{d}$ with the locations $\lambda=(r,s,t), \mu=(r,s-1,t+1), \nu=(r+1,s-1,t)$. By induction,
\begin{eqnarray}
{B}_{\mu}\subseteq V_{t+1}+V_{t+2}+\dots+V_{d-r};\\
{B}_{\nu}\subseteq V_{t}+V_{t+1}+\dots+V_{d-r-1}.
\end{eqnarray}
By Lemma 5.15,
\begin{equation}
B_{\lambda}\subseteq {B}_{\mu}+{B}_{\nu}.
\end{equation}
The equation (5.2) follows from (5.3)--(5.5).
\end{proof}
\begin{lem}
Let $B$ denote a Billiard Array on $V$. Let $\{u_i\}_{i=0}^d$ $($resp. $\{v_i\}_{i=0}^d$$)$ denote a basis of $V$ that induces the $B$-decomposition $[1,2]$ $($resp. $B$-decomposition $[1,3]$$)$. Then the transition matrices between $\{u_i\}_{i=0}^d$ and $\{v_i\}_{i=0}^d$ are upper triangular.
\end{lem}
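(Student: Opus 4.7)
The plan is to derive the claim as a direct consequence of Proposition 5.40 together with the observation that the inverse of an invertible upper triangular matrix is upper triangular.

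First I would unpack what the hypothesis says using Definition 5.18. Since $\{u_i\}_{i=0}^d$ induces the $B$-decomposition $[1,2]$, the line $u_j$ spans $B_{(d-j,j,0)}$. Since $\{v_i\}_{i=0}^d$ induces the $B$-decomposition $[1,3]$, the line $v_i$ spans $B_{(d-i,0,i)}$, so in the notation of Proposition 5.40, $V_i=\mathbb{F}v_i$ for $0\le i\le d$.

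Next I would apply Proposition 5.40 to the location $\lambda=(d-j,j,0)$. There one has $r=d-j$, $s=j$, $t=0$, so Proposition 5.40 yields
\begin{equation*}
B_{(d-j,j,0)}\subseteq V_0+V_1+\cdots+V_j=\mathbb{F}v_0+\mathbb{F}v_1+\cdots+\mathbb{F}v_j.
\end{equation*}
Because $u_j$ lies in $B_{(d-j,j,0)}$, this forces $u_j\in\mathbb{F}v_0+\cdots+\mathbb{F}v_j$ for $0\le j\le d$. Equivalently, if we let $S\in\mathrm{Mat}_{d+1}(\mathbb{F})$ denote the transition matrix from $\{v_i\}_{i=0}^d$ to $\{u_i\}_{i=0}^d$, defined by $u_j=\sum_{i=0}^d S_{ij}v_i$, then $S_{ij}=0$ whenever $i>j$. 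Thus $S$ is upper triangular.

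Finally, $S$ is invertible since $\{u_i\}_{i=0}^d$ is a basis, and the inverse of an invertible upper triangular matrix is upper triangular. Hence $S^{-1}$, which is the transition matrix from $\{u_i\}_{i=0}^d$ to $\{v_i\}_{i=0}^d$, is also upper triangular. This gives both transition matrices between the two bases, completing the proof. The only nontrivial ingredient is Proposition 5.40, which was already established; once that is in hand, there is no real obstacle and the argument is a short bookkeeping exercise with the corner conventions of Definition 5.18.
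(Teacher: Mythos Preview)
Your proof is correct and follows exactly the paper's approach: the paper's proof is the single line ``By Proposition 5.41,'' and you have simply unpacked that reference by applying the containment $B_{(d-j,j,0)}\subseteq V_0+\cdots+V_j$ and reading off the triangularity of the transition matrix (and its inverse). Note only that you cite the containment result as ``Proposition~5.40'' when in the paper it is Proposition~5.41; Lemma~5.40 is about $B$-values of white $3$-cliques and is not what you are using.
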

\begin{proof}
By Proposition 5.41.
\end{proof}
We next show that the transition matrices in Lemma 5.42 are very good in the sense of Definition 4.4. By Corollary 4.13, it suffices to show that the transition matrix from $\{u_i\}_{i=0}^d$ to $\{v_i\}_{i=0}^d$ is very good.
\begin{lem}
Referring to Lemma 5.42, let $T$ denote the transition matrix from $\{u_i\}_{i=0}^d$ to $\{v_i\}_{i=0}^d$. Then $T$ is very good in the sense of Definition 4.4.
\end{lem}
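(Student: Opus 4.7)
The plan is to invoke Proposition 4.12, which says $T$ is very good if and only if the three flags $\{U_i\}, \{U_i'\}, \{U_i''\}$ constructed from $T$ via (4.5)--(4.7) are totally opposite. We will identify these three flags with the $B$-flags $[1], [2], [3]$, and then appeal to Lemma 5.20, which states that the three $B$-flags are totally opposite.

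First I would identify the flag $\{U_i\}$. The basis $\{u_i\}_{i=0}^d$ induces the $B$-decomposition $[1,2]$, and by Lemma 5.19 this decomposition induces the $B$-flag $[1]$. Hence by Definition 3.8 the flag induced by $\{u_i\}_{i=0}^d$, namely $\{U_i\}_{i=0}^d$, coincides with the $B$-flag $[1]$. (The same conclusion holds using $\{v_i\}_{i=0}^d$ and the $B$-decomposition $[1,3]$, which is consistent with the equality in (4.5).)

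Next I would identify $\{U_i'\}$ and $\{U_i''\}$ similarly. The inversion $\{u_{d-i}\}_{i=0}^d$ induces the inversion of the $B$-decomposition $[1,2]$, whose $i$-component is $B_{(i,d-i,0)}$; by the table in Definition 5.18 this inversion is precisely the $B$-decomposition $[2,1]$, which by Lemma 5.19 induces the $B$-flag $[2]$. Therefore $\{U_i'\}_{i=0}^d$ equals the $B$-flag $[2]$. An identical argument, using $\{v_{d-i}\}_{i=0}^d$ and the $B$-decomposition $[3,1]$, shows that $\{U_i''\}_{i=0}^d$ equals the $B$-flag $[3]$.

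Finally, by Lemma 5.20 the $B$-flags $[1], [2], [3]$ are totally opposite in the sense of Definition 3.12, so the three flags $\{U_i\}, \{U_i'\}, \{U_i''\}$ are totally opposite. Proposition 4.12 then yields that $T$ is very good. The only potential obstacle is bookkeeping: making sure that the indexing conventions in Definition 5.18 (which labels the decompositions by ordered pairs $[\eta,\xi]$) correctly match the inversions appearing in (4.6)--(4.7); once those match-ups are verified against the table, the proof is immediate.
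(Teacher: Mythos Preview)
Your proposal is correct and follows essentially the same approach as the paper: identify $\{U_i\}, \{U_i'\}, \{U_i''\}$ with the $B$-flags $[1],[2],[3]$ via Lemma~5.19, invoke Lemma~5.20 to get total opposition, and then apply Proposition~4.12. Your write-up is in fact a bit more explicit than the paper's in checking the inversion bookkeeping against the table in Definition~5.18, but the argument is the same.
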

\begin{proof}
Consider the corresponding three flags $\{U_{i}\}_{i=0}^{d}$, $\{U_{i}^{\prime}\}_{i=0}^{d}$, $\{U_{i}^{\prime\prime}\}_{i=0}^{d}$ on $V$ from (4.5)--(4.7). By Lemma 5.19, the $B$-flag $[1]$ (resp. $[2]$) (resp. $[3]$) is the flag $\{U_i\}_{i=0}^d$ (resp. $\{U_i^\prime\}_{i=0}^d$) (resp. $\{U_i^{\prime\prime}\}_{i=0}^d$). By Lemma 5.20, the three flags $\{U_{i}\}_{i=0}^{d}$, $\{U_{i}^{\prime}\}_{i=0}^{d}$, $\{U_{i}^{\prime\prime}\}_{i=0}^{d}$ are totally opposite. By Proposition 4.12, $T$ is very good.
\end{proof}
Recall the set $\mathcal{T}_d(\mathbb{F})$ from Definition 4.14.
\begin{cor}
Referring to Lemma 5.43, $T\in \mathcal{T}_d(\mathbb{F})$.
\end{cor}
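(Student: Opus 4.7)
The plan is essentially to combine the two lemmas immediately preceding the corollary with the definition of $\mathcal{T}_d(\mathbb{F})$. Recall that by Definition 4.14, $\mathcal{T}_d(\mathbb{F})$ is exactly the set of upper triangular matrices in ${\rm Mat}_{d+1}(\mathbb{F})$ that are very good in the sense of Definition 4.4. So the task reduces to verifying both properties for the transition matrix $T$ from Lemma 5.43.

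First, I would invoke Lemma 5.42, which states that the transition matrix between a basis inducing the $B$-decomposition $[1,2]$ and a basis inducing the $B$-decomposition $[1,3]$ is upper triangular; in particular $T$ is upper triangular. Second, I would invoke Lemma 5.43 itself, which establishes that $T$ is very good. Combining these two facts with Definition 4.14 yields $T \in \mathcal{T}_d(\mathbb{F})$.

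There is no real obstacle here: the corollary is a packaging statement that records the upshot of Lemmas 5.42 and 5.43 in the language of $\mathcal{T}_d(\mathbb{F})$. The only thing to be careful about is citing the correct sources for each of the two properties (upper triangularity from Lemma 5.42, and very-goodness from Lemma 5.43, which in turn relied on Proposition 4.12 applied to the totally opposite flags $\{U_i\}$, $\{U_i'\}$, $\{U_i''\}$ obtained via Lemmas 5.19 and 5.20).
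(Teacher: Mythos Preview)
Your proposal is correct and matches the paper's own proof exactly: the paper simply cites Lemmas 5.42 and 5.43, combining upper triangularity with very-goodness to conclude $T\in\mathcal{T}_d(\mathbb{F})$ via Definition 4.14.
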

\begin{proof}
By Lemmas 5.42, 5.43.
\end{proof}
\begin{1def}
For location $\tau=(r,s,t)\in \bigtriangleup_{d-1}$, consider the corresponding black 3-clique in $\bigtriangleup_{d}$ from Lemma 5.11, with locations $\lambda=(r+1,s,t), \mu=(r,s+1,t), \nu=(r,s,t+1)$. Given a Concrete Billiard Array $\mathcal{B}$ on $V$, we say that $\mathcal{B}$ is {\it $\tau$-standard} whenever $\mathcal{B}_{\lambda}-\mathcal{B}_{\mu}\in \mathbb{F}\mathcal{B}_{\nu}$. We call $\mathcal{B}$ {\it standard} whenever $\mathcal{B}$ is $\tau$-standard for all $\tau\in \bigtriangleup_{d-1}$.
\end{1def}
Let $B$ denote a Billiard Array on $V$. For $0\le i \le d$, pick $0\ne f_i\in B_\kappa$ where $\kappa=(d-i,0,i)\in \bigtriangleup_{d}$. Observe that $\{f_i\}_{i=0}^{d}$ is a basis of $V$ that induces the $B$-decomposition $[1,3]$.
\begin{lem}
With the above notation, there exists a unique standard Concrete Billiard Array $\mathcal{B}$ on $V$ such that
\begin{enumerate}
\item[\rm(i)] $\mathcal{B}$ corresponds to $B$ in the sense of Definition 5.25;
\item[\rm(ii)] for $0\le i \le d$, $\mathcal{B}_{\kappa}=f_i$ where $\kappa=(d-i,0,i)\in \bigtriangleup_{d}$.
\end{enumerate}
\end{lem}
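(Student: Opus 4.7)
The plan is to construct $\mathcal{B}$ by induction on the $2$-coordinate $s$ of locations in $\bigtriangleup_d$. The base case $s=0$ is forced by (ii): set $\mathcal{B}_{(d-i,0,i)}=f_i$ for $0\le i\le d$. For the inductive step, fix $s\ge 1$ and suppose $\mathcal{B}_\mu$ has been defined as a nonzero element of $B_\mu$ for all $\mu\in\bigtriangleup_d$ with $2$-coordinate less than $s$. For $\lambda=(r,s,t)\in\bigtriangleup_d$, consider $\tau=(r,s-1,t)\in\bigtriangleup_{d-1}$; the corresponding black $3$-clique from Lemma 5.11 consists of $(r+1,s-1,t)$, $\lambda$, and $(r,s-1,t+1)$, so the $\tau$-standard condition reads
\[
\mathcal{B}_{(r+1,s-1,t)}-\mathcal{B}_{\lambda}\in \mathbb{F}\,\mathcal{B}_{(r,s-1,t+1)},
\]
with both vectors flanking $\mathcal{B}_\lambda$ already available by induction.

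To turn this into a definition of $\mathcal{B}_\lambda$, I appeal to Lemma 5.15, which yields $B_{(r+1,s-1,t)}\subseteq B_\lambda+B_{(r,s-1,t+1)}$. The two $1$-dimensional summands intersect trivially because $\lambda$ and $(r,s-1,t+1)$ share their $1$-coordinate and the Billiard Array line axiom makes every line-sum direct. Thus there exist a unique $v\in B_\lambda$ and a unique $c\in\mathbb{F}$ with $\mathcal{B}_{(r+1,s-1,t)}=v+c\,\mathcal{B}_{(r,s-1,t+1)}$; define $\mathcal{B}_\lambda=v$. Nonzeroness of $v$ follows from the same axiom applied to the $2$-line through $(r+1,s-1,t)$ and $(r,s-1,t+1)$: were $v=0$, then $\mathcal{B}_{(r+1,s-1,t)}$ would lie in $B_{(r,s-1,t+1)}$, contradicting directness.

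The remaining verifications are routine. By construction each $\mathcal{B}_\lambda$ is a nonzero element of $B_\lambda$, so $\mathcal{B}$ is a Concrete Billiard Array (Example 5.24) that corresponds to $B$ (Definition 5.25); (ii) holds by the base case. The standardness of $\mathcal{B}$ is automatic since for every $\tau'=(a,b,c)\in\bigtriangleup_{d-1}$ the value $\mathcal{B}_{(a,b+1,c)}$ was constructed precisely from the $\tau'$-standard relation. Uniqueness is built in: (ii) pins down the $2$-boundary values, and at each subsequent stage the unique decomposition above leaves no choice. The main subtlety is the two-fold use of the line axiom---once for uniqueness of the decomposition and once to force $v\neq 0$---but both invocations are immediate from the Billiard Array definition.
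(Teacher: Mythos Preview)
Your proof is correct and follows essentially the same approach as the paper: both argue by induction on the $2$-coordinate $s$, use the black $3$-clique $\{(r+1,s-1,t),(r,s,t),(r,s-1,t+1)\}$ corresponding to $\tau=(r,s-1,t)$, and define $\mathcal{B}_{(r,s,t)}$ so that the $\tau$-standard relation holds. The only cosmetic difference is that the paper packages the inductive step as an application of the brace Lemma~5.28, whereas you unpack that lemma directly via Lemma~5.15 and the line axiom of Definition~5.14; the content is identical.
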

\begin{proof}
First we show that $\mathcal{B}$ exists. Let $\lambda=(r,s,t)\in \bigtriangleup_{d}$. We construct $\mathcal{B}_{\lambda}$ by induction on $s$. For $s=0$ define $\mathcal{B}_{\lambda}=f_t$. By the construction $0\ne\mathcal{B}_{\lambda}\in B_{\lambda}$. Next assume that $s>0$. Consider the black 3-clique in $\bigtriangleup_{d}$ with the locations $\lambda=(r,s,t), \mu=(r,s-1,t+1), \nu=(r+1,s-1,t)$. The vectors $\mathcal{B}_{\mu}$ and $\mathcal{B}_{\nu}$ have been determined by the induction. By Lemma 5.28, there exists a nonzero vector $\mathcal{B}_{\lambda}\in B_{\lambda}$ such that $\mathcal{B}_{\lambda}-\mathcal{B}_{\nu}\in \mathbb{F}\mathcal{B}_{\mu}$. We have constructed $\mathcal{B}_{\lambda}$ such that $0\ne \mathcal{B}_{\lambda}\in B_{\lambda}$ for all $\lambda\in \bigtriangleup_{d}$. By Example 5.24 and Definition 5.25, $\mathcal{B}$ is a Concrete Billiard Array on $V$ that corresponds to $B$. By the above construction and Definition 5.45, $\mathcal{B}$ is standard. We have shown that $\mathcal{B}$ exists. The uniqueness of $\mathcal{B}$ follows by (ii) and Definition 5.45.
\end{proof}
Consider the standard Concrete Billiard Array $\mathcal{B}$ from Lemma 5.46. For location $\lambda=(r,s,t)\in \bigtriangleup_{d}$, by Proposition 5.41 the vector $\mathcal{B}_{\lambda}$ is a linear combination of $f_t, f_{t+1},\dots ,f_{d-r}$. Let $\{a_i(\lambda)\}_{i=t}^{d-r}$ denote the corresponding coefficients, so that
\begin{equation}
\mathcal{B}_{\lambda}=a_t(\lambda)f_t+a_{t+1}(\lambda)f_{t+1}+\dots + a_{d-r}(\lambda)f_{d-r}.
\end{equation}
\begin{lem}
With the above notation, $a_t(\lambda)=1$.
\end{lem}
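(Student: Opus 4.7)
The plan is to prove the identity $a_t(\lambda) = 1$ by induction on $s$, mirroring the inductive construction of $\mathcal{B}$ in the proof of Lemma 5.46. The base case is immediate: when $s = 0$, the construction sets $\mathcal{B}_{\lambda} = f_t$, so the expansion (5.6) has $a_t(\lambda) = 1$ and all higher coefficients zero.

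For the inductive step, fix $\lambda = (r,s,t)$ with $s > 0$ and assume the claim holds for every location in $\bigtriangleup_d$ with second coordinate strictly less than $s$. I would use exactly the black $3$-clique chosen during the construction, namely $\lambda = (r,s,t)$, $\mu = (r,s-1,t+1)$, $\nu = (r+1,s-1,t)$. By that construction, together with Definition 5.45 of $\tau$-standard applied to $\tau = (r,s-1,t) \in \bigtriangleup_{d-1}$, there is a scalar $c \in \mathbb{F}$ with
\begin{equation*}
\mathcal{B}_{\lambda} - \mathcal{B}_{\nu} = c\,\mathcal{B}_{\mu}.
\end{equation*}

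Now I would simply compare the coefficient of $f_t$ on both sides. By the induction hypothesis applied to $\mu$ (whose second coordinate is $s-1$) and the form (5.6), we have
\begin{equation*}
\mathcal{B}_{\mu} = f_{t+1} + a_{t+2}(\mu)f_{t+2} + \dots + a_{d-r}(\mu)f_{d-r},
\end{equation*}
so $\mathcal{B}_{\mu}$ has no $f_t$-component. Similarly, the induction hypothesis applied to $\nu$ gives
\begin{equation*}
\mathcal{B}_{\nu} = f_t + a_{t+1}(\nu)f_{t+1} + \dots + a_{d-r-1}(\nu)f_{d-r-1},
\end{equation*}
whose $f_t$-coefficient is $1$. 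Extracting the $f_t$-coefficient from $\mathcal{B}_{\lambda} = \mathcal{B}_{\nu} + c\mathcal{B}_{\mu}$ therefore yields $a_t(\lambda) = 1 + c\cdot 0 = 1$, as required.

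I do not anticipate any real obstacle: the key observation is that the unique recurrence used to define $\mathcal{B}_{\lambda}$ only adds a multiple of $\mathcal{B}_{\mu}$, and Proposition 5.41 guarantees that this correction lives in the span of $f_{t+1},\dots,f_{d-r}$, so it cannot affect the $f_t$-coefficient inherited from $\mathcal{B}_{\nu}$. The only mild care needed is to confirm that $\mu$ and $\nu$ lie in $\bigtriangleup_d$ (which follows from $s \ge 1$) and that their coordinates fit the ranges in (5.6) (namely, $t_\mu = t+1$ and $t_\nu = t$, $d - r_\nu = d - r - 1$), but both are immediate from the definition of the 3-clique.
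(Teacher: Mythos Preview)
Your proof is correct and follows essentially the same approach as the paper's: induction on $s$, using the same black $3$-clique $\{\lambda,\mu,\nu\}$, the standard property $\mathcal{B}_\lambda - \mathcal{B}_\nu \in \mathbb{F}\mathcal{B}_\mu$, and comparison of the $f_t$-coefficient (noting via Proposition~5.41 that $\mathcal{B}_\mu$ has no $f_t$-component while $a_t(\nu)=1$ by induction). The only superfluous detail is your invocation of the induction hypothesis for $\mu$ to pin down $a_{t+1}(\mu)=1$; all that is actually needed is that $\mathcal{B}_\mu$ lies in the span of $f_{t+1},\dots,f_{d-r}$.
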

\begin{proof}
We do induction on $s$. First assume that $s=0$. Then by Lemma 5.46(ii), $\mathcal{B}_{\lambda}=f_{t}$. Therefore $a_t(\lambda)=1$. Next assume that $s>0$. Consider the black 3-clique in $\bigtriangleup_{d}$ with the locations $\lambda=(r,s,t), \mu=(r,s-1,t+1), \nu=(r+1,s-1,t)$. By Proposition 5.41,
\begin{equation}
\mathcal{B}_{\nu}=a_t(\nu)f_t+a_{t+1}(\nu)f_{t+1}+\dots + a_{d-r-1}(\nu)f_{d-r-1};
\end{equation}
\begin{equation}
\mathcal{B}_{\mu}=a_{t+1}(\mu)f_{t+1}+a_{t+2}(\mu)f_{t+2}+\dots + a_{d-r}(\mu)f_{d-r}.
\end{equation}
By induction,
\begin{equation}
a_t(\nu)=1.
\end{equation}
Since $\mathcal{B}$ is standard,
\begin{equation}
\mathcal{B}_{\lambda}-\mathcal{B}_{\nu}\in \mathbb{F}\mathcal{B}_{\mu}.
\end{equation}
By (5.6)--(5.10) we have $a_t(\lambda)=1$.
\end{proof}
For more information about Billiard Arrays, we refer the reader to \cite{paul}.

\section{Upper triangular matrices and Billiard Arrays}
Recall the set $\mathcal{T}_d(\mathbb{F})$ from Definition 4.14. In this section, we consider a matrix $T\in \mathcal{T}_d(\mathbb{F})$. Using $T$ we construct a Billiard Array $B$. Then for each white 3-clique in $\bigtriangleup_d$, we compute its $B$-value in terms of the entries of $T$.
\begin{1def}
Recall the set $BA_d(\mathbb{F})$ from Definition 5.34. We define a map $b:  \mathcal{T}_d(\mathbb{F}) \to BA_d(\mathbb{F})$ as follows. Let $T\in \mathcal{T}_d(\mathbb{F})$. View $T$ as the transition matrix from a basis $\{u_i\}_{i=0}^{d}$ of $V$ to a basis $\{v_i\}_{i=0}^{d}$ of $V$ as around (4.1). Consider the corresponding three flags $\{U_{i}\}_{i=0}^{d}$, $\{U_{i}^{\prime}\}_{i=0}^{d}$, $\{U_{i}^{\prime\prime}\}_{i=0}^{d}$ on $V$ from (4.5)--(4.7). These flags are totally opposite by Proposition 4.12, so they correspond to a Billiard Array on $V$ by Theorem 5.22. Since the bases $\{u_i\}_{i=0}^{d}$ and $\{v_i\}_{i=0}^{d}$ are not uniquely determined, this Billiard Array is only defined up to isomorphism of Billiard Arrays. The isomorphism class of this Billiard Array is the image of $T$ under $b$.
\end{1def}
In this section, we fix $T\in \mathcal{T}_d(\mathbb{F})$. By Definition 4.14, $T$ is upper triangular and invertible. Fix the two bases $\{u_i\}_{i=0}^{d}$, $\{v_i\}_{i=0}^{d}$ of $V$ as around (4.1) and the three flags $\{U_{i}\}_{i=0}^{d}$, $\{U_{i}^{\prime}\}_{i=0}^{d}$, $\{U_{i}^{\prime\prime}\}_{i=0}^{d}$ on $V$ from (4.5)--(4.7). Let $B$ denote the corresponding Billiard Array on $V$ from Theorem 5.22. Observe that $B\in b(T)$.
\begin{lem}
With the above notation, $B$ is the unique Billiard Array on $V$ such that for $0\le i \le d$,
\begin{enumerate}
\item[\rm(i)] $B_{\mu}=\mathbb{F}u_i$ where $\mu=(d-i,i,0)\in \bigtriangleup_{d}$;
\item[\rm(ii)] $B_{\nu}=\mathbb{F}v_i$ where $\nu=(d-i,0,i)\in \bigtriangleup_{d}$.
\end{enumerate}
\end{lem}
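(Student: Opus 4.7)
The plan is to verify (i), (ii) by a direct computation from the construction of $B$, then derive uniqueness by observing that (i) and (ii) already determine all three $B$-flags. By Theorem 5.22 applied to the totally opposite flags $\{U_i\}_{i=0}^d$, $\{U_i'\}_{i=0}^d$, $\{U_i''\}_{i=0}^d$, we have $B_\lambda=U_{d-r}\cap U'_{d-s}\cap U''_{d-t}$ for every $\lambda=(r,s,t)\in\bigtriangleup_d$. For (i), setting $\mu=(d-i,i,0)$ gives $B_\mu=U_i\cap U'_{d-i}\cap U''_d$. From (4.5)--(4.7), $U''_d=V$, $U_i=\mathbb{F}u_0+\cdots+\mathbb{F}u_i$, and $U'_{d-i}=\mathbb{F}u_d+\mathbb{F}u_{d-1}+\cdots+\mathbb{F}u_i$, so this intersection equals $\mathbb{F}u_i$. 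Part (ii) is symmetric: at $\nu=(d-i,0,i)$ we obtain $B_\nu=U_i\cap U'_d\cap U''_{d-i}$, and using the alternative description $U_i=\mathbb{F}v_0+\cdots+\mathbb{F}v_i$ in (4.5) together with $U''_{d-i}=\mathbb{F}v_d+\mathbb{F}v_{d-1}+\cdots+\mathbb{F}v_i$, the intersection reduces to $\mathbb{F}v_i$.

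For uniqueness, let $B'$ denote any Billiard Array on $V$ satisfying (i) and (ii). The locations $\mu=(d-i,i,0)$ appearing in (i) range exactly over the 3-boundary of $\bigtriangleup_d$, while the locations $\nu=(d-i,0,i)$ appearing in (ii) range exactly over the 2-boundary. Consulting the table in Definition 5.18, condition (i) says that $\{\mathbb{F}u_i\}_{i=0}^d$ is the $B'$-decomposition $[1,2]$, and condition (ii) says that $\{\mathbb{F}v_i\}_{i=0}^d$ is the $B'$-decomposition $[1,3]$. Inverting these sequences also pins down the $B'$-decompositions $[2,1]$ and $[3,1]$. By Lemma 5.19 the $B'$-flag $[\eta]$ is therefore determined for each $\eta\in\{1,2,3\}$ (use $[1,2]$ for $[1]$, $[2,1]$ for $[2]$, and $[3,1]$ for $[3]$). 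Finally, Lemma 5.21 realizes each $B'_\lambda$ as an intersection of three components of these $B'$-flags, forcing $B'_\lambda=B_\lambda$ for every $\lambda\in\bigtriangleup_d$.

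No serious obstacle arises; the argument is bookkeeping with the structures set up in Sections 4 and 5. The one point to keep in mind is that knowledge of $B'$ on only two of the three boundary edges nevertheless recovers all three $B'$-flags, which works because the 3-boundary (resp.\ 2-boundary) already carries two of the six $B'$-decompositions $[\eta,\xi]$ (the second one via inversion), and between them these decompositions induce each of the flags $[1],[2],[3]$.
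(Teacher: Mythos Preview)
Your proof is correct and follows essentially the same approach as the paper: compute $B_\mu$ and $B_\nu$ as the triple intersections from Theorem~5.22, then for uniqueness recover the three $B'$-flags from conditions (i), (ii) via Lemma~5.19 and invoke Lemma~5.21. The paper is slightly terser, asserting directly that the $B'$-flags $[1],[2],[3]$ coincide with $\{U_i\},\{U'_i\},\{U''_i\}$, whereas you spell out the intermediate step through the $B'$-decompositions $[1,2],[1,3]$ and their inversions; this is the same argument made explicit.
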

\begin{proof}
First we show that $B$ satisfies (i). By Theorem 5.22, $B_\mu=U_i\cap U^\prime_{d-i}\cap U^{\prime\prime}_{d}$. The subspace $U_i$ is given by (4.5). By (4.6), $U^\prime_{d-i}=\mathbb{F}u_d+\mathbb{F}u_{d-1}+\dots+\mathbb{F}u_i$. By (4.7), $U^{\prime\prime}_{d}=\mathbb{F}v_d+\mathbb{F}v_{d-1}+\dots+\mathbb{F}v_0=V$. By the above comments, $U_i\cap U^\prime_{d-i}\cap U^{\prime\prime}_{d}=\mathbb{F}u_i$. Hence $B_\mu=\mathbb{F}u_i$. Therefore $B$ satisfies (i). Similarly, $B$ satisfies (ii). Next we show that $B$ is the unique Billiard Array on $V$ that satisfies (i) and (ii). Suppose that $B^\prime$ is a Billiard Array on $V$ that satisfies (i) and (ii). By Lemma 5.19, the $B^\prime$-flag $[1]$ (resp. $[2]$) (resp. $[3]$) is the flag $\{U_i\}_{i=0}^d$ (resp. $\{U_i^\prime\}_{i=0}^d$) (resp. $\{U_i^{\prime\prime}\}_{i=0}^d$). By Lemma 5.21 and Definition 6.1, $B_\lambda=B^\prime_\lambda$ for all $\lambda\in \bigtriangleup_{d}$. Therefore $B=B^\prime$. We conclude that $B$ is the unique Billiard Array on $V$ that satisfies (i) and (ii).
\end{proof}
\begin{lem}
Consider the Billiard Array $B$ on $V$. There exists a unique standard Concrete Billiard Array $\mathcal{B}$ on $V$ such that
\begin{enumerate}
\item[\rm(i)] $\mathcal{B}$ corresponds to $B$ in the sense of Definition 5.25;
\item[\rm(ii)] for $0\le i \le d$, $\mathcal{B}_{\nu}=v_i$ where $\nu=(d-i,0,i)\in \bigtriangleup_{d}$.
\end{enumerate}
\end{lem}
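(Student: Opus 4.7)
The plan is to derive this lemma as an immediate specialization of Lemma 5.46. Recall that Lemma 5.46 takes as input, for each $0\le i\le d$, an arbitrary nonzero vector $f_i\in B_\kappa$ with $\kappa=(d-i,0,i)$, and produces a unique standard Concrete Billiard Array $\mathcal{B}$ corresponding to $B$ such that $\mathcal{B}_\kappa=f_i$. So the only thing to check is that the specific choice $f_i:=v_i$ is legitimate, i.e.\ that each $v_i$ is a nonzero element of $B_{(d-i,0,i)}$.

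First I would invoke Lemma 6.2(ii), which asserts that $B_\nu=\mathbb{F}v_i$ for $\nu=(d-i,0,i)\in \bigtriangleup_d$. Since $\{v_i\}_{i=0}^{d}$ is a basis of $V$, every $v_i$ is nonzero; hence $0\ne v_i\in B_{(d-i,0,i)}$. Consequently the hypothesis of Lemma 5.46 is met by setting $f_i:=v_i$ for $0\le i\le d$.

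Applying Lemma 5.46 with this choice of $f_i$ now yields a unique standard Concrete Billiard Array $\mathcal{B}$ on $V$ that corresponds to $B$ in the sense of Definition 5.25 and satisfies $\mathcal{B}_\nu=v_i$ for $\nu=(d-i,0,i)$, which is exactly the content of (i) and (ii). I do not foresee any obstacle: the genuine construction, namely the inductive extension of the prescribed values across the triangle $\bigtriangleup_d$ using the brace structure at black 3-cliques (Lemma 5.28) together with the standardness condition of Definition 5.45, was already carried out in the proof of Lemma 5.46.
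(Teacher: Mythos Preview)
Your proposal is correct and follows exactly the paper's approach: the paper's proof is simply ``By Lemma 5.46 and Lemma 6.2(ii),'' which is precisely the specialization you describe.
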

\begin{proof}
By Lemma 5.46 and Lemma 6.2(ii).
\end{proof}

Consider the Concrete Billiard Array $\mathcal{B}$ on $V$ from Lemma 6.3. For location $\lambda=(r,s,t)$ in $\bigtriangleup_{d}$, by Proposition 5.41 the vector $\mathcal{B}_{\lambda}$ is a linear combination of the vectors $u_s, u_{s+1},\dots ,u_{d-r}$ and also a linear combination of the vectors $v_t, v_{t+1},\dots ,v_{d-r}$. For notational convenience, abbreviate $b_i(\lambda)=b_i(\mathcal{B}_{\lambda})$ in (4.2) and $c_i(\lambda)=c_i(\mathcal{B}_{\lambda})$ in (4.3), so that
\begin{equation}
\mathcal{B}_{\lambda}=b_s(\lambda)u_s+b_{s+1}(\lambda)u_{s+1}+\dots + b_{d-r}(\lambda)u_{d-r};
\end{equation}
\begin{equation}
\mathcal{B}_{\lambda}=c_t(\lambda)v_t+c_{t+1}(\lambda)v_{t+1}+\dots + c_{d-r}(\lambda)v_{d-r}.
\end{equation}
In the following result we compute the coefficients in (6.2) in terms of the entries of $T$. The coefficients in (6.1) can be similarly computed, but we don't need these coefficients. Recall the $T[i,j]$ notation from Definition 4.1.
\begin{pro}
With the above notation, for location $\lambda=(r,s,t)$ in $\bigtriangleup_{d}$ we have $c_t(\lambda)=1$. Moreover, if $s> 0$,
\begin{equation}
u=-(T[t+1,d-r])^{-1}v,
\end{equation}
where $u=(c_{t+1}(\lambda),c_{t+2}(\lambda)\dots,c_{d-r}(\lambda))^t$ and $v=(T_{0t},T_{1t}\dots, T_{s-1,t})^t$.
\end{pro}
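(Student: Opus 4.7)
The plan is to reduce both parts to a linear-algebra calculation that combines the two expansions (6.1), (6.2) of $\mathcal{B}_\lambda$ via the transition formula (4.4). For $c_t(\lambda)=1$, I would invoke Lemma 5.47 directly. Lemma 6.2(ii) gives $B_{(d-i,0,i)}=\mathbb{F}v_i$, so by Definition 5.18 the basis $\{v_i\}_{i=0}^d$ induces the $B$-decomposition $[1,3]$. Hence the $\mathcal{B}$ of Lemma 6.3 coincides with the Concrete Billiard Array produced by Lemma 5.46 with $f_i=v_i$, and Lemma 5.47 applied in this setup (so that $a_i(\lambda)=c_i(\lambda)$) yields $c_t(\lambda)=1$.

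Now assume $s>0$. The expansion (6.1) implicitly encodes $b_i(\lambda)=0$ for $0\le i\le s-1$, while (6.2) encodes $c_j(\lambda)=0$ for $j<t$ and $j>d-r$. Applying the transition relation (4.4) to $x=\mathcal{B}_\lambda$ gives
\begin{equation*}
b_i(\lambda)=\sum_{j=t}^{d-r}T_{ij}c_j(\lambda),\qquad 0\le i\le d.
\end{equation*}
For $0\le i\le s-1$ the left side vanishes; peeling off the $j=t$ summand and using $c_t(\lambda)=1$, we obtain
\begin{equation*}
\sum_{j=t+1}^{d-r}T_{ij}c_j(\lambda)=-T_{it},\qquad 0\le i\le s-1.
\end{equation*}

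It remains to match this linear system with the stated formula. Since $r+s+t=d$, we have $d-r=s+t$, and by Definition 4.1 the submatrix $T[t+1,d-r]$ is the $s\times s$ matrix with entry $T_{kl}$ for $0\le k\le s-1$ and $t+1\le l\le s+t$. Thus the displayed system is precisely $T[t+1,d-r]\,u=-v$, and since $T\in\mathcal{T}_d(\mathbb{F})$ is very good, the block $T[t+1,d-r]$ is invertible by Definitions 4.4 and 4.14, giving $u=-(T[t+1,d-r])^{-1}v$. The argument is wholly linear-algebraic with no genuine obstacle; the only point requiring care is tracking which coordinates in the $b$-expansion and $c$-expansion are forced to vanish, since this determines the exact block of $T$ that appears in the formula.
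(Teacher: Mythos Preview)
Your proof is correct and follows essentially the same approach as the paper: invoke Lemma 5.47 via Lemma 6.3 for $c_t(\lambda)=1$, then read off the vanishing of the appropriate $b_i$- and $c_j$-coordinates from (6.1), (6.2), plug into the transition relation (4.4), and solve the resulting block system using that $T[t+1,d-r]$ is invertible because $T$ is very good. Your writeup simply unpacks a few steps (identifying the block via Definition 4.1 and $d-r=s+t$) that the paper leaves implicit.
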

\begin{proof}
By Lemma 5.47 and Lemma 6.3, $c_t(\lambda)=1$.

For the rest of the proof, assume that $s> 0$.
By (6.1), $b_i(\lambda)=0$ for $0\le i \le s-1$. By (6.2), $c_i(\lambda)=0$ for $0\le i \le t-1$ and $d-r+1 \le i \le d$. Evaluating (4.4) using the above comments, we obtain
\begin{equation}
v+T[t+1,d-r]u=0.
\end{equation}
The matrix $T$ is very good by Definition 4.14, so $T[t+1,d-r]$ is invertible. Solving (6.4) for $u$, we obtain (6.3).
\end{proof}
Recall the $B$-value concept from Definition 5.31. Our next goal is to compute these values for the Billiard Array $B$.
\begin{1def}
Pick $\lambda=(r,s,t)\in \mathbb{Z}^3$. If $\lambda\in \bigtriangleup_{d}$, then define $T[\lambda]$ to be the submatrix $T[t,d-r]$ from Definition 4.1. If $\lambda\notin \bigtriangleup_{d}$, then define $T[\lambda]$ to be the empty set $\emptyset$.
\end{1def}
For the rest of this section assume $d\ge 2$. For a location $\tau=(r,s,t)\in \bigtriangleup_{d-2}$, consider the corresponding white 3-clique in $\bigtriangleup_{d}$ from Lemma 5.12. This white 3-clique consists of the locations
\begin{equation*}
\lambda=(r,s+1,t+1),\qquad \mu=(r+1,s,t+1),\qquad \nu=(r+1,s+1,t).
\end{equation*}
Next, consider the vectors $\mu\pm\alpha, \mu\pm\beta, \mu\pm\gamma,$ where $\alpha, \beta,  \gamma$ are from (5.1). Note that $\lambda=\mu-\alpha$ and $\nu=\mu+\beta$. Moreover,
\begin{equation*}
\begin{split}
\mu+\alpha=(r+2,s-1,t+1), \qquad \mu-\alpha=(r,s+1,t+1),\\
\mu+\beta=(r+1,s+1,t),\qquad \mu-\beta=(r+1,s-1,t+2),\\
\mu+\gamma=(r,s,t+2), \qquad \mu-\gamma=(r+2,s,t).
\end{split}
\end{equation*}
The above vectors form a hexagon as follows:
\[
    \setlength{\arraycolsep}{1pt}
    \begin{array}{ccccc}
        & \mu+\beta  &   & \mu-\alpha &    \\
      \mu-\gamma  &   & \mu &   & \mu+\gamma  \\
        & \mu+\alpha &   & \mu-\beta &      \\
    \end{array}
  \]

\begin{thm}
With the above notation, the $B$-value of the white 3-clique in $\bigtriangleup_{d}$ that corresponds to $\tau$ is
\begin{equation}
\frac{\det(T[\mu+\alpha])\det(T[\mu+\beta])\det(T[\mu+\gamma])}{\det(T[\mu-\alpha])\det(T[\mu-\beta])\det(T[\mu-\gamma])},
\end{equation}
where we interpret $\det(\emptyset)=1$.
\end{thm}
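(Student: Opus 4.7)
The plan is to compute the $B$-value by applying Lemma 5.40 to the standard Concrete Billiard Array $\mathcal{B}$ corresponding to $B$ (supplied by Lemma 6.3), and then to evaluate each of the three edge-scalars $\widetilde{\mathcal{B}}_{\lambda,\mu}$, $\widetilde{\mathcal{B}}_{\mu,\nu}$, $\widetilde{\mathcal{B}}_{\nu,\lambda}$ in terms of the $v$-coefficients of the relevant $\mathcal{B}_\kappa$, which are in turn controlled by Proposition 6.4.

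First I would use Lemma 5.11 to identify the three black $3$-cliques completing the three edges of the white clique; a direct check shows that these contain $\mu+\gamma=(r,s,t+2)$, $\mu-\gamma=(r+2,s,t)$, and the further location $\rho:=(r,s+2,t)$ respectively. Applying Definition 5.45 together with Lemma 5.39 to each of these black cliques, using that $\mathcal{B}$ is standard, produces scalars $c',c'',c'''\in\mathbb{F}\setminus\{0\}$ with
\begin{equation*}
\mathcal{B}_\mu-\mathcal{B}_\lambda=c'\,\mathcal{B}_{\mu+\gamma},\qquad
\mathcal{B}_{\mu-\gamma}-\mathcal{B}_\nu=c''\,\mathcal{B}_\mu,\qquad
\mathcal{B}_\nu-\mathcal{B}_{\rho}=c'''\,\mathcal{B}_\lambda.
\end{equation*}
Matching each with the defining equation for $\widetilde{\mathcal{B}}$ yields $\widetilde{\mathcal{B}}_{\lambda,\mu}=-1$, $\widetilde{\mathcal{B}}_{\mu,\nu}=1/c''$, $\widetilde{\mathcal{B}}_{\nu,\lambda}=-c'''$. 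Taking the product and invoking Lemma 5.40 (with the clockwise orientation inherited from the hexagon picture above) shows that the $B$-value equals $c'''/c''$.

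Next I would determine $c''$ and $c'''$ by comparing a single $v$-coefficient in each of the last two relations. By Proposition 5.41 the $v$-support of $\mathcal{B}_{\mu-\gamma}$ ends at $v_{d-r-2}$ while that of $\mathcal{B}_\nu$ ends at $v_{d-r-1}$, so the coefficient of $v_{d-r-1}$ in $\mathcal{B}_{\mu-\gamma}-\mathcal{B}_\nu=c''\mathcal{B}_\mu$ gives $c''=-c_{d-r-1}(\nu)/c_{d-r-1}(\mu)$. An analogous comparison at $v_{d-r}$ in the third relation gives $c'''=-c_{d-r}(\rho)/c_{d-r}(\lambda)$.

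For the last step I would express each top coefficient $c_{d-r_\kappa}(\kappa)$ for $\kappa\in\{\mu,\nu,\lambda,\rho\}$ as a ratio of determinants of submatrices of $T$. Applying Cramer's rule to the last unknown of the linear system (6.3) of Proposition 6.4, and cycling the column $(T_{0,t_\kappa},\dots,T_{s_\kappa-1,t_\kappa})^t$ from last to first position (which contributes a sign $(-1)^{s_\kappa-1}$ from $s_\kappa-1$ adjacent column transpositions, combined with an overall $-1$ from the right-hand side of (6.3)), one obtains
\begin{equation*}
c_{d-r_\kappa}(\kappa)=(-1)^{s_\kappa}\,
\frac{\det T[t_\kappa,\,d-r_\kappa-1]}{\det T[t_\kappa+1,\,d-r_\kappa]}.
\end{equation*}
Substituting this into $c'''/c''$, the factor $\det T[t+1,d-r-1]$ common to both cancels, and the six remaining determinants are precisely $\det T[\mu\pm\alpha], \det T[\mu\pm\beta], \det T[\mu\pm\gamma]$ once one uses Definition 6.5 and the identifications $\mu-\alpha=\lambda$, $\mu+\beta=\nu$, $\mu+\gamma=(r,s,t+2)$, $\mu-\gamma=(r+2,s,t)$. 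The main obstacle is this final bookkeeping step: tracking the signs from Cramer's rule and the column permutation, and verifying that the $s=0$ boundary case, in which $\mu+\alpha$ and $\mu-\beta$ fall outside $\bigtriangleup_d$, is consistent with the convention $\det(\emptyset)=1$ from Definition 6.5 and the $s_\kappa=0$ branch of Proposition 6.4 (where $\mathcal{B}_\kappa=v_{t_\kappa}$ and the corresponding top coefficient is simply $1$).
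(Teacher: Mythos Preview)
Your proposal is correct and follows essentially the same route as the paper's proof. You use the same three black $3$-cliques (your $\rho=(r,s+2,t)$ is the paper's $\mu-\alpha+\beta$), the same Lemma~5.40 decomposition of the $B$-value, and the same Cramer's-rule evaluation of the top $v$-coefficient $c_{d-r_\kappa}(\kappa)$ (your formula coincides with the paper's (6.21)); the only cosmetic difference is that you extract $\widetilde{\mathcal{B}}_{\lambda,\mu}=-1$ directly from the standard property of $\mathcal{B}$, whereas the paper obtains it by comparing the leading $v$-coefficients $c_{t+1}(\mu)=c_{t+1}(\mu-\alpha)=1$ via Proposition~6.4.
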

\begin{proof}
Consider the following vectors:
\[
    \setlength{\arraycolsep}{1pt}
    \begin{array}{ccccc}
     &  & \mu-\alpha+\beta  &  &      \\
        & \mu+\beta  &   & \mu-\alpha &    \\
      \mu-\gamma  &   & \mu &   & \mu+\gamma  \\
    \end{array}
  \]
We have three black 3-cliques with the following locations:
\begin{enumerate}
\item[\rm(i)] $\mu,\ \mu-\alpha$, $\mu+\gamma$;
\item[\rm(ii)] $\mu-\gamma,\ \mu+\beta$, $\mu$;
\item[\rm(iii)] $\mu+\beta,\ \mu-\alpha+\beta$, $\mu-\alpha$.
\end{enumerate}
For notational convenience, let $\overline{\tau}$ denote the $B$-value of the white 3-clique in $\bigtriangleup_{d}$ that corresponds to $\tau$.
By Lemma 5.40,
\begin{equation}
\overline{\tau}=\widetilde{\mathcal{B}}_{\mu-\alpha,\mu}\widetilde{\mathcal{B}}_{\mu,\mu+\beta}\widetilde{\mathcal{B}}_{\mu+\beta,\mu-\alpha}.
\end{equation}
We now show that
\begin{equation}
\widetilde{\mathcal{B}}_{\mu-\alpha,\mu}=-1.
\end{equation}
We apply Lemma 5.39 to the black 3-clique (i) and obtain
\begin{equation}
\mathcal{B}_{\mu-\alpha}+\widetilde{\mathcal{B}}_{\mu-\alpha,\mu}\mathcal{B}_{\mu}+\widetilde{\mathcal{B}}_{\mu-\alpha,\mu+\gamma}\mathcal{B}_{\mu+\gamma}=0.
\end{equation}
For convenience we rewrite (6.8) as
\begin{equation}
\mathcal{B}_{\mu-\alpha}-\mathcal{B}_{\mu}+(\widetilde{\mathcal{B}}_{\mu-\alpha,\mu}+1)\mathcal{B}_{\mu}+\widetilde{\mathcal{B}}_{\mu-\alpha,\mu+\gamma}\mathcal{B}_{\mu+\gamma}=0.
\end{equation}
By Proposition 5.41,
\begin{equation}
\mathcal{B}_{\mu}\in \mathbb{F}v_{t+1}+\mathbb{F} v_{t+2}+\dots +\mathbb{F}v_{d-r-1},
\end{equation}
\begin{equation}
\mathcal{B}_{\mu-\alpha}\in \mathbb{F}v_{t+1}+\mathbb{F} v_{t+2}+\dots +\mathbb{F}v_{d-r},
\end{equation}
\begin{equation}
\mathcal{B}_{\mu+\gamma}\in \mathbb{F}v_{t+2}+\mathbb{F} v_{t+3}+\dots +\mathbb{F}v_{d-r}.
\end{equation}
By Proposition 6.4,
\begin{equation}
c_{t+1}(\mu)=1, \qquad c_{t+1}(\mu-\alpha)=1.
\end{equation}
By (6.10), (6.11) and (6.13),
\begin{equation}
\mathcal{B}_{\mu-\alpha}-\mathcal{B}_{\mu}\in \mathbb{F}v_{t+2}+\mathbb{F} v_{t+3}+\dots +\mathbb{F}v_{d-r}.
\end{equation}
By (6.9), (6.12) and (6.14),
\begin{equation}
(\widetilde{\mathcal{B}}_{\mu-\alpha,\mu}+1)\mathcal{B}_{\mu}\in \mathbb{F}v_{t+2}+\mathbb{F} v_{t+3}+\dots +\mathbb{F}v_{d-r}.
\end{equation}
By (6.10), the equation on the left in (6.13), and (6.15), we obtain (6.7).\\
Next, we show that
\begin{equation}
\widetilde{\mathcal{B}}_{\mu,\mu+\beta}=\frac{\det(T[\mu+\alpha])\det(T[\mu])}{\det(T[\mu-\beta])\det(T[\mu-\gamma])}.
\end{equation}
There are two cases. First assume that $s\ne 0$. We apply Lemma 5.39 to the black 3-clique (ii) and obtain
\begin{equation}
\mathcal{B}_{\mu}+\widetilde{\mathcal{B}}_{\mu,\mu+\beta}\mathcal{B}_{\mu+\beta}+\widetilde{\mathcal{B}}_{\mu,\mu-\gamma}\mathcal{B}_{\mu-\gamma}=0.
\end{equation}
By Proposition 5.41,
\begin{equation}
\mathcal{B}_{\mu-\gamma}\in \mathbb{F}v_{t}+\mathbb{F} v_{t+1}+\dots +\mathbb{F}v_{d-r-2}.
\end{equation}
By (6.17) and (6.18),
\begin{equation}
\mathcal{B}_{\mu}+\widetilde{\mathcal{B}}_{\mu,\mu+\beta}\mathcal{B}_{\mu+\beta}\in \mathbb{F}v_{t}+\mathbb{F} v_{t+1}+\dots +\mathbb{F}v_{d-r-2}.
\end{equation}
By Proposition 5.41,
\begin{equation}
\begin{split}
\mathcal{B}_{\mu}\in \mathbb{F}v_{t+1}+\mathbb{F} v_{t+2}+\dots +\mathbb{F}v_{d-r-1};\\
\mathcal{B}_{\mu+\beta}\in \mathbb{F}v_{t}+\mathbb{F} v_{t+1}+\dots +\mathbb{F}v_{d-r-1}.\\
\end{split}
\end{equation}
By Proposition 6.4 and Cramer's rule,
\begin{equation}
\begin{split}
c_{d-r-1}(\mu)=\frac{(-1)^{s}\det(T[t+1,d-r-2])}{\det(T[t+2,d-r-1])};\\
c_{d-r-1}(\mu+\beta)=\frac{(-1)^{s+1}\det(T[t,d-r-2])}{\det(T[t+1,d-r-1])}.\\
\end{split}
\end{equation}
By (6.19)--(6.21),
\begin{equation}
\widetilde{\mathcal{B}}_{\mu,\mu+\beta} = \frac{\det(T[t+1,d-r-2])\det(T[t+1,d-r-1])}{\det(T[t+2,d-r-1])\det(T[t,d-r-2])}.
\end{equation}
Evaluating (6.22) using Definition 6.5, we obtain (6.16).\\
Next assume that $s=0$. Using an argument similar to (6.17)--(6.21), we obtain
\begin{equation}
\widetilde{\mathcal{B}}_{\mu,\mu+\beta}=\frac{\det(T[t+1,t+1])}{\det(T[t,t])}.
\end{equation}
Since we interpret $\det(\emptyset)=1$,
\begin{equation}
\det(T[\mu+\alpha])=1, \qquad \det(T[\mu-\beta])=1.
\end{equation}
Evaluate (6.23) using Definition 6.5. Combining the result with (6.24), we obtain (6.16).\\
We have shown (6.16). In a similar manner using the black 3-clique (iii), we obtain
\begin{equation}
\widetilde{\mathcal{B}}_{\mu+\beta,\mu-\alpha}=-\frac{\det(T[\mu+\beta])\det(T[\mu+\gamma])}{\det(T[\mu-\alpha])\det(T[\mu])}.
\end{equation}
Evaluating (6.6) using (6.7), (6.16), (6.25) we obtain (6.5).
\end{proof}

\section{How $\mathcal{T}_d(\mathbb{F})$ is related to $BA_d{(\mathbb{F}})$}
Recall the set $\mathcal{T}_d(\mathbb{F})$ from Definition 4.14, and the set $BA_d{(\mathbb{F}})$ from Definition 5.34. In this section we explain how $\mathcal{T}_d(\mathbb{F})$ is related to $BA_d{(\mathbb{F}})$. For convenience, we first consider an equivalence relation on $\mathcal{T}_d(\mathbb{F})$.
\begin{1def}
For $T, T^\prime\in \mathcal{T}_d(\mathbb{F})$, we declare $T\sim T^\prime$ whenever there exist invertible diagonal matrices $H, K \in {\rm Mat}_{d+1}(\mathbb{F})$ such that
\begin{equation}
T^{\prime}=HTK.
\end{equation}
The relation $\sim$ is an equivalence relation. For $T\in \mathcal{T}_d(\mathbb{F})$, let $[T]$ denote the equivalence class of $\sim$ that contains $T$. Let $\mathbb{T}_d(\mathbb{F})$ denote the set of equivalence classes for $\sim$.
\end{1def}
Recall the map $b: \mathcal{T}_d(\mathbb{F}) \to BA_d(\mathbb{F})$ from Definition 6.1. As we will see, $b$ is surjective but not bijective. We will show that for $T, T^\prime\in \mathcal{T}_d(\mathbb{F})$, $b(T)=b(T^\prime)$ if and only if $T \sim T^\prime$. This tells us that the $b$-induced map $\mathbb{T}_d(\mathbb{F})\to BA_d(\mathbb{F})$ is bijective.

Referring to Definition 7.1, assume that $T \sim T^\prime$. Pick invertible diagonal matrices $H, K \in {\rm Mat}_{d+1}(\mathbb{F})$ that satisfy (7.1). Observe that the entries $H_{ii}\ne 0$ and $K_{ii}\ne 0$ for $0\le i \le d$. View $T$ as the the transition matrix from a basis $\{u_i\}_{i=0}^{d}$ of $V$ to a basis $\{v_i\}_{i=0}^{d}$ of $V$ as around (4.1). Recall the flags $\{U_{i}\}_{i=0}^{d}$, $\{U_{i}^{\prime}\}_{i=0}^{d}$, $\{U_{i}^{\prime\prime}\}_{i=0}^{d}$ from Definition 6.1. The matrix $T^\prime$ can be viewed as the transition matrix from a basis $\{u_i^\prime\}_{i=0}^{d}$ of $V$ to a basis $\{v_i^\prime\}_{i=0}^{d}$ of $V$, where $u_i=H_{ii}u_i^\prime$ and $K_{ii}v_i=v_i^\prime$ for $0\le i\le d$. Observe that the flags $\{U_{i}\}_{i=0}^{d}$, $\{U_{i}^{\prime}\}_{i=0}^{d}$, $\{U_{i}^{\prime\prime}\}_{i=0}^{d}$ are the same for $T$ and $T^\prime$.
\begin{lem}
Let matrices $T,T^{\prime}\in \mathcal{T}_d(\mathbb{F})$ satisfy $T\sim T^\prime$ in the sense of Definition 7.1. Then $b(T)=b(T^\prime)$.
\end{lem}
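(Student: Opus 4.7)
The plan is to exploit the paragraph immediately preceding the lemma, which has already done the essential bookkeeping. There it is established that, given invertible diagonal $H,K$ with $T'=HTK$, if one defines $u_i'=H_{ii}^{-1}u_i$ and $v_i'=K_{ii}v_i$, then $T'$ is the transition matrix from $\{u_i'\}_{i=0}^d$ to $\{v_i'\}_{i=0}^d$. (To see this, starting from $v_j=\sum_i T_{ij}u_i$ one computes $v_j'=K_{jj}v_j=\sum_i K_{jj}T_{ij}H_{ii}u_i'=\sum_i(HTK)_{ij}u_i'$.) So in order to compute $b(T')$ via Definition 6.1, one is free to use the bases $\{u_i'\},\{v_i'\}$.

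The key observation is then that the three flags $\{U_i\}$, $\{U_i'\}$, $\{U_i''\}$ produced by formulas (4.5)--(4.7) depend only on the one-dimensional subspaces $\mathbb{F}u_i$ and $\mathbb{F}v_i$, not on the specific basis vectors chosen to span them. Since $\mathbb{F}u_i'=\mathbb{F}u_i$ and $\mathbb{F}v_i'=\mathbb{F}v_i$ for $0\le i\le d$, the three flags constructed from $\{u_i'\},\{v_i'\}$ coincide identically (as sequences of subspaces of $V$) with those constructed from $\{u_i\},\{v_i\}$.

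The final step is to invoke Theorem 5.22: the Billiard Array it produces from three totally opposite flags is given by the explicit formula $B_\lambda=W_{d-r}\cap W_{d-s}'\cap W_{d-t}''$, so it is determined pointwise by the flags. Since the flags agree, the Billiard Array on $V$ obtained from $T'$ via Definition 6.1 is literally the same function $\bigtriangleup_d\to\mathcal{P}_1(V)$ as the one obtained from $T$. In particular they are isomorphic (via the identity map $V\to V$), so their isomorphism classes coincide, that is, $b(T)=b(T')$ in $BA_d(\mathbb{F})$.

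There is no serious obstacle here: the substantive content is the scaling identity $\mathbb{F}(\lambda x)=\mathbb{F}x$ for $\lambda\ne 0$, together with the fact that Definition 6.1 constructs the Billiard Array by passing through the flags rather than through the bases. The only thing one must be slightly careful about is that the Billiard Array $b(T)$ is only well-defined up to isomorphism, but since we actually exhibit identical Billiard Arrays on the same underlying space, this ambiguity does not affect the conclusion.
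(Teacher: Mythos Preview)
Your proof is correct and follows essentially the same approach as the paper. The paper's own proof is a one-liner (``By the discussion above the lemma statement, along with Definition 6.1''), relying entirely on the preceding paragraph; your write-up simply makes that discussion more explicit by verifying the transition-matrix identity and spelling out why the flags depend only on the lines $\mathbb{F}u_i$, $\mathbb{F}v_i$.
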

\begin{proof}
By the discussion above the lemma statement, along with Definition 6.1.
\end{proof}
\begin{1def}
Using the map $b: \mathcal{T}_d(\mathbb{F}) \to BA_d(\mathbb{F})$, we define a map ${\bf b}:  \mathbb{T}_d(\mathbb{F}) \to BA_d(\mathbb{F})$ as follows. Given an equivalence class $[T] \in \mathbb{T}_d(\mathbb{F})$, the image of $[T]$ under ${\bf b}$ is $b(T)$. By Lemma 7.2 the map ${\bf b}$ is well-defined.
\end{1def}
\begin{thm}
The map ${\bf b}: \mathbb{T}_d(\mathbb{F}) \to BA_d(\mathbb{F})$ from Definition 7.3 is bijective.
\end{thm}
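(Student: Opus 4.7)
The plan is to establish that $\mathbf{b}$ is both surjective and injective, with Lemma 6.2 playing the central role in both directions: it pins down the Billiard Array built from $T$ by its values at the locations $(d-i,i,0)$ and $(d-i,0,i)$. Surjectivity is essentially a repackaging of Corollary 5.44, while injectivity amounts to tracking how a Billiard Array isomorphism rescales the two distinguished bases, which forces the transition matrix to change by diagonal factors on both sides.

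For surjectivity, I would start with an isomorphism class $[B]\in BA_d(\mathbb{F})$ and realize it by a Billiard Array $B$ on $V$; any diameter-$d$ Billiard Array over $\mathbb{F}$ can be transported to $V$ via any linear bijection between its underlying space and $V$, and the resulting array is isomorphic to the original. Then, for $0\le i\le d$, choose nonzero vectors $u_i\in B_{(d-i,i,0)}$ and $v_i\in B_{(d-i,0,i)}$; these are bases of $V$ inducing the $B$-decompositions $[1,2]$ and $[1,3]$, respectively. By Corollary 5.44 the transition matrix $T$ from $\{u_i\}_{i=0}^d$ to $\{v_i\}_{i=0}^d$ belongs to $\mathcal{T}_d(\mathbb{F})$, and by Lemma 6.2 the Billiard Array produced from $T$ through Definition 6.1 is exactly $B$, so $\mathbf{b}([T])=[B]$.

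For injectivity, suppose $T,T'\in\mathcal{T}_d(\mathbb{F})$ satisfy $b(T)=b(T')$. Let $\{u_i\}_{i=0}^d$, $\{v_i\}_{i=0}^d$ and $\{u'_i\}_{i=0}^d$, $\{v'_i\}_{i=0}^d$ be the bases of $V$ used to construct the Billiard Arrays $B,B'$ on $V$ from $T,T'$ via Definition 6.1. By hypothesis there is an $\mathbb{F}$-linear bijection $\sigma\colon V\to V$ with $\sigma(B_\lambda)=B'_\lambda$ for all $\lambda\in\bigtriangleup_d$. Applying Lemma 6.2 to both $T$ and $T'$ identifies $B_{(d-i,i,0)}=\mathbb{F}u_i$, $B_{(d-i,0,i)}=\mathbb{F}v_i$, and similarly with primes, so there exist nonzero scalars $h_i,k_i\in\mathbb{F}$ with $\sigma(u_i)=h_i u'_i$ and $\sigma(v_j)=k_j^{-1}v'_j$. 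Computing $\sigma(v_j)$ in two ways from $v_j=\sum_i T_{ij}u_i$ and comparing with $v'_j=\sum_i T'_{ij}u'_i$ yields $T'_{ij}=h_i T_{ij} k_j$ for all $i,j$. Setting $H=\mathrm{diag}(h_0,\ldots,h_d)$ and $K=\mathrm{diag}(k_0,\ldots,k_d)$, we conclude $T'=HTK$, so $T\sim T'$ and $[T]=[T']$ in $\mathbb{T}_d(\mathbb{F})$.

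The only point requiring a little care is the bookkeeping of the scalar factors $h_i,k_j$ in the injectivity argument; the rest of the proof is a direct application of the identifications already supplied by Lemma 6.2 and Corollary 5.44.
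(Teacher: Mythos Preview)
Your proof is correct and follows essentially the same approach as the paper's own proof: both use Corollary 5.44 and Lemma 6.2 for surjectivity, and for injectivity both transport the isomorphism $\sigma$ to the distinguished bases via Lemma 6.2 to extract diagonal rescalings $H,K$ with $T'=HTK$. The only cosmetic difference is that you compute the entries $T'_{ij}=h_iT_{ij}k_j$ explicitly, whereas the paper phrases this as ``$T$ is also the transition matrix from $\{\sigma(u_i)\}$ to $\{\sigma(v_i)\}$''; the content is identical.
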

\begin{proof}
First we show that the map ${\bf b}$ is surjective. Pick a Billiard Array $B$ on $V$. It suffices to show that there exists $T\in \mathcal{T}_d(\mathbb{F})$ such that $B\in b(T)$. For $0\le i \le d$, pick $0\ne u_i\in B_\mu$ where $\mu=(d-i,i,0)\in \bigtriangleup_{d}$, and $0\ne v_i\in B_\nu$ where $\nu=(d-i,0,i)\in \bigtriangleup_{d}$. By Definition 5.14, $\{u_i\}_{i=0}^d$ and $\{v_i\}_{i=0}^d$ are bases of $V$. Let $T\in {\rm Mat}_{d+1}(\mathbb{F})$ denote the transition matrix from the basis $\{u_i\}_{i=0}^d$ to the basis $\{v_i\}_{i=0}^d$. By Corollary 5.44 $T\in \mathcal{T}_d(\mathbb{F})$. Above Lemma 6.2 we refer to a Billiard Array obtained from Theorem 5.22. By Lemma 6.2, this Billiard Array is the Billiard Array $B$. Hence $B\in b(T)$ by the last sentence above Lemma 6.2. We have shown that the map ${\bf b}$ is surjective.

Next we show that the map ${\bf b}$ is injective. Suppose that $T,T^\prime \in \mathcal{T}_d(\mathbb{F})$ satisfy $b(T)=b(T^\prime)$. We will show that $T\sim T^\prime$ in the sense of Definition 7.1. Define the Billiard Arrays $B\in b(T)$ and $B^\prime \in b(T^\prime)$ as around Definition 6.1. Since $b(T)=b(T^\prime)$, the Billiard Arrays $B$ and $B^\prime$ are isomorphic. Therefore there exists an isomorphism $\sigma$ of Billiard Arrays from $B$ to $B^\prime$. By Definition 5.16, $\sigma: V \to V$ is an $\mathbb{F}$-linear bijection that sends $B_\lambda\mapsto B_\lambda^\prime$ for all $\lambda\in \bigtriangleup_{d}$. Associated with $T$ we have the bases $\{u_i\}_{i=0}^{d}$ and $\{v_i\}_{i=0}^{d}$ of $V$ from Definition 6.1. Similarly, associated with $T^\prime$ we have bases $\{u_i^\prime\}_{i=0}^{d}$ and $\{v_i^\prime\}_{i=0}^{d}$ of $V$. To be more precise, $T$ is the transition matrix from the basis $\{u_i\}_{i=0}^d$ to the basis $\{v_i\}_{i=0}^d$, and $T^\prime$ is the transition matrix from the basis $\{u_i^\prime\}_{i=0}^d$ to the basis $\{v_i^\prime\}_{i=0}^d$. Since $\sigma$ is an $\mathbb{F}$-linear bijection, $T$ is also the transition matrix from the basis $\{\sigma(u_i)\}_{i=0}^d$ of $V$ to the basis $\{\sigma(v_i)\}_{i=0}^d$ of $V$. By Lemma 6.2, for $0\le i\le d$ there exist nonzero $h_i,k_i\in \mathbb{F}$ such that $\sigma(u_i)=h_iu_i^\prime$ and $k_i\sigma(v_i)=v_i^\prime$. Define diagonal matrices $H, K \in {\rm Mat}_{d+1}(\mathbb{F})$ such that $H_{ii}=h_i$ and $K_{ii}=k_i$ for $0\le i\le d$. By construction $H$ and $K$ are invertible. By the above comments, $T^\prime=HTK$. By Definition 7.1, $T\sim T^\prime$. We have shown that the map ${\bf b}$ is injective. Hence the map ${\bf b}$ is bijective.
\end{proof}
We continue to discuss the equivalence relation $\sim$ from Definition 7.1. Our next goal is to identify a representative in each equivalence class.
\begin{1def}
A matrix $T\in \mathcal{T}_d(\mathbb{F})$ is called {\it nice} whenever $T_{0i}=T_{ii}=1$ for $0\le i\le d$.
\end{1def}
\begin{lem}
For the equivalence relation $\sim$ on $\mathcal{T}_d(\mathbb{F})$ from Definition 7.1, each equivalence class contains a unique nice element in the sense of Definition 7.5.
\end{lem}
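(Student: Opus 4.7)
The plan is to settle existence and uniqueness separately, each reducing to a short computation since $H$ and $K$ only rescale the entries of $T$ entrywise.

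For existence, I will start from an arbitrary $T\in\mathcal{T}_d(\mathbb{F})$ and produce diagonal invertible $H,K$ so that $T':=HTK$ is nice. The key observation is that because $T$ is very good, for each $0\le i\le d$ the submatrix $T[i,i]=(T_{0i})$ and $T[0,0]=(T_{00})$ are invertible; more generally $T[0,i]$ is upper triangular and invertible, so every diagonal entry $T_{ii}$ is nonzero. In particular $T_{0i}\ne 0$ and $T_{ii}\ne 0$ for $0\le i\le d$. Since $T'_{ij}=H_{ii}T_{ij}K_{jj}$, the two nice conditions $T'_{0i}=T'_{ii}=1$ translate into
\begin{equation*}
H_{00}T_{0i}K_{ii}=1, \qquad H_{ii}T_{ii}K_{ii}=1 \qquad (0\le i\le d).
\end{equation*}
Setting $H_{00}=1$ forces $K_{ii}=T_{0i}^{-1}$ and then $H_{ii}=T_{0i}T_{ii}^{-1}$; both are nonzero so $H,K$ are invertible diagonal. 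The resulting $T'$ is upper triangular (as $H,K$ are diagonal) and, since rescaling rows and columns by nonzero scalars multiplies $\det(T[i,j])$ by a nonzero factor, $T'$ remains very good. Thus $T'\in\mathcal{T}_d(\mathbb{F})$ and is nice, with $T\sim T'$.

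For uniqueness, suppose $T,T'\in\mathcal{T}_d(\mathbb{F})$ are both nice and satisfy $T'=HTK$ for invertible diagonal $H,K$. Comparing diagonal entries gives $1=T'_{ii}=H_{ii}T_{ii}K_{ii}=H_{ii}K_{ii}$, so $H_{ii}K_{ii}=1$ for all $i$. Comparing the top row gives $1=T'_{0i}=H_{00}T_{0i}K_{ii}=H_{00}K_{ii}$, so $K_{ii}=H_{00}^{-1}$ is independent of $i$. Hence $K=H_{00}^{-1}I$ and $H=H_{00}I$, which yields $HTK=T$; therefore $T'=T$.

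Neither step is a real obstacle: the main point is just to notice that Definition 4.4 forces both $T_{0i}\ne 0$ and $T_{ii}\ne 0$ (via the $1\times 1$ and $(i{+}1)\times(i{+}1)$ leading principal submatrices), which is exactly what makes the normalization solvable, and that the nice conditions, once imposed on both sides, rigidify $H$ and $K$ into scalar matrices that cancel in the product.
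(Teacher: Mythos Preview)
Your proof is correct and follows essentially the same approach as the paper: both arguments normalize the top row and diagonal of $T$ by choosing $H,K$ with $H_{ii}=T_{0i}/T_{ii}$ and $K_{ii}=T_{0i}^{-1}$ (your normalization $H_{00}=1$ yields exactly these values), and both establish uniqueness by comparing the $(0,i)$- and $(i,i)$-entries to force $H$ and $K$ to be scalar matrices that cancel. The only minor quibble is your closing parenthetical calling $T[i,i]$ a ``leading principal submatrix,'' which it is not in the paper's terminology (Definition~4.2); your earlier, correct identification of $T[i,i]=(T_{0i})$ as the relevant $1\times 1$ submatrix from Definition~4.4 is what you should cite there.
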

\begin{proof}
First, we show that each equivalence class in $\mathcal{T}_d(\mathbb{F)}$ contains at least one nice element.
Choose $T\in \mathcal{T}_d(\mathbb{F})$. By Definition 4.14, we have $T_{0i}\ne 0$ and $T_{ii}\ne 0$ for $0\le i\le d$. Define diagonal matrices $H, K \in {\rm Mat}_{d+1}(\mathbb{F})$ such that $H_{ii}=T_{0i}/T_{ii}$ and $K_{ii}=1/T_{0i}$ for $0\le i\le d$. By construction $H$ and $K$ are invertible. Let $T^{\prime}=HTK$. By construction, $T^\prime\in \mathcal{T}_d(\mathbb{F})$ and $T^\prime_{0i}=T^\prime_{ii}=1$ for $0\le i\le d$. By Definition 7.1, we have $T \sim T^\prime$. By Definition 7.5, $T^\prime$ is nice. Therefore each equivalence class in $\mathcal{T}_d(\mathbb{F)}$ contains at least one nice element.

Next, we show that this nice element is unique. Suppose that $T$, $T^\prime$ are nice elements in $\mathcal{T}_d(\mathbb{F)}$ and $T\sim T^\prime$. By Definition 7.1, there exist invertible diagonal matrices $H, K \in {\rm Mat}_{d+1}(\mathbb{F})$ that satisfy (7.1). For $0\le i\le d$, examining the $(0,i)$-entry in (7.1), we obtain $T^\prime_{0i}=H_{00}T_{0i}K_{ii}$. Examining the $(i,i)$-entry in (7.1), we obtain $T^\prime_{ii}=H_{ii}T_{ii}K_{ii}$. By Definition 7.5, we have $T_{0i}=T_{ii}=T^\prime_{0i}=T^\prime_{ii}=1$. By the above comments, we have $K_{ii}=1/H_{00}$ and $H_{ii}=H_{00}$ for $0\le i\le d$. Therefore $H=H_{00}I$ and $K=I/H_{00}$. Consequently $T=T^\prime$ by (7.1). We have shown that each equivalence class in $\mathcal{T}_d(\mathbb{F)}$ contains a unique nice element.
\end{proof}

\section{A commutative diagram}
Recall the set $\mathcal{T}_d(\mathbb{F})$ from Definition 4.14, the set $BA_d{(\mathbb{F}})$ from Definition 5.34, and the set $VF_{d}(\mathbb{F})$ from Definition 5.35. For the moment assume $d\ge 2$. In this section, we will describe how the sets $\mathcal{T}_d(\mathbb{F})$, $BA_d{(\mathbb{F}})$, $VF_{d}(\mathbb{F})$ and $VF_{d-2}(\mathbb{F})$ are related. In order to do this, we will establish a commutative diagram. As we proceed, some of our results do not require $d\ge 2$. So until further notice, assume $d\ge 0$. First we define a map $D: \mathcal{T}_d(\mathbb{F}) \to VF_{d}(\mathbb{F})$.
\begin{1def}
For $T\in \mathcal{T}_d(\mathbb{F})$, define a function $D(T): \bigtriangleup_{d} \to \mathbb{F}$ as follows. For each location $\lambda=(r,s,t)\in \bigtriangleup_{d}$, the image of $\lambda$ under $D(T)$ is $\det(T[\lambda])$, where $T[\lambda]$ is from Definition 6.5.
\end{1def}

\begin{lem}
With reference to Definition 8.1, $D(T)$ is a value function on $\bigtriangleup_{d}$ in the sense of Definition 5.32. In other words, $D(T)\in VF_{d}(\mathbb{F})$.
\end{lem}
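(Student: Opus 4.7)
The plan is to unwind the definitions and observe that the statement is essentially immediate from the \emph{very good} hypothesis on $T$. Concretely, to show $D(T)\in VF_d(\mathbb{F})$, I only need to verify, per Definition 5.32, that $D(T)$ takes values in $\mathbb{F}\setminus\{0\}$, i.e. that $\det(T[\lambda])\ne 0$ for every location $\lambda\in\bigtriangleup_d$.

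First I would fix an arbitrary $\lambda=(r,s,t)\in\bigtriangleup_d$ and check that $T[\lambda]$ is a genuine (non-empty) submatrix of $T$ in the sense of Definition 4.1. By Definition 6.5, $T[\lambda]=T[t,d-r]$. Since $r,s,t\in\mathbb{N}$ and $r+s+t=d$, we have $t\ge 0$, $d-r=s+t\ge t$, and $d-r\le d$. Thus $0\le t\le d-r\le d$, so $T[t,d-r]$ is indexed legitimately and in particular $T[\lambda]\ne\emptyset$.

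Next I would invoke Definition 4.14 together with Definition 4.4: since $T\in\mathcal{T}_d(\mathbb{F})$, the matrix $T$ is very good, meaning the submatrix $T[i,j]$ is invertible for all $0\le i\le j\le d$. Applying this with $(i,j)=(t,d-r)$ yields that $T[\lambda]$ is invertible, hence $\det(T[\lambda])\ne 0$. Therefore $D(T)(\lambda)\in\mathbb{F}\setminus\{0\}$ for every $\lambda\in\bigtriangleup_d$, which is precisely the condition in Definition 5.32 for $D(T)$ to be a value function on $\bigtriangleup_d$.

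There is no real obstacle here; the only thing to be careful about is the bookkeeping that the index pair $(t,d-r)$ coming from a location in $\bigtriangleup_d$ always satisfies $0\le t\le d-r\le d$ so that very-goodness of $T$ applies. Once that inequality chain is pointed out, the conclusion $D(T)\in VF_d(\mathbb{F})$ follows at once.
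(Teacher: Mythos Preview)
Your proof is correct and follows essentially the same approach as the paper: pick $\lambda=(r,s,t)\in\bigtriangleup_d$, identify $D(T)(\lambda)=\det(T[t,d-r])$, and note this is nonzero because $T$ is very good. You add the explicit verification that $0\le t\le d-r\le d$, which the paper leaves implicit, but otherwise the arguments coincide.
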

\begin{proof}
Pick $\lambda=(r,s,t)\in\bigtriangleup_{d}$. By construction $D(T)(\lambda)=\det(T[t,d-r])$, which is nonzero since $T$ is very good. We have shown that $D(T)(\lambda)\ne 0$ for all $\lambda\in\bigtriangleup_{d}$. Therefore $D(T)\in VF_{d}(\mathbb{F})$.
\end{proof}
\begin{1def}
We define a map $D: \mathcal{T}_d(\mathbb{F}) \to VF_{d}(\mathbb{F})$ as follows. For $T\in \mathcal{T}_d(\mathbb{F})$, the image of $T$ under $D$ is the function $D(T)$ from Definition 8.1. By Lemma 8.2 the map $D$ is well defined.
\end{1def}
For later use, we recall an elementary fact from linear algebra. Pick $T\in {\rm Mat}_{d+1}(\mathbb{F})$. For $0\le i,j\le d$, let $T^{(i,j)}$ denote the determinant of the $d \times d$ matrix that results from deleting the $i$-th row and the $j$-th column of $T$. Applying the Laplace expansion to the bottom row of $T$ we obtain
\begin{equation}
\det(T)=\sum_{j=0}^d (-1)^{d+j}T_{dj}T^{(d,j)}.
\end{equation}
\begin{lem}
The map $D: \mathcal{T}_d(\mathbb{F}) \to VF_{d}(\mathbb{F})$ from Definition 8.3 is bijective.
\end{lem}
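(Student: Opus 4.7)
The plan is to induct on $d$. The base case $d = 0$ is immediate: $\mathcal{T}_0(\mathbb{F})$ consists of the nonzero scalars $[a]$, and $D$ sends each such matrix to the value function whose single value is $a$, giving a bijection between $\mathbb{F}\setminus\{0\}$ and itself.

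For the inductive step I would decompose $\bigtriangleup_d$ as the disjoint union of the locations with $r \ge 1$ and the $1$-boundary $\{(0,s,t) : s+t = d\}$. The map $(r',s,t) \mapsto (r'+1,s,t)$ identifies $\bigtriangleup_{d-1}$ with the first piece; under this identification $T[\lambda]$ for $\lambda = (r'+1, s, t)$ equals $T'[t, (d-1)-r']$, where $T' = T[0, d-1]$ is the leading $d \times d$ principal submatrix of $T$. Since $T$ is very good, every submatrix $T[i,j]$ with $0 \le i \le j \le d-1$ is invertible, so $T'$ is itself very good. Therefore the restriction of $D(T)$ to the $r \ge 1$ part is, after the identification, equal to $D(T')$, and the inductive hypothesis recovers $T'$, equivalently all entries $T_{k,l}$ with $l \le d-1$.

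The remaining values $D(T)((0, s, t)) = \det(T[t, d])$ determine the last column of $T$. For $t = d$ the submatrix $T[d,d]$ is the single entry $[T_{0,d}]$, so $T_{0,d} = \det(T[d,d])$ outright. For $t \in \{d-1, d-2, \ldots, 0\}$ I would apply the Laplace expansion (8.1) to the bottom row of $T[t, d]$. In that expansion the coefficient of $T_{d-t, d}$ is the cofactor obtained by deleting the bottom row and rightmost column of $T[t, d]$, which equals $\det(T[t, d-1])$; this is nonzero because $T'$ is very good. Every other nonzero bottom-row entry is some $T_{d-t, l}$ with $l < d$, which lies in $T'$, and the associated cofactor is the determinant of a matrix whose entries are either in $T'$ or among the already-recovered column-$d$ entries $T_{0,d}, \ldots, T_{d-t-1, d}$. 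Rearranging the Laplace equation therefore uniquely solves for $T_{d-t, d}$. Iterating from $t = d$ down to $t = 0$ recovers the entire last column.

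This recursion gives injectivity of $D$. For surjectivity I would reverse it: given $\psi \in VF_d(\mathbb{F})$, the inductive hypothesis produces a very good $T' \in \mathcal{T}_{d-1}(\mathbb{F})$ realizing $\psi$ on the $r \ge 1$ part, and the same Laplace step constructs a last column extending $T'$ to a matrix $T$ satisfying $\det(T[t,d]) = \psi((0, d-t, t))$ for every $t$. The resulting $T$ is very good: its submatrices $T[i,j]$ with $j \le d-1$ coincide with those of $T'$ and are invertible by induction, while $\det(T[t,d]) = \psi((0, d-t, t)) \ne 0$ by the value-function hypothesis. The main obstacle is verifying the precise structure of (8.1): one must confirm that the coefficient of $T_{d-t, d}$ in the expansion is exactly $\det(T[t, d-1])$ and that every other cofactor in the sum depends only on data handled at earlier steps. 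This reduces to a careful index computation using the upper-triangular vanishing $T_{i,j} = 0$ for $i > j$, and although routine, it is the crucial technical point that makes the whole induction go through.
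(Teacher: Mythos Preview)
Your argument is correct. Both your proof and the paper's proof rest on the same core observation: in the Laplace expansion of $\det(T[j-i,j])$ along the bottom row, the coefficient of the bottom-right entry $T_{ij}$ is $\det(T[j-i,j-1])$, which is nonzero, so $T_{ij}$ is uniquely determined once the remaining entries appearing in that expansion are known.

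The difference is organizational. The paper works within a single matrix and inducts on $i+j$: at each step it solves for $T_{ij}$ directly, noting that every other entry occurring in the Laplace expansion of $\det(T[j-i,j])$ has strictly smaller index sum and is therefore already known. You instead induct on the dimension $d$: the leading principal submatrix $T'=T[0,d-1]$ is handled by the inductive hypothesis, and then the last column is filled in one entry at a time, from $T_{0,d}$ down to $T_{d,d}$. Unfolding your recursion amounts to processing the entries column by column (and within each column by increasing row), whereas the paper processes them along anti-diagonals. Your version has a pleasant recursive structure and makes the reduction to smaller $d$ explicit; the paper's version is a bit more economical since it avoids setting up the identification $\bigtriangleup_{d-1}\hookrightarrow\bigtriangleup_d$ and the separate treatment of the last column. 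One small cosmetic point: in your last-column step, the case $t=0$ is slightly special because the relevant bottom-row entries $T_{d,l}$ for $l<d$ lie outside $T'$; they are zero by upper-triangularity, so the argument still goes through, but your phrasing ``lies in $T'$'' does not literally cover that case.
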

\begin{proof}
For $f\in VF_{d}(\mathbb{F})$, we show that there exists a unique $T \in \mathcal{T}_d(\mathbb{F})$ such that $D(T)=f$. Our strategy is as follows. We are going to show that $T$ exists. As we proceed, our calculation will show that there is only one solution for $T$. For $0\le i,j\le d$, we solve for $T_{ij}$ by induction on $i+j$. First assume that $i+j=0$, so that $i=j=0$. Then $D(T)=f$ forces $T_{00}=f((d,0,0))$. Next assume that $i+j> 0$. If $i>j$, then $T_{ij}=0$ since $T$ is required to be upper triangular. If $i=0$, then $D(T)=f$ forces $T_{0j}=f((d-j,0,j))$. If $0<i\le j$, then $D(T)=f$ forces $\det(T[j-i,j])=f((d-j,i,j-i))$. Consider the matrix $T[j-i,j]$. The bottom right entry is $T_{ij}$. All the other entries have already been computed by induction. We now compute $\det(T[j-i,j])$ using the Laplace expansion to its bottom row. Applying (8.1) to $T[j-i,j]$, we obtain a formula for $\det(T[j-i,j])$. In this formula the coefficient of $T_{ij}$ is $\det(T[j-i,j-1])$, which is nonzero by assumption. Therefore there is a unique solution for $T_{ij}$. We have shown that the map $D$ is bijective.
\end{proof}
From now until the end of Theorem 8.6, assume that $d\ge 2$. Next we define a map $w: VF_{d}(\mathbb{F}) \to VF_{d-2}(\mathbb{F})$.

\begin{1def}
Assume $d\ge 2$. We define a map $w: VF_{d}(\mathbb{F}) \to VF_{d-2}(\mathbb{F})$ as follows. Given $f\in VF_{d}(\mathbb{F})$, we describe the image of $f$ under $w$. For a location $\tau=(r,s,t)\in \bigtriangleup_{d-2}$, the value of $w(f)(\tau)$ is
\begin{equation}
\frac{f(\mu+\alpha)f(\mu+\beta)f(\mu+\gamma)}{f(\mu-\alpha)f(\mu-\beta)f(\mu-\gamma)}.
\end{equation}
Here $\mu=(r+1,s,t+1)\in \bigtriangleup_{d}$ and $\alpha,\beta,\gamma$ are from (5.1). We interpret $f(\lambda)=1$ for all $\lambda\in \mathbb{Z}^3$ such that $\lambda \not\in \bigtriangleup_{d}$.
\end{1def}

\begin{thm}
Assume $d\ge 2$. Then the following diagram commutes:
\[
\xymatrix{
\mathcal{T}_d(\mathbb{F}) \ar[d]_{D} \ar[r]^b &BA_d(\mathbb{F})\ar[d]^\theta\\
VF_{d}(\mathbb{F}) \ar[r]_w & VF_{d-2}(\mathbb{F})}
\]
Here $\theta$ is from Definition 5.36, $b$ is from Definition 6.1, $D$ is from Definition 8.3, and $w$ is from Definition 8.5.
\end{thm}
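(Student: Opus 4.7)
The plan is to verify commutativity pointwise: for each $T \in \mathcal{T}_d(\mathbb{F})$ and each $\tau=(r,s,t) \in \bigtriangleup_{d-2}$, I will show that the scalars $\theta(b(T))(\tau)$ and $w(D(T))(\tau)$ coincide.

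To evaluate the left side, I would pick a representative $B \in b(T)$ constructed from $T$ as in Definition 6.1, using the three flags of (4.5)--(4.7). By Definitions 5.36 and 5.33, $\theta(b(T))(\tau)=\widehat{B}(\tau)$ is the $B$-value of the white 3-clique in $\bigtriangleup_d$ associated to $\tau$ via Lemma 5.12. At this point Theorem 6.6 has already done the real work, giving the closed-form expression
\[
\widehat{B}(\tau)=\frac{\det(T[\mu+\alpha])\det(T[\mu+\beta])\det(T[\mu+\gamma])}{\det(T[\mu-\alpha])\det(T[\mu-\beta])\det(T[\mu-\gamma])}
\]
with $\mu=(r+1,s,t+1)$ and $\det(\emptyset)=1$.

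To evaluate the right side, I would unfold Definitions 8.1, 8.3, and 8.5. By Definition 8.1, $D(T)(\lambda)=\det(T[\lambda])$ for $\lambda\in\bigtriangleup_d$; by Definition 8.5, $w(D(T))(\tau)$ is the ratio (8.2) with $f=D(T)$, under the convention $f(\lambda)=1$ for $\lambda\notin\bigtriangleup_d$. The one piece of bookkeeping is to verify that this convention matches the one used in Theorem 6.6: there, $T[\lambda]=\emptyset$ when $\lambda\notin\bigtriangleup_d$ (Definition 6.5), and $\det(\emptyset)=1$. With this compatibility in hand, the six factors in the ratio defining $w(D(T))(\tau)$ agree one-for-one with the six factors in the formula from Theorem 6.6, so the two sides are equal.

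The upshot is that Theorem 8.6 is essentially a translation of Theorem 6.6 into the value-function language of Section 8, and I expect no substantive obstacle beyond this routine convention check.
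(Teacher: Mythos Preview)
Your proposal is correct and follows exactly the approach of the paper, which simply says to chase $T$ around the diagram using Theorem~6.6. You have merely spelled out the routine details of that chase, including the convention check that $\det(\emptyset)=1$ in Theorem~6.6 matches $f(\lambda)=1$ for $\lambda\notin\bigtriangleup_d$ in Definition~8.5.
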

\begin{proof}
For $T\in \mathcal{T}_d(\mathbb{F})$ chase $T$ around the diagram using Theorem 6.6.
\end{proof}
Recall the equivalence relation $\sim$ on $\mathcal{T}_d(\mathbb{F})$ from Definition 7.1.
\begin{1def}
Via the bijection $D: \mathcal{T}_d(\mathbb{F}) \to VF_{d}(\mathbb{F})$, the equivalence relation $\sim$ on $\mathcal{T}_d(\mathbb{F})$ induces an equivalence relation on $VF_{d}(\mathbb{F})$, which we denote by $\approx$. In other words, for $T, T^\prime\in \mathcal{T}_d(\mathbb{F})$, $T\sim T^\prime$ if and only if $D(T)\approx D(T^\prime)$. For $f\in VF_d(\mathbb{F})$, let $[f]$ denote the equivalence class of $\approx$ that contains $f$. Let $\mathbb{VF}_d(\mathbb{F})$ denote the set of equivalence classes for $\approx$.
\end{1def}
We now give an alternative description of $\approx$.
\begin{lem}
For $f,f^\prime \in VF_{d}(\mathbb{F})$ the following are equivalent:
\begin{enumerate}
\item[\rm(i)] $f\approx f^\prime$ in the sense of Definition 8.7;
\item[\rm(ii)] there exist nonzero $h_i,k_i\in \mathbb{F}$ $(0\le i \le d)$ such that
\begin{equation}
f^\prime(\lambda)=f(\lambda)\prod_{i=0}^s(h_ik_{t+i})
\end{equation}
for $\lambda=(r,s,t)\in \bigtriangleup_{d}$.
\end{enumerate}
\end{lem}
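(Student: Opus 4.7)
The plan is to unfold the equivalence relation $\approx$ via the bijection $D \colon \mathcal{T}_d(\mathbb{F}) \to VF_d(\mathbb{F})$ from Lemma 8.4, and then track how the submatrix determinants $\det(T[\lambda])$ transform under the diagonal scaling $T \mapsto HTK$. Setting $T = D^{-1}(f)$ and $T' = D^{-1}(f')$, statement (i) becomes: $T' = HTK$ for some invertible diagonal matrices $H, K \in {\rm Mat}_{d+1}(\mathbb{F})$.

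For (i) $\Rightarrow$ (ii), I would fix such $H, K$, write $h_i := H_{ii}$ and $k_j := K_{jj}$, and compute $f'(\lambda) = \det(T'[t, d-r])$ for an arbitrary $\lambda = (r,s,t) \in \bigtriangleup_d$. Since $T'_{kl} = H_{kk} T_{kl} K_{ll}$, and since by Definition 4.1 the submatrix $T'[t, d-r]$ has rows indexed $0 \le k \le s$ and columns indexed $t \le l \le s+t$, I would pull the scalar $h_k$ out of row $k$ and $k_l$ out of column $l$, reducing the determinant to $\det(T[t, d-r]) \prod_{k=0}^s h_k \prod_{l=t}^{s+t} k_l$. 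Reindexing the second product by $l = t+i$ and combining the two products recovers exactly equation (8.3). For (ii) $\Rightarrow$ (i), given the product formula, I would define diagonal matrices $H, K$ by $H_{ii} = h_i$ and $K_{jj} = k_j$, which are invertible since every scalar is nonzero. Setting $T'' := HTK$, the same row/column scaling argument applied to each submatrix shows $T'' \in \mathcal{T}_d(\mathbb{F})$, and the forward computation yields $D(T'')(\lambda) = f(\lambda) \prod_{i=0}^s h_i k_{t+i} = f'(\lambda)$ for every $\lambda \in \bigtriangleup_d$. Hence $D(T'') = f' = D(T')$, and injectivity of $D$ gives $T' = T'' = HTK$, whence $T \sim T'$ and $f \approx f'$.

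There is no genuine obstacle; the argument is essentially bookkeeping anchored on Lemma 8.4 and the multilinearity of the determinant. The one thing requiring care is the index correspondence for $\lambda = (r,s,t)$, namely that $T[t, d-r]$ has rows $0,1,\dots,s$ and columns $t, t+1, \dots, s+t$, so that the row multipliers contribute $\prod_{i=0}^s h_i$ and the column multipliers contribute $\prod_{i=0}^s k_{t+i}$, matching the right-hand side of (8.3). Once this correspondence is recorded, both directions fall out of a single calculation.
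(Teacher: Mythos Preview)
Your proposal is correct and follows the same overall strategy as the paper: translate $\approx$ back to $\sim$ via the bijection $D$, and use multilinearity of the determinant to track how $\det(T[\lambda])$ scales under $T\mapsto HTK$. Your index bookkeeping for the rows $0,\dots,s$ and columns $t,\dots,s+t$ of $T[t,d-r]$ is exactly right.

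The one noteworthy difference is in the direction (ii)$\Rightarrow$(i). The paper defines $H,K$ from the scalars $h_i,k_i$ and then shows $T'=HTK$ \emph{entry by entry}, via induction on $i+j$ (specializing (8.3) to $\lambda=(d-j,i,j-i)$ and unwinding the Laplace expansion). You instead set $T''=HTK$, reuse the forward computation to get $D(T'')=f'=D(T')$, and invoke the injectivity of $D$ from Lemma~8.4 to conclude $T''=T'$. Your route is shorter and avoids the inductive unwinding, at the cost of relying on Lemma~8.4 (which itself is proved by a similar induction). Both arguments are valid; yours packages the work more efficiently by not repeating what is already encoded in the bijectivity of $D$.
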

\begin{proof}
By Lemma 8.4 there exist $T,T^\prime \in \mathcal{T}_d(\mathbb{F})$ such that  $D(T)=f$ and $D(T^\prime)=f^\prime$.

\rm(i)$\Rightarrow$\rm(ii) By Definition 8.7 we have $T\sim T^\prime$. So there exist invertible diagonal matrices $H, K \in {\rm Mat}_{d+1}(\mathbb{F})$ that satisfy (7.1). Define $h_i=H_{ii}$ and $k_i=K_{ii}$ for $0\le i \le d$. By construction $0\ne h_i,k_i\in \mathbb{F}$ for $0\le i\le d$, and $T^\prime_{ij}=h_ik_jT_{ij}$ for $0\le i,j\le d$. Pick $\lambda=(r,s,t)\in \bigtriangleup_{d}$. By Definition 8.1 and the above comments,
\begin{eqnarray*}
f^\prime(\lambda)&=&\det(T^\prime[t,d-r])\\
&=&h_0h_1\dots h_sk_tk_{t+1}\dots k_{s+t}\det(T[t,d-r])\\
&=&h_0h_1\dots h_sk_tk_{t+1}\dots k_{s+t}f(\lambda).
\end{eqnarray*}
Therefore (8.3) holds.

\rm(ii)$\Rightarrow$\rm(i) Define diagonal matrices $H, K \in {\rm Mat}_{d+1}(\mathbb{F})$ such that $H_{ii}=h_i$ and $K_{ii}=k_i$ for $0\le i\le d$. By construction $H,K$ are invertible. We will show that $T^\prime=HTK$. In order to do this, we show that $T_{ij}^\prime=h_ik_jT_{ij}$ for $0\le i,j\le d$. We proceed by induction on $i+j$. First assume that $i+j=0$, so that $i=j=0$. Applying (8.3) to $\lambda=(d,0,0)\in \bigtriangleup_{d}$ we obtain $T_{00}^\prime=h_0k_0T_{00}$. Next assume that $i+j>0$. If $i>j$, then $T_{ij}^\prime=T_{ij}=0$. Therefore $T_{ij}^\prime=h_ik_jT_{ij}$. If $i=0$, then by applying (8.3) to $\lambda=(d-j,0,j)\in \bigtriangleup_{d}$ we obtain $T_{0j}^\prime=h_0k_jT_{0j}$. If $0<i\le j$, then by applying (8.3) to $\lambda=(d-j,i,j-i)\in \bigtriangleup_{d}$ we obtain
\begin{equation}
\det(T^\prime[j-i,j])=\det(T[j-i,j])\prod_{l=0}^i(h_lk_{j-i+l}).
\end{equation}
By (8.4) and induction we routinely obtain $T_{ij}^\prime=h_ik_jT_{ij}$. We have shown that $T^\prime=HTK$. By Definition 7.1, $T\sim T^\prime$. By Definition 8.7, $f\approx f^\prime$.
\end{proof}
\begin{1def}
An element $f\in VF_{d}(\mathbb{F})$ is called {\it fine} whenever $f((i,d-i,0))=f((i,0,d-i))=1$ for $0\le i\le d$.
\end{1def}

\begin{lem}
For $T\in \mathcal{T}_d(\mathbb{F})$ the following are equivalent:
\begin{enumerate}
\item[\rm(i)] $T$ is nice in the sense of Definition 7.5;
\item[\rm(ii)] $D(T)$ is fine in the sense of Definition 8.9.
\end{enumerate}
\end{lem}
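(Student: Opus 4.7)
The plan is to unpack both sides using Definition~8.1, Definition~4.1, Definition~6.5 and the fact that $T$ is upper triangular, and observe that after this unpacking the two conditions coincide.

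First I would compute $D(T)$ at the locations appearing in Definition~8.9. For $\lambda=(i,0,d-i)\in\bigtriangleup_d$ we have $(r,s,t)=(i,0,d-i)$, so by Definition~6.5 the submatrix $T[\lambda]=T[d-i,d-i]$ is the $1\times 1$ matrix whose single entry is $T_{0,d-i}$. Hence
\begin{equation*}
D(T)\bigl((i,0,d-i)\bigr)=T_{0,d-i}\qquad (0\le i\le d).
\end{equation*}
Similarly, for $\lambda=(i,d-i,0)\in\bigtriangleup_d$ we have $T[\lambda]=T[0,d-i]$, which by Definition~4.1 is the $(d-i+1)\times(d-i+1)$ leading principal submatrix of $T$. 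By Lemma~4.6 this submatrix is upper triangular, so its determinant is the product of diagonal entries:
\begin{equation*}
D(T)\bigl((i,d-i,0)\bigr)=\prod_{k=0}^{d-i}T_{kk}\qquad (0\le i\le d).
\end{equation*}

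Next I would combine these formulas with Definition~8.9. The condition that $D(T)$ is fine becomes
\begin{equation*}
T_{0,d-i}=1 \quad\text{and}\quad \prod_{k=0}^{d-i}T_{kk}=1 \qquad (0\le i\le d).
\end{equation*}
As $i$ runs over $\{0,\dots,d\}$, so does $d-i$, so the first family of equations is equivalent to $T_{0j}=1$ for $0\le j\le d$. For the second family, setting $i=d$ gives $T_{00}=1$, and then taking successive ratios (the ratio of the $i=j-1$ equation to the $i=j$ equation) yields $T_{jj}=1$ for $0\le j\le d$; conversely if all $T_{kk}=1$ then every such product equals $1$. Hence the fine condition on $D(T)$ is equivalent to $T_{0j}=T_{jj}=1$ for $0\le j\le d$, which is precisely niceness of $T$ by Definition~7.5.

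There is no real obstacle here: the proof is a direct unpacking of Definitions~4.1, 6.5, 7.5, 8.1 and 8.9, together with the elementary determinant computation for an upper triangular block. The only thing to be careful about is matching the indexing conventions of $T[i,j]$ with the location coordinates $(r,s,t)$, but Definition~6.5 makes this translation explicit.
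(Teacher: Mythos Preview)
Your proof is correct and follows essentially the same approach as the paper: you compute $D(T)$ at the two boundary families of locations (obtaining $T_{0,d-i}$ and $\prod_{k=0}^{d-i}T_{kk}$, which are exactly the paper's (8.5) and (8.6)) and then compare directly with Definitions~7.5 and~8.9. The only cosmetic point is that the ratio of the $i=j-1$ and $i=j$ product equations literally gives $T_{d-j+1,\,d-j+1}=1$ rather than $T_{jj}=1$, but as $j$ ranges this of course covers all diagonal entries, so the argument stands.
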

\begin{proof}
By Definition 8.1,
\begin{equation}
D(T)((i,d-i,0))=T_{00}T_{11}\dots T_{d-i,d-i}. \qquad (0\le i\le d)
\end{equation}
Similarly, by Definition 8.1,
\begin{equation}
D(T)((i,0,d-i))=T_{0,d-i}. \qquad (0\le i\le d)
\end{equation}

First assume that $T$ is nice. By Definition 7.5 along with (8.5) and (8.6), $D(T)((i,d-i,0))=D(T)((i,0,d-i))=1$ for $0\le i\le d$. Therefore $D(T)$ is fine in view of Definition 8.9.

Next assume that $D(T)$ is fine. By Definition 8.9 along with (8.5) and (8.6), $T_{0i}=T_{ii}=1$ for $0\le i\le d$. Therefore $T$ is nice in view of Definition 7.5.
\end{proof}
\begin{cor}
Under the equivalence relation $\approx$ from Definition 8.7, each equivalence class contains a unique fine element in $VF_{d}(\mathbb{F})$.
\end{cor}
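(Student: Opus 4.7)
The plan is to transport Lemma 7.6 (existence and uniqueness of a nice representative in each $\sim$-class) through the bijection $D$ of Lemma 8.4, using Lemma 8.10 as the dictionary between ``nice'' and ``fine.'' Since $\approx$ was defined precisely so that $D$ becomes a bijection of equivalence classes $\mathbb{T}_d(\mathbb{F})\to \mathbb{VF}_d(\mathbb{F})$, this reduction is essentially automatic.

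For existence, I would start with an arbitrary equivalence class in $\mathbb{VF}_d(\mathbb{F})$ and pick a representative $f$. By Lemma 8.4 there is a unique $T\in \mathcal{T}_d(\mathbb{F})$ with $D(T)=f$. Applying Lemma 7.6 to the $\sim$-class of $T$, I obtain a nice matrix $T'\in \mathcal{T}_d(\mathbb{F})$ with $T\sim T'$. By Lemma 8.10, $D(T')$ is fine, and by Definition 8.7, $D(T')\approx D(T)=f$, giving a fine element in the same $\approx$-class.

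For uniqueness, suppose $f_1,f_2\in VF_d(\mathbb{F})$ are both fine with $f_1\approx f_2$. Using the bijection $D$, write $f_i=D(T_i)$ for $T_i\in \mathcal{T}_d(\mathbb{F})$, $i=1,2$. By Lemma 8.10 each $T_i$ is nice, and by Definition 8.7 the relation $f_1\approx f_2$ yields $T_1\sim T_2$. The uniqueness clause of Lemma 7.6 then forces $T_1=T_2$, hence $f_1=f_2$.

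There is essentially no obstacle here: the content has already been set up, and the proof is just a two-line diagram chase through $D$ and Lemma 8.10. The only care needed is to invoke Definition 8.7 in both directions (once to pass $T\sim T'$ to $D(T)\approx D(T')$ and once to pass $f_1\approx f_2$ back to $T_1\sim T_2$), which is built into that definition's ``if and only if.''
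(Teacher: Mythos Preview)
Your proof is correct and matches the paper's approach exactly: the paper's proof is simply ``By Lemma 7.6 and Lemma 8.10,'' which is precisely the transport-through-$D$ argument you spelled out in detail.
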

\begin{proof}
By Lemma 7.6 and Lemma 8.10.
\end{proof}

\begin{1def}
We define a map ${\bf D}:  \mathbb{T}_d(\mathbb{F}) \to \mathbb{VF}_d(\mathbb{F})$ as follows. Given an equivalence class $[T]\in \mathbb{T}_d(\mathbb{F})$, the image of $[T]$ under ${\bf D}$ is $[D(T)]$. By Definition 8.7 the map ${\bf D}$ is well-defined.
\end{1def}
\begin{lem}
The map ${\bf D}:  \mathbb{T}_d(\mathbb{F}) \to \mathbb{VF}_d(\mathbb{F})$ from Definition 8.12 is bijective.
\end{lem}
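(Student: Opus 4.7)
The plan is to deduce the bijectivity of $\mathbf{D}$ directly from two facts already established: that the underlying map $D : \mathcal{T}_d(\mathbb{F}) \to VF_d(\mathbb{F})$ is a bijection (Lemma 8.4), and that the equivalence relation $\approx$ on $VF_d(\mathbb{F})$ was defined in Definition 8.7 precisely as the pushforward of $\sim$ along $D$. In this situation, $\mathbf{D}$ is nothing more than the induced map on quotient sets of a bijection that respects the equivalence relations on both sides, so bijectivity is essentially automatic. I would split the verification into the standard two parts.

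For injectivity, I would take $[T], [T^\prime] \in \mathbb{T}_d(\mathbb{F})$ with $\mathbf{D}([T]) = \mathbf{D}([T^\prime])$. By Definition 8.12 this says $[D(T)] = [D(T^\prime)]$ in $\mathbb{VF}_d(\mathbb{F})$, that is, $D(T) \approx D(T^\prime)$. By the defining property of $\approx$ in Definition 8.7, this is equivalent to $T \sim T^\prime$, so $[T] = [T^\prime]$.

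For surjectivity, I would take an arbitrary class $[f] \in \mathbb{VF}_d(\mathbb{F})$ with representative $f \in VF_d(\mathbb{F})$. By Lemma 8.4 the map $D$ is surjective, so there exists $T \in \mathcal{T}_d(\mathbb{F})$ with $D(T) = f$. Then $\mathbf{D}([T]) = [D(T)] = [f]$, as required.

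There is no real obstacle here: the result is a formal consequence of the setup. The only point requiring a moment of care is checking that the direction ``$\mathbf{D}([T]) = \mathbf{D}([T^\prime]) \Rightarrow [T] = [T^\prime]$'' genuinely uses the \emph{if and only if} in Definition 8.7 (not merely the ``only if'' direction used to show $\mathbf{D}$ is well-defined in Definition 8.12). Once this is noted, the proof is a two-sentence argument.
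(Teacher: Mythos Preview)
Your proof is correct and follows exactly the same approach as the paper, which simply cites Lemma~8.4 and Definition~8.7. You have merely unpacked what that terse citation means: since $D$ is a bijection (Lemma~8.4) and $\approx$ is by construction the pushforward of $\sim$ along $D$ (Definition~8.7), the induced map $\mathbf{D}$ on quotient sets is automatically a bijection.
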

\begin{proof}
By Lemma 8.4 and Definition 8.7.
\end{proof}
For the rest of this section assume $d\ge 2$.
\begin{lem}
Assume $d\ge 2$. Recall the map $w$ from Definition 8.5. For $f,f^\prime \in VF_{d}(\mathbb{F})$, suppose that $f\approx f^\prime$ in the sense of Definition 8.7. Then $w(f)=w(f^\prime)$.
\end{lem}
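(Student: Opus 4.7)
The plan is to use Lemma~8.8 to replace the hypothesis $f\approx f'$ by an explicit multiplicative formula for the ratio $f'(\lambda)/f(\lambda)$, and then verify by a direct cancellation that the six such ratios appearing in the definition of $w$ combine to give $1$.

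In detail, by Lemma~8.8 there exist nonzero scalars $\{h_i\}_{i=0}^d$ and $\{k_i\}_{i=0}^d$ such that for every $\lambda=(r,s,t)\in\bigtriangleup_d$,
$$
\frac{f'(\lambda)}{f(\lambda)}=\prod_{i=0}^{s}h_i\;\prod_{i=0}^{s}k_{t+i}.
$$
Abbreviate the right side by $P(s,t)$, with the convention that a product over an empty index set equals $1$; then $P(-1,\ast)=1$. Fix $\tau=(r,s,t)\in\bigtriangleup_{d-2}$ and $\mu=(r+1,s,t+1)$. I would read off from the list below Definition~8.5 the last two coordinates of $\mu\pm\alpha,\mu\pm\beta,\mu\pm\gamma$ and note that in each case the ratio $f'/f$ at that point equals the corresponding $P(s',t')$. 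Substituting into the defining formula for $w$ gives
$$
\frac{w(f')(\tau)}{w(f)(\tau)}=\frac{P(s-1,t+1)\,P(s+1,t)\,P(s,t+2)}{P(s+1,t+1)\,P(s-1,t+2)\,P(s,t)}.
$$

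It then remains to check that this ratio is $1$. I would separate the $h$-factors and the $k$-factors. The $h$-part of the numerator uses the upper limits $\{s-1,s+1,s\}$ and of the denominator uses $\{s+1,s-1,s\}$; these are the same multiset, so $h$-factors cancel. For the $k$-part, each of the numerator and denominator is a product of $k_j$ for $j$ ranging over $\{t,t+1,\dots,t+s+2\}$, and an index-by-index tally shows that in both numerator and denominator the multiplicity of $k_j$ follows the pattern $(1,2,3,3,\dots,3,2,1)$ from $j=t$ to $j=t+s+2$. Hence the $k$-factors also cancel, and the ratio equals $1$.

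The main obstacle is not conceptual but rather careful bookkeeping of the index ranges, and in particular the boundary case $s=0$, where $\mu+\alpha$ and $\mu-\beta$ have a negative middle coordinate and therefore lie outside $\bigtriangleup_d$. The convention in Definition~8.5 that $f(\lambda)=1$ for such $\lambda$ applies equally to $f'$, so the ratio $f'/f$ is $1$ at those points; this is consistent with the empty-product value $P(-1,\ast)=1$, so the uniform formula above covers all cases without a case split.
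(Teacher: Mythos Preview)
Your proof is correct and follows exactly the approach the paper indicates: the paper's proof is the single sentence ``Apply (8.2) to $f'$ and evaluate the result using (8.3),'' and you have simply written out that evaluation and cancellation in detail via Lemma~8.8. One very small remark: your stated multiplicity pattern $(1,2,3,\dots,3,2,1)$ is the generic shape for $s\ge 2$ (it degenerates to $(1,2,2,1)$ for $s=1$ and $(1,1,1)$ for $s=0$), but since the numerator and denominator multisets agree in every case, the conclusion is unaffected.
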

\begin{proof}
Apply (8.2) to $f^\prime$ and evaluate the result using (8.3).
\end{proof}
\begin{1def}
Assume $d\ge 2$. We define a map ${\bf w}:  \mathbb{VF}_d(\mathbb{F}) \to VF_{d-2}(\mathbb{F})$ as follows. Given an equivalence class $[f]\in \mathbb{VF}_d(\mathbb{F})$, the image of $[f]$ under ${\bf w}$ is $w(f)$. By Lemma 8.14 the map ${\bf w}$ is well-defined.
\end{1def}

\begin{lem}
Assume $d\ge 2$. Then the map ${\bf w}:  \mathbb{VF}_d(\mathbb{F}) \to VF_{d-2}(\mathbb{F})$ from Definition 8.15 is bijective.
\end{lem}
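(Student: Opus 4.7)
The plan is to deduce bijectivity of $\mathbf{w}$ by passing the commutative diagram of Theorem~8.6 to equivalence classes, so that $\mathbf{w}$ becomes expressible as a composition of maps already known to be bijective.

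First I would establish that the diagram of Theorem~8.6 descends to equivalence classes in the form
\[
\theta \circ \mathbf{b} = \mathbf{w} \circ \mathbf{D}
\]
as maps $\mathbb{T}_d(\mathbb{F}) \to VF_{d-2}(\mathbb{F})$. This is just a matter of unwinding definitions: for $T \in \mathcal{T}_d(\mathbb{F})$, Definition~7.3 gives $\mathbf{b}([T]) = b(T)$, Definition~8.12 gives $\mathbf{D}([T]) = [D(T)]$, and Definition~8.15 gives $\mathbf{w}([D(T)]) = w(D(T))$; the commuting square of Theorem~8.6 then yields $\theta(\mathbf{b}([T])) = \theta(b(T)) = w(D(T)) = \mathbf{w}(\mathbf{D}([T]))$.

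Next I would invoke three previously established bijectivity results: $\mathbf{b}$ is bijective by Theorem~7.4, $\theta$ is bijective by Lemma~5.37, and $\mathbf{D}$ is bijective by Lemma~8.13. Solving the identity $\theta \circ \mathbf{b} = \mathbf{w} \circ \mathbf{D}$ for $\mathbf{w}$ gives
\[
\mathbf{w} = \theta \circ \mathbf{b} \circ \mathbf{D}^{-1},
\]
which is a composition of bijections and is therefore itself a bijection.

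I expect no real obstacle here, since all the hard work has been absorbed into the earlier lemmas; the only thing to be careful about is that the descent of the diagram to equivalence classes is literally built into the definitions of $\mathbf{b}$, $\mathbf{D}$, and $\mathbf{w}$, so that no further compatibility needs to be checked. The main conceptual point is simply that the horizontal maps in the Theorem~8.6 square, once we quotient the top-left and bottom-left corners by the appropriate equivalence relations, become the bijections $\mathbf{b}$ and $\mathbf{w}$ while the vertical map $\mathbf{D}$ is itself a bijection on the quotient sets.
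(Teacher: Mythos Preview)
Your argument is correct and non-circular: Theorem~8.6, Theorem~7.4, Lemma~5.37, and Lemma~8.13 are all established independently of Lemma~8.16, so you may legitimately combine them to conclude that $\mathbf{w} = \theta \circ \mathbf{b} \circ \mathbf{D}^{-1}$ is a bijection. What you have done is essentially to prove Theorem~8.17 first (its proof in the paper is just ``Use Theorem~8.6'') and then read off the bijectivity of $\mathbf{w}$ from the other three sides of the square.

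The paper takes a different, more direct route: it shows constructively that for each $g \in VF_{d-2}(\mathbb{F})$ there is a unique \emph{fine} $f \in VF_d(\mathbb{F})$ with $w(f) = g$, by solving for the values $f(\lambda)$ inductively on $s - r$ using the defining relation~(8.2); combined with Corollary~8.11 (each $\approx$-class contains a unique fine element), this yields bijectivity of $\mathbf{w}$. The paper's argument is self-contained at the level of value functions and produces an explicit inversion algorithm for $\mathbf{w}$, whereas your approach is shorter and more structural, making clear that Lemma~8.16 carries no new content beyond what is already encoded in the commutative square and the bijectivity of the other three maps. Either proof is acceptable; yours has the virtue of explaining \emph{why} the result should be expected, while the paper's has the virtue of giving a concrete recipe for the inverse.
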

\begin{proof}
We will show that for $g\in VF_{d-2}(\mathbb{F})$, there exists a unique fine $f\in VF_{d}(\mathbb{F})$ such that $w(f)=g$. Our strategy is as follows. We are going to show that $f$ exists. As we proceed, our calculation will show that there is only one solution for $f$. For $\lambda=(r,s,t)\in \bigtriangleup_{d}$, we solve for $f(\lambda)$ by induction on $s-r$. First assume that $s-r=-d$, so that $\lambda=(d,0,0)$. By Definition 8.9, $f(\lambda)=1$. Next assume that $s-r>-d$. If $s=0$ or $t=0$, then by Definition 8.9, $f(\lambda)=1$. If $s\ne 0$ and $t\ne 0$, then by Definition 8.5,
\begin{equation}
g(\tau)=\frac{f(\mu+\alpha)f(\mu+\beta)f(\mu+\gamma)}{f(\mu-\alpha)f(\mu-\beta)f(\mu-\gamma)},
\end{equation}
where $\tau=(r,s-1,t-1)\in \bigtriangleup_{d-2}$, $\mu=(r+1,s-1,t)\in \bigtriangleup_{d}$ and $\alpha,\beta,\gamma$ are from (5.1). In terms of $r,s,t$ the equation (8.7) becomes
\begin{equation}
g(r,s-1,t-1)=\frac{f(r+2,s-2,t)f(r+1,s,t-1)f(r,s-1,t+1)}{f(r,s,t)f(r+1,s-2,t+1)f(r+2,s-1,t-1)}.
\end{equation}
In the right hand side of (8.8), all the factors except for $f(\lambda)=f(r,s,t)$ have been determined by induction, and these factors are nonzero. Hence $f(\lambda)=f(r,s,t)$ is uniquely determined. We have shown that for $g\in VF_{d-2}(\mathbb{F})$, there exists a unique fine $f\in VF_{d}(\mathbb{F})$ such that $w(f)=g$. By this and Corollary 8.11, the map ${\bf w}$ is bijective.
\end{proof}
\begin{thm}
Assume $d\ge 2$. Then the following diagram commutes:
\[
\xymatrix{
\mathbb{T}_d(\mathbb{F}) \ar[d]_{{\bf D}} \ar[r]^{{\bf b}} &BA_d(\mathbb{F})\ar[d]^\theta\\
\mathbb{VF}_d(\mathbb{F}) \ar[r]_{{\bf w}} & VF_{d-2}(\mathbb{F})}
\]
Here $\theta$ is from Definition 5.36, ${\bf b}$ is from Definition 7.3, ${\bf D}$ is from Definition 8.12, and ${\bf w}$ is from Definition 8.15.
\end{thm}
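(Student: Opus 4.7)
The plan is to deduce the commutativity of the diagram in Theorem 8.17 directly from the commutativity of the analogous diagram in Theorem 8.6, together with the way the bold-faced maps $\mathbf{b}$, $\mathbf{D}$, $\mathbf{w}$ descend from their un-bolded counterparts $b$, $D$, $w$.

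Concretely, I would pick an arbitrary equivalence class $[T]\in \mathbb{T}_d(\mathbb{F})$ with representative $T\in \mathcal{T}_d(\mathbb{F})$, and chase it around the diagram. Going right then down: by Definition 7.3, $\mathbf{b}([T])=b(T)\in BA_d(\mathbb{F})$, so $\theta(\mathbf{b}([T]))=\theta(b(T))$. Going down then right: by Definition 8.12, $\mathbf{D}([T])=[D(T)]\in \mathbb{VF}_d(\mathbb{F})$, so by Definition 8.15, $\mathbf{w}(\mathbf{D}([T]))=w(D(T))$. Thus the desired equality
\[
\theta(\mathbf{b}([T]))=\mathbf{w}(\mathbf{D}([T]))
\]
reduces to $\theta(b(T))=w(D(T))$, which is precisely the statement of Theorem 8.6 applied to $T$. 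Since $[T]$ was arbitrary, the diagram commutes.

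There is no real obstacle here; the substance of the commutativity was already established in Theorem 8.6 via Theorem 6.6 (the computation of $B$-values in terms of determinants of submatrices of $T$). The present theorem is just the observation that this commutativity passes to the quotient by $\sim$ on the left column. Well-definedness of $\mathbf{b}$, $\mathbf{D}$, and $\mathbf{w}$ on equivalence classes was already verified in Lemma 7.2, Definition 8.7, and Lemma 8.14 respectively, so no further checking of representatives is needed. Thus the proof is a one-line appeal to Theorem 8.6 after unwinding the definitions of the bold-faced maps.
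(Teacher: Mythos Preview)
Your proposal is correct and follows essentially the same approach as the paper: the paper's own proof is simply ``Use Theorem 8.6,'' and your argument spells out precisely how the bold-faced maps descend from $b$, $D$, $w$ so that commutativity reduces to Theorem 8.6.
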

\begin{proof}
Use Theorem 8.6.
\end{proof}
\begin{rmk}
In the commutative diagram from Theorem 8.17, each map is bijective.
\end{rmk}

\section{an example}
In this section, we will give an example to illustrate Theorem 6.6. We first recall some notation. Fix $0\ne q\in \mathbb{F}$. For $n\in \mathbb{N}$ define
\begin{equation*}
[n]_q = \sum_{i=0}^{n-1}q^i, \qquad \qquad [n]_q^! = \prod_{i=1}^{n}[i]_q.
\end{equation*}
We interpret $[0]_q=0$ and $[0]_q^!=1$.\\
For $n,k\in \mathbb{Z}$ we define ${n \brack k}_q$ as follows. For $0\le k \le n$,
\begin{equation}
{n \brack k}_q =\frac{[n]_q^!}{[k]_q^![n-k]_q^!}.
\end{equation}
If $k<0$ or $k>n$, then for notational convenience define ${n \brack k}_q=0$.
\begin{rmk}
In the right-hand side of (9.1), for certain values of $q$ the denominator may be equal to $0$. However, by \cite[Theorem~6.1]{kac} ${n \brack k}_q$ is a polynomial in $q$ with integral coefficients. So (9.1) is well defined no matter which $q$ we have chosen.
\end{rmk}
We will use the following result.
\begin{lem}
\cite[Proposition~6.1]{kac} For $n,k\in \mathbb{N}$,
\begin{equation*}
{n+1\brack k}_q={n\brack k}_q+q^{n+1-k}{n\brack k-1}_q.
\end{equation*}
\end{lem}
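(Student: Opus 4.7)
The plan is to reduce the identity to an elementary statement about $q$-integers. First I would dispose of the boundary cases. When $k\le 0$ or $k>n+1$, the identity is immediate from the convention that ${n\brack k}_q=0$ for $k$ outside $\{0,\ldots,n\}$: if $k=0$ the term $q^{n+1-k}{n\brack k-1}_q$ vanishes, and if $k=n+1$ the term ${n\brack k}_q$ vanishes; in all other out-of-range cases both sides are zero. This leaves the substantive range $1\le k\le n$, where all three $q$-binomial coefficients are given by the closed formula (9.1).

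In that range I would clear denominators. Multiplying both sides by $[k]_q^![n+1-k]_q^!/[n]_q^!$ and using the factorial relations $[n+1-k]_q^!=[n+1-k]_q[n-k]_q^!$ and $[k]_q^!=[k]_q[k-1]_q^!$, the claim becomes equivalent to the single identity
\begin{equation*}
[n+1]_q = [n+1-k]_q + q^{n+1-k}[k]_q.
\end{equation*}
This in turn follows directly from the definition $[m]_q=1+q+\cdots+q^{m-1}$, by splitting a geometric sum at the index $n+1-k$:
\begin{equation*}
[n+1-k]_q + q^{n+1-k}[k]_q = \sum_{i=0}^{n-k}q^i + \sum_{i=n+1-k}^{n}q^i = \sum_{i=0}^{n}q^i = [n+1]_q.
\end{equation*}

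I do not foresee any real obstacle here. The only subtlety worth flagging is the one noted in Remark 9.1: for special $q\in\mathbb{F}$ the denominators in (9.1) may vanish, so strictly speaking the reduction above should first be viewed as an identity in $\mathbb{Q}(q)$. However, both sides of the original claim are polynomials in $q$ with integer coefficients (again by Remark 9.1), so the resulting polynomial identity then specializes to any $q\in\mathbb{F}$, including those for which $[i]_q=0$ for some $i\le n+1$.
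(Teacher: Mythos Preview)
Your argument is correct. The paper does not actually prove this lemma; it merely cites \cite[Proposition~6.1]{kac}, so there is no ``paper's proof'' to compare against. Your direct verification---reducing to the additive identity $[n+1]_q=[n+1-k]_q+q^{n+1-k}[k]_q$ via the factorial formula, and then splitting the geometric sum---is the standard elementary proof and is entirely sound. Your handling of the specialization issue (proving the identity in $\mathbb{Z}[q]$ first, then specializing) is also exactly the right way to address the caveat of Remark~9.1.

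One minor wording issue: you write ``When $k\le 0$ or $k>n+1$'' but then immediately treat the case $k=n+1$ in that same sentence; presumably you meant ``$k\le 0$ or $k\ge n+1$'' (and since $k\in\mathbb{N}$, only $k=0$, $k=n+1$, and $k>n+1$ are actually at stake). This is purely cosmetic and does not affect the argument.
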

For the rest of this section assume $d\ge 2$. Recall the set $\mathcal{T}_d(\mathbb{F})$ from Definition 4.14 and the map $b$ from Definition 6.1. We will display a matrix $T\in \mathcal{T}_d(\mathbb{F})$ which is nice in the sense of Definition 7.5. Then we show that for a Billiard Array $B\in b(T)$ the $B$-value of each white 3-clique in $\bigtriangleup_{d}$ is equal to $q^{-1}$.

Define an upper triangular matrix $T\in {\rm Mat}_{d+1}(\mathbb{F})$ as follows. For $0\le i\le j\le d$,
\begin{equation}
T_{ij}={j\brack i}_q.
\end{equation}
\begin{lem}
For $0\le i\le d$, $T_{0i}=T_{ii}=1$.
\end{lem}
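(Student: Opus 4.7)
The plan is a direct computation from the definition. By equation (9.2), $T_{0i} = {i \brack 0}_q$ and $T_{ii} = {i \brack i}_q$ for $0 \le i \le d$. So it suffices to evaluate these two $q$-binomial coefficients using the formula in (9.1).

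First, I would handle the case $i = 0$ separately since the definition involves $[0]_q^! = 1$. For $i = 0$, both $T_{00} = {0 \brack 0}_q$ reduce to $[0]_q^!/([0]_q^![0]_q^!) = 1/1 = 1$, using the convention $[0]_q^! = 1$ stated just before Remark 9.1.

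For $1 \le i \le d$, I would apply (9.1) directly: ${i \brack 0}_q = [i]_q^!/([0]_q^![i]_q^!) = [i]_q^!/[i]_q^! = 1$, and ${i \brack i}_q = [i]_q^!/([i]_q^![0]_q^!) = [i]_q^!/[i]_q^! = 1$. In both cases the factor $[0]_q^! = 1$ disappears and the numerator equals the nontrivial factorial in the denominator, giving the value $1$. There is no obstacle here; the result is essentially unpacking definitions, and Remark 9.1 already guarantees that these expressions are well defined regardless of $q$.
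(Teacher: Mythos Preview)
Your argument is correct and is essentially the same as the paper's: the paper simply cites (9.2), leaving the evaluation of ${i \brack 0}_q$ and ${i \brack i}_q$ implicit, whereas you unpack that computation explicitly.
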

\begin{proof}
By (9.2).
\end{proof}

\begin{lem}
For $1\le i\le d$ and $0\le j\le d-1$,
\begin{equation}
T_{i,j+1}-T_{ij}=q^{j-i+1}T_{i-1,j}.
\end{equation}
\end{lem}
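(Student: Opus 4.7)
The plan is to reduce the identity (9.3) to the $q$-Pascal recursion in Lemma 9.3, which gives the result essentially immediately once (9.2) is substituted.

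First I would rewrite the desired identity using the definition of the entries of $T$ from (9.2). The left-hand side becomes ${j+1 \brack i}_q - {j \brack i}_q$, and the right-hand side becomes $q^{j-i+1} {j \brack i-1}_q$. So (9.3) is equivalent to
\begin{equation*}
{j+1 \brack i}_q = {j \brack i}_q + q^{j+1-i} {j \brack i-1}_q.
\end{equation*}

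Next I would apply Lemma 9.3 with $n = j$ and $k = i$ (both nonnegative, so the hypotheses are satisfied). This yields exactly the displayed identity above, and rearranging gives (9.3).

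The only mild subtlety is that (9.2) is stated for $0 \le i \le j \le d$, while the claim covers all $1 \le i \le d$ and $0 \le j \le d-1$, so one must handle the cases $i = j+1$ and $i > j+1$ separately. But in these cases the convention ${n \brack k}_q = 0$ for $k > n$ makes both sides agree: for $i = j+1$ one gets $1 - 0 = q^{0} \cdot 1$, and for $i > j+1$ both sides are $0$. Hence no obstacle arises, and the proof is essentially a one-line invocation of Lemma 9.3.
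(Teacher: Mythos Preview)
Your proof is correct and follows exactly the paper's approach: substitute (9.2) and invoke the $q$-Pascal recursion. One small slip: the $q$-Pascal identity you cite is Lemma~9.2 in the paper, not Lemma~9.3; otherwise your argument (including the explicit handling of the boundary cases $i>j$, which the paper leaves implicit via the upper-triangularity of $T$ and the convention ${n \brack k}_q=0$ for $k>n$) is fine.
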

\begin{proof}
By (9.2) and Lemma 9.2.
\end{proof}
For later use, we recall an elementary fact from linear algebra. Pick $A\in {\rm Mat}_{d+1}(\mathbb{F})$. For $0\le i\le d$, let $A_i$ denote the $i$-th column of $A$. Let $A^\prime$ denote the matrix in ${\rm Mat}_{d+1}(\mathbb{F})$ with $i$-th column $A_i-A_{i-1}$ for $1\le i\le d$ and $0$-th column $A_0$. Then
\begin{equation}
\det(A)=\det(A^\prime).
\end{equation}
For $0\le i \le j \le d$, recall the submatrix $T[i,j]$ from Definition 4.1. We now compute $\det(T[i,j])$.
\begin{lem}
For $0\le i<j\le d$,
\begin{equation*}
\det(T[i,j])=q^{i(j-i)}\det(T[i,j-1]).
\end{equation*}
\end{lem}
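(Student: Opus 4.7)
The plan is to apply column operations to $T[i,j]$ that preserve its determinant while producing a matrix whose top row is $(1,0,\dots,0)$, then expand along that row and recognize the resulting minor as a scalar multiple of $T[i,j-1]$. Specifically, starting from $T[i,j]$, I will replace column $l$ by (column $l$)$-$(column $l-1$) for $l=i+1,\dots,j$. By the column-subtraction rule recalled in (9.4), this preserves the determinant. By Lemma 9.4 (applied with its indices shifted), the new $(k,l)$-entry, for $k\ge 1$ and $l>i$, equals $q^{l-k}T_{k-1,l-1}$; by Lemma 9.3 the new $(0,l)$-entry is $T_{0,l}-T_{0,l-1}=1-1=0$ for $l>i$, while the entry at $(0,i)$ remains $T_{0,i}=1$.

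Hence the transformed matrix has top row $(1,0,0,\dots,0)$, so Laplace expansion along row $0$ reduces the computation to the determinant of the $(j-i)\times(j-i)$ submatrix obtained by deleting row $0$ and the column indexed by $l=i$. The entries of this submatrix are $q^{l-k}T_{k-1,l-1}$ for $k\in\{1,\dots,j-i\}$ and $l\in\{i+1,\dots,j\}$. Reindexing $k'=k-1$, $l'=l-1$ gives entry $q^{l'-k'}T_{k',l'}$ for $k'\in\{0,\dots,j-i-1\}$ and $l'\in\{i,\dots,j-1\}$.

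Next, factor $q^{l'}$ out of column $l'$ and $q^{-k'}$ out of row $k'$. Since $q^{l'-k'}=q^{l'}\cdot q^{-k'}$, what remains after factoring is the matrix with entries $T_{k',l'}$ for $0\le k'\le j-i-1$ and $i\le l'\le j-1$, which is exactly $T[i,j-1]$ by Definition 4.1. The scalar pulled out is
\begin{equation*}
q^{\sum_{l'=i}^{j-1}l' - \sum_{k'=0}^{j-i-1}k'} = q^{\frac{(i+j-1)(j-i)}{2}-\frac{(j-i-1)(j-i)}{2}} = q^{i(j-i)}.
\end{equation*}
Combining these steps yields $\det(T[i,j]) = q^{i(j-i)}\det(T[i,j-1])$, as required.

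The main obstacle is the index bookkeeping: one must track carefully which rows and columns of $T[i,j]$ correspond to which indices of $T$, apply Lemma 9.4 with the correctly shifted indices, and verify that after row/column scaling the surviving matrix is literally $T[i,j-1]$ rather than some other submatrix. No step is individually hard, but all of them must agree for the exponent $i(j-i)$ to come out correctly.
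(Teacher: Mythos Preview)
Your proof is correct and follows essentially the same approach as the paper: apply the column-subtraction identity (9.4) to $A=T[i,j]$ and use the recurrence (9.3) (Lemma~9.4) together with Lemma~9.3. The paper's proof is a one-line pointer to (9.3) and (9.4); you have simply written out the details it leaves implicit, including the Laplace expansion along the top row and the row/column scaling that identifies the remaining minor with $q^{i(j-i)}\det(T[i,j-1])$.
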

\begin{proof}
Apply (9.4) to $A=T[i,j]$ and use (9.3).
\end{proof}
\begin{pro}
For $0\le i\le j\le d$,
\begin{equation*}
\det(T[i,j])=q^{i(j-i)(j-i+1)/2}.
\end{equation*}
\end{pro}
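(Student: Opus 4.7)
The plan is a straightforward induction on $j-i$, leveraging Lemma 9.5 as the main recursive tool and Lemma 9.3 to handle the base case.

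For the base case $j = i$, I would observe that by Definition 4.1 the submatrix $T[i,i]$ is the $1 \times 1$ matrix whose sole entry is $T_{0i}$, which equals $1$ by Lemma 9.3. Hence $\det(T[i,i]) = 1$, and this agrees with $q^{i(j-i)(j-i+1)/2} = q^{0} = 1$.

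For the inductive step, fix $i$ and assume the formula holds for the pair $(i, j-1)$ with $j > i$. Applying Lemma 9.5 and the induction hypothesis gives
\begin{equation*}
\det(T[i,j]) \;=\; q^{i(j-i)} \det(T[i,j-1]) \;=\; q^{i(j-i)} \cdot q^{i(j-1-i)(j-i)/2}.
\end{equation*}
Combining exponents, the total power of $q$ is
\begin{equation*}
i(j-i) + \tfrac{i(j-1-i)(j-i)}{2} \;=\; i(j-i)\left(1 + \tfrac{j-1-i}{2}\right) \;=\; \tfrac{i(j-i)(j-i+1)}{2},
\end{equation*}
which is the desired exponent.

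There is no serious obstacle: the substantive content was already packaged into Lemma 9.5 (via the column operation in (9.4) combined with the $q$-Pascal recurrence of Lemma 9.2), so the proof reduces to this one-line induction. The only minor care needed is to verify the algebraic identity in the exponent and to confirm that the base case formula agrees with the $1 \times 1$ submatrix $T[i,i]$ having entry $T_{0i} = 1$.
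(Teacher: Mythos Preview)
Your proof is correct and matches the paper's approach exactly: the paper's proof reads ``Use Lemma 9.5 and induction on $j-i$,'' and you have simply filled in the details of that induction, including the base case via Lemma 9.3 and the exponent arithmetic in the inductive step.
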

\begin{proof}
Use Lemma 9.5 and induction on $j-i$.
\end{proof}
We now restate Proposition 9.6 using the notation $T[\lambda]$ from Definition 6.5.
\begin{pro}
For $\lambda=(r,s,t)\in \bigtriangleup_{d}$,
\begin{equation}
\det(T[\lambda])=q^{ts(s+1)/2},
\end{equation}
where $T[\lambda]$ is from Definition 6.5.
\end{pro}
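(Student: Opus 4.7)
The plan is to observe that Proposition 9.7 is essentially a change of variables in Proposition 9.6. By Definition 6.5, $T[\lambda] = T[t, d-r]$, so the goal is simply to rewrite the formula $\det(T[i,j]) = q^{i(j-i)(j-i+1)/2}$ from Proposition 9.6 in terms of the coordinates $(r,s,t)$ of the location $\lambda$.

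First I would check that the hypothesis of Proposition 9.6 is met, namely that $0 \le t \le d-r \le d$. This follows from $\lambda = (r,s,t) \in \bigtriangleup_d$, since $r,s,t \in \mathbb{N}$ and $r + s + t = d$ give $t \ge 0$, $d - r = s + t \ge t$, and $d - r \le d$. Next I would apply Proposition 9.6 with $i = t$ and $j = d - r$ to obtain
\begin{equation*}
\det(T[\lambda]) = \det(T[t, d-r]) = q^{t(d-r-t)(d-r-t+1)/2}.
\end{equation*}
Finally, using $r + s + t = d$, I would substitute $d - r - t = s$ into the exponent, yielding $\det(T[\lambda]) = q^{ts(s+1)/2}$, which is (9.6).

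There is no real obstacle here; the argument is a direct translation between the two indexing conventions, and the only thing to verify carefully is the identity $d - r - t = s$ coming from $\lambda \in \bigtriangleup_d$.
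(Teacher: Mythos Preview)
Your proof is correct and follows exactly the paper's approach: the paper's proof consists of the single sentence ``Use Definition 6.5 and Proposition 9.6,'' and you have carried out precisely that computation with the substitution $i=t$, $j=d-r$, and $d-r-t=s$.
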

\begin{proof}
Use Definition 6.5 and Proposition 9.6.
\end{proof}

\begin{lem}
$T$ is very good in the sense of Definition 4.4. Moreover, $T\in \mathcal{T}_d(\mathbb{F})$.
\end{lem}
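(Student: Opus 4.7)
The plan is to read off the result directly from Proposition 9.6 and the definitions.

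First I would recall Definition 4.4: $T$ is very good precisely when the submatrix $T[i,j]$ is invertible for all $0\le i \le j\le d$. A square matrix over $\mathbb{F}$ is invertible if and only if its determinant is nonzero, so the task reduces to showing $\det(T[i,j]) \ne 0$ in that range. But Proposition 9.6 gives the explicit formula $\det(T[i,j]) = q^{i(j-i)(j-i+1)/2}$, and we are assuming $0 \ne q \in \mathbb{F}$ at the start of Section 9. Hence every such determinant is a nonzero power of $q$, and so every $T[i,j]$ is invertible. This shows $T$ is very good.

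For the second assertion, by Definition 4.14 we need $T$ to be a very good upper triangular element of ${\rm Mat}_{d+1}(\mathbb{F})$. The matrix $T$ was defined to be upper triangular in the paragraph introducing (9.2), and we just established that it is very good. Therefore $T \in \mathcal{T}_d(\mathbb{F})$.

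I do not anticipate any obstacle here; the entire content of the lemma is a one-line consequence of Proposition 9.6 together with $q \ne 0$, and the membership in $\mathcal{T}_d(\mathbb{F})$ is then immediate from Definition 4.14.
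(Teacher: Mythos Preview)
Your proposal is correct and follows essentially the same approach as the paper's own proof: both use Proposition~9.6 (together with $q\ne 0$) to conclude that every $\det(T[i,j])$ is nonzero, hence $T$ is very good by Definition~4.4, and then note that $T$ is upper triangular by construction so $T\in\mathcal{T}_d(\mathbb{F})$ by Definition~4.14.
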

\begin{proof}
By construction $T$ is upper triangular. By Proposition 9.6, $\det(T[i,j])\ne 0$ for $0\le i\le j\le d$. Therefore $T$ is very good by Definition 4.4. Consequently $T\in \mathcal{T}_d(\mathbb{F})$.
\end{proof}
\begin{lem}
$T$ is nice in the sense of Definition 7.5.
\end{lem}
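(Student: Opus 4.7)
The plan is to simply combine three ingredients that are already on the table. By Definition 7.5, being nice requires two things: membership in $\mathcal{T}_d(\mathbb{F})$, and the normalization $T_{0i}=T_{ii}=1$ for $0\le i\le d$.

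First I would invoke Lemma 9.8, which establishes that the matrix $T$ defined by $T_{ij}={j\brack i}_q$ lies in $\mathcal{T}_d(\mathbb{F})$; this follows from the determinant computation in Proposition 9.6, which shows that every submatrix $T[i,j]$ has nonzero determinant. Second, I would appeal to Lemma 9.3, which records that the $q$-binomial identities ${i\brack 0}_q=1$ and ${i\brack i}_q=1$ give exactly $T_{0i}=T_{ii}=1$ for all $0\le i\le d$.

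With both prerequisites verified, the conclusion is immediate from Definition 7.5. There is no real obstacle here, since all the computational work was done in Lemmas 9.3 and 9.8; the present lemma is simply the statement that these two facts together match the definition of nice.
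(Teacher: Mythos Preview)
Your proposal is correct and matches the paper's own proof exactly: the paper simply cites Lemmas 9.3 and 9.8, which are precisely the two ingredients you invoke to verify the membership and normalization conditions in Definition 7.5.
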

\begin{proof}
By Lemmas 9.3, 9.8.
\end{proof}
Since $T\in \mathcal{T}_d(\mathbb{F})$, we can apply the map $b$ to $T$. Recall from Definition 6.1 that $b(T)$ is an isomorphism class of Billiard Arrays. For a Billiard Array $B\in b(T)$, we compute the $B$-value of each white 3-clique in $\bigtriangleup_{d}$.
\begin{pro}
With the above notation, the $B$-value of each white 3-clique in $\bigtriangleup_{d}$ is equal to $q^{-1}$.
\end{pro}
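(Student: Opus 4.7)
The plan is to apply Theorem 6.6, which expresses the $B$-value of the white 3-clique corresponding to $\tau=(r,s,t)\in\bigtriangleup_{d-2}$ as the ratio
\[
\frac{\det(T[\mu+\alpha])\det(T[\mu+\beta])\det(T[\mu+\gamma])}{\det(T[\mu-\alpha])\det(T[\mu-\beta])\det(T[\mu-\gamma])},
\]
where $\mu=(r+1,s,t+1)$ and $\det(\emptyset)=1$. The six locations $\mu\pm\alpha,\mu\pm\beta,\mu\pm\gamma$ are listed explicitly just before Theorem 6.6. The strategy is simply to evaluate each of these six determinants using Proposition 9.7 and to verify that the resulting power of $q$ is $q^{-1}$.

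The main calculation uses Proposition 9.7: for $\lambda'=(r',s',t')\in\bigtriangleup_d$ we have $\det(T[\lambda'])=q^{t's'(s'+1)/2}$. Substituting the six coordinate triples of the hexagon around $\mu$, the $B$-value equals $q^E$ with
\[
2E=(t+1)(s-1)s+t(s+1)(s+2)+(t+2)s(s+1)-(t+1)(s+1)(s+2)-(t+2)(s-1)s-ts(s+1).
\]
Pairing the three terms that share a common factor $(s-1)s$, $(s+1)(s+2)$, and $s(s+1)$ respectively, this collapses to $-(s-1)s-(s+1)(s+2)+2s(s+1)=-2$, so $E=-1$ and the $B$-value is $q^{-1}$.

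The only subtlety is the boundary case $s=0$, where $\mu+\alpha=(r+2,-1,t+1)$ and $\mu-\beta=(r+1,-1,t+2)$ fall outside $\bigtriangleup_d$ and, by the convention in Theorem 6.6, contribute $\det(\emptyset)=1$. This is compatible with the formula of Proposition 9.7, because the exponent $t's'(s'+1)/2$ vanishes whenever $s'\in\{-1,0\}$, giving $q^0=1$. Thus the uniform substitution via Proposition 9.7 handles all locations at once, and the cancellation in $E$ carries through without change. The main obstacle is purely bookkeeping: verifying that the degree-$\ge 1$ terms in $s$ and $t$ cancel, which is routine.
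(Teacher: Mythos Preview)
Your proof is correct and follows exactly the approach of the paper: apply Theorem~6.6 and evaluate the six determinants via Proposition~9.7. The paper's proof is the single line ``Apply Theorem~6.6 to the Billiard Array $B$ and evaluate (6.5) using (9.5)''; your proposal simply unpacks that evaluation explicitly, and the pairing you use to collapse $2E$ to $-2$ is the natural one.
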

\begin{proof}
Apply Theorem 6.6 to the Billiard Array $B$ and evaluate (6.5) using (9.5).
\end{proof}
Combining Proposition 9.10 with Theorem 7.4 and Lemma 7.6, we obtain the following result.
\begin{cor}
$T$ is the unique nice matrix in $\mathcal{T}_d(\mathbb{F})$ such that for a Billiard Array $B\in b(T)$ the $B$-value of each white 3-clique in $\bigtriangleup_{d}$ is equal to $q^{-1}$.
\end{cor}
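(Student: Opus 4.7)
The plan is to treat this as a routine combination of three results already available: Proposition 9.10 supplies existence (the matrix $T$ defined by $T_{ij}={j\brack i}_q$ is nice by Lemma 9.9 and produces the constant $B$-value $q^{-1}$ by Proposition 9.10), and the remainder is pure uniqueness. So the real content is to show that if $T^\prime\in\mathcal{T}_d(\mathbb{F})$ is another nice matrix and $B^\prime\in b(T^\prime)$ has every white 3-clique $B$-value equal to $q^{-1}$, then $T^\prime=T$.

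First I would observe that the hypothesis on $T^\prime$ forces the value function $\widehat{B^\prime}\in VF_{d-2}(\mathbb{F})$ to be the constant function with value $q^{-1}$. By Proposition 9.10 the same holds for $\widehat{B}$ where $B\in b(T)$, so $\widehat{B}=\widehat{B^\prime}$, i.e.\ $\theta(B)=\theta(B^\prime)$ with $\theta$ as in Definition 5.36. By Lemma 5.37 the map $\theta$ is bijective, hence $B$ and $B^\prime$ belong to the same isomorphism class of Billiard Arrays, and so $b(T)=b(T^\prime)$ in $BA_d(\mathbb{F})$.

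Next I would pass this equality to the set of equivalence classes $\mathbb{T}_d(\mathbb{F})$: by Definition 7.3 the induced map $\mathbf{b}$ satisfies $\mathbf{b}([T])=b(T)=b(T^\prime)=\mathbf{b}([T^\prime])$, and by Theorem 7.4 the map $\mathbf{b}$ is bijective, so $[T]=[T^\prime]$, that is, $T\sim T^\prime$ in the sense of Definition 7.1. Finally, Lemma 7.6 guarantees that each $\sim$-equivalence class contains exactly one nice element, so the fact that both $T$ and $T^\prime$ are nice forces $T=T^\prime$.

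Since every step is a direct invocation of a previously established bijectivity result, there is no real obstacle to overcome; the only thing to be careful about is to articulate the hypothesis as the equality $\theta(B)=\theta(B^\prime)$ of value functions on $\bigtriangleup_{d-2}$ (the constant function $q^{-1}$), which is precisely the input that the bijectivity of $\theta$ consumes in order to feed the chain $\theta$--$\mathbf{b}$--nice representative.
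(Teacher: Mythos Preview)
Your proof is correct and follows essentially the same approach as the paper, which states only that the result follows by combining Proposition 9.10 with Theorem 7.4 and Lemma 7.6. You have simply made explicit the intermediate step (Lemma 5.37, the bijectivity of $\theta$) needed to pass from equal $B$-values to $b(T)=b(T^\prime)$, which the paper's one-line justification leaves implicit.
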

\begin{rmk}
Assume $q=1$. Then the expression ${n \brack k}_q$ from (9.1) becomes the usual binomial coefficient $\binom {n} {k}$. So for $0\le i\le j\le d$, $T_{ij}=\binom {j} {i}$ by (9.2) and $\det(T[i,j])=1$ by Proposition 9.6. By Proposition 9.10, for a Billiard Array $B\in b(T)$ the $B$-value of each white 3-clique in $\bigtriangleup_{d}$ is equal to $1$.
\end{rmk}

\section{Acknowledgement}
This paper was written while the author was a graduate student at the University of
Wisconsin-Madison. The author would like to thank his advisor, Paul Terwilliger, for offering
many valuable ideas and suggestions.

The author would also like to thank Kazumasa Nomura for giving this paper a close reading and offering many valuable suggestions.

\end{document}